\newcommand{\diam}{\mathrm{diam}}
\newcommand{\Mod}{\mathrm{Mod}}
\newcommand{\rhostar}{\rho^{*}}
\newcommand{\N}{{\mathbb N}}
\newcommand{\R}{{\mathbb R}}
\newcommand{\C}{{\mathbb C}}
\newcommand{\be}{\begin{enumerate}}
\newcommand{\ee}{\end{enumerate}}
\numberwithin{figure}{section}
\newtheorem{theorem}{Theorem}[section]
\newtheorem{lemma}[theorem]{Lemma}
\newtheorem{proposition}[theorem]{Proposition}
\newtheorem{corollary}[theorem]{Corollary}
\newtheorem{definition}[theorem]{Definition}
\newenvironment{remark}[1][Remark.]{\begin{trivlist}
\item[\hskip \labelsep \textsc{#1}]}{\end{trivlist}}
\title{On the conformal gauge of a compact metric space}
\author{Matias Carrasco Piaggio}
\email{matias@cmi.uni-mrs.fr}
\address{Laboratoire d'Analyse, Topologie et Probabilit\'es, Aix-Marseille Universit\'e, France.}
\begin{document}

\maketitle
\begin{abstract}
In this article we study the Ahlfors regular conformal gauge of a compact metric space $(X,d)$, and its conformal dimension $\dim_{AR}(X,d)$. Using a sequence of finite coverings of $(X,d)$, we construct distances in its Ahlfors regular conformal gauge of controlled Hausdorff dimension. We obtain in this way a combinatorial description, up to bi-Lipschitz homeomorphisms, of all the metrics in the gauge. We show how to compute $\dim_{AR}(X,d)$ using the critical exponent $Q_N$ associated to the combinatorial modulus. 
\end{abstract}
\begin{quote}
\footnotesize{\textsc{Keywords}: Ahlfors regular, conformal gauge, conformal dimension, combinatorial modulus, Gromov-hyperbolic.}
\end{quote}
\tableofcontents

\section{Introduction}
The subject of this article is the study of quasisymmetric deformations of a compact metric space. More precisely, let $(X,d)$ be a compact metric space, we are interested in its \emph{conformal gauge}:
\begin{equation*}\label{gauge}
\mathcal{J}\left(X,d\right):=\left\{\theta\text{ distance on }X: \theta\sim_{qs}d\right\},
\end{equation*}
where two distances in $X$, $d$ and $\theta$, are quasisymmetrically equivalent $d\sim_{qs}\theta$ if the identity map $id:(X,d)\to(X,\theta)$ is a quasisymmetric homeomorphism. Recall that a homeomorphism $h:(X, d)\to (Y, \theta)$ between two metric spaces is \emph{quasisymmetric} if there is an increasing homeomorphism $\eta: \R_+ \to \R_+$ ---called a distortion function--- such that:
$$\frac{\theta\left(h(x),h(z)\right)} {\theta\left(h(y),h(z)\right)}\leq\eta\left(\frac{d\left(x, z\right)} {d\left(y, z\right)}\right),$$
for all $x, y, z\in X$ with $y\neq z$. In other words, a homeomorphism is quasisymmetric if it distorts relative distances in a uniform and scale invariant fashion. This class of maps provides a natural substitute of quasiconformal homeomorphisms in the broader context of metric spaces. Their precise definition was given by Tukia and Väisälä in \cite{TV}. See \cite{He} for a detailed exposition of these notions.

For example, if $d$ is a distance in $X$, then $d^\epsilon$ is also a distance for all $\epsilon\in (0,1]$, and the identity map $id:(X, d)\to (X, d^\epsilon)$ is $\eta$-quasisymmetric with $\eta(t)=t^\epsilon$. In particular, $\dim_H (X, d^\epsilon) = \epsilon^{-1} \dim_H (X, d)$. Therefore, quasisymmetric homeomorphisms can distort the Hausdorff dimension of the space, and one can always find distances in the gauge of arbitrarily large dimension.

The conformal gauge encodes the quasisymmetric invariant properties of the space. A fundamental quasisymmetry numerical invariant is the conformal dimension; introduced by P.\,Pansu in \cite{Pa}. There are different related versions of this invariant, in this article we are concerned with the \emph{Ahlfors regular conformal dimension}, which is a variant introduced by M.\,Bourdon and H.\,Pajot in \cite{BP}.

A distance $\theta\in\mathcal{J}(X,d)$ is Ahlfors regular of dimension $\alpha>0$ ---AR for short--- if there exists a Radon measure $\mu$ on $X$  and a constant $K\geq 1$ such that:
\begin{equation*}\label{ar}
K^{-1}\leq\frac{\mu\left(B_r\right)}{r^\alpha}\leq K,
\end{equation*}
for any ball $B_r$ of radius $0<r\leq\diam_{\theta} X$. In that case, $\mu$ is comparable to the $\alpha$-dimensional Hausdorff measure and $\alpha=\dim_H(X,\theta)$ is the Hausdorff dimension of $(X,\theta)$. The collection of all AR distances in $\mathcal{J}(X,d)$ is the \emph{Ahlfors regular conformal gauge} of $(X,d)$, and is denoted by $\mathcal{J}_{AR}(X,d)$.

The AR conformal dimension measures the simplest representative of the gauge. It is defined by
\begin{equation*}\label{dimconf}
\dim_{AR}(X,d):=\inf\left\{\dim_H\left(X,\theta\right):\theta\in\mathcal{J}_{AR}(X,d)\right\}.
\end{equation*}
We write $\dim_{AR} X$ when there is no ambiguity on the metric $d$. Note that we always have the estimate $\dim_TX \leq \dim_{AR} X$, where $\dim_TX$ denotes the topological dimension of $X$. Apart from this, the AR conformal dimension is generally difficult to estimate. However, it was computed by P.\,Pansu for the boundaries of homogeneous spaces of negative curvature \cite{Pa}. An exposition of the theory of conformal dimension, its variants and its applications can be found in \cite{LP}, \cite{B}, \cite{K}, \cite{H2} and \cite{MT}.

The interest, in studying quasisymmetric invariants, comes from the strong relationship between the geometric properties of a Gromov-hyperbolic space and the analytical properties of its boundary at infinity. Quasi-isometries between hyperbolic spaces induce quasisymmetric homeomorphisms between their boundaries, so any quasisymmetric invariant gives a quasi-isometric one.

For hyperbolic groups, the understanding of the canonical conformal gauge of the boundary at infinity ---induced by the visual metrics--- is an important step in the approach by Bonk and Kleiner to the characterization problem of uniform lattices of $\mathrm{PSL}_2(\C)$, via their boundaries ---Cannon's conjecture \cite{BK1}. They showed that Cannon's conjecture is equivalent to the following: if $G$ is a hyperbolic group, whose boundary is homeomorphic to the topological two-sphere $S^2$, then the Ahlfors regular conformal dimension of $\partial G$ is attained. Motivated by Sullivan's dictionary, Haïssinsky and Pilgrim translated these notions to the context of branched coverings \cite{HP1}. In particular, the AR conformal dimension characterizes rational maps between CXC branched coverings (see \cite{HP1}).

Discretization has proved to be a useful tool in the study of conformal analytical objects in metric spaces. Different versions of combinatorial modulus have been considered, by several authors, in connection with Cannon's conjecture (see \cite{Can,BK3,H1}). The combinatorial modulus is a discrete version of the analytical conformal modulus from complex analysis, but unlike the latter, is independent of any analytical framework. It is defined using coverings of $X$; therefore, it depends only on the combinatorics of such coverings. In \cite{BK}, the authors proved several important properties of combinatorial modulus for approximately self-similar sets. By defining a combinatorial modulus of a metric space $(X,d)$ that takes into account all the ``annuli'' of the space, with some fixed radius ratio, we extend to a more general setting some of these properties. 

The two main results of the present paper are Theorem \ref{description} and Theorem \ref{maintheo}. The first, gives a combinatorial description of the AR conformal gauge from an appropriate sequence of coverings of the space. The second, shows how to compute the AR conformal dimension using a critical exponent associated to the combinatorial modulus. To state the theorems we need some definitions.

Given an appropriate sequence of finite coverings $\{\mathcal{S}_n\}_n$ of $X$, with 
\begin{equation}\label{diamtozero}
||\mathcal{S}_n||:=\max\left\{\diam B:B\in \mathcal{S}_n\right\}\to 0,\text{ as }n\to +\infty,
\end{equation}
we adapt a construction of Elek, Bourdon and Pajot \cite{BP,Elek}, and construct a geodesic hyperbolic metric graph $Z_d$ with boundary at infinity homeomorphic to $X$ (see Section \ref{chapjauge} for precise definitions). With this identification the distance $d$ becomes a visual metric on $\partial Z_d$. The vertices of the graph $Z_d$ are the elements of $\mathcal{S}:=\bigcup_n\mathcal{S}_n$, and the edges are of two types: vertical or horizontal. The vertical edges form a connected rooted tree $T$ ---which is a spanning tree of $Z_d$--- and the horizontal ones describe the combinatorics of intersections of the elements of $\mathcal{S}$, i.e. two vertices $B$ and $B'$ in the same $\mathcal{S}_n$ are connected by an edge if $\lambda\cdot B\cap \lambda\cdot B'\neq\emptyset$, where $\lambda$ is a large enough universal constant. We remark that one of the assumptions involving the elements of $\mathcal{S}$ is that they are ``almost balls'' (see \eqref{almostball1},\eqref{almostball2}). In particular, it makes sense to write $\lambda\cdot B$, and to talk about the center of $B$, for an element $B\in\mathcal{S}$ (see Section \ref{chapjauge}). 

The vertical edges connect an element of $\mathcal{S}_n$ with an element of $\mathcal{S}_m$ for $|n-m|=1$. All the edges of $Z_d$ are isometric to the unit interval $[0,1]$. We denote by $w$ the root of $T$, and $B\sim B'$ means that $B$ and $B'$ are connected by a horizontal edge. For each $n\geq 0$, we denote by $G_n$ the subgraph of $Z_d$ consisting of all the vertices in $\mathcal{S}_n$ with all the horizontal edges of $Z_d$ connecting two of them.

Consider a function $\rho:\mathcal{S}\to (0,1)$. This function can be interpreted as an assignment of ``new relative radius'' of the elements of $\mathcal{S}$, or as an assignment of ``new lengths'' for the edges of $Z_d$. For each element $B\in\mathcal{S}$, there exists a unique geodesic segment in $Z_d$ which joins the base point $w$ and $B$; it consists of vertical edges and we denote it by $[w,B]$. The ``new radius'' of an element $B\in \mathcal{S}$ is expressed by the function $\pi:\mathcal{S}\to (0,1)$ given by
$$\pi(B):=\prod\rho(B'),$$
where the product is taken over all elements $B'\in \mathcal{S}\cap [w,B]$. Theorem \ref{description} says that from an appropriate function $\rho: \mathcal{S}\to (0,1)$ one can change the lengths of the edges of $Z_d$, and obtain a metric graph $Z_\rho$ quasi-isometric to $Z_d$. This graph admits a visual metric $\theta_\rho$, automatically in $\mathcal{J} _ {AR}(X, d)$, of controlled Hausdorff dimension. When $\rho$ goes through all the possible choices we get all the gauge $\mathcal{J}_ {AR}(X, d)$ up to bi-Lipschitz homeomorphisms.

To state the conditions on the function $\rho$ we need the following notation (see Section \ref{chapjauge}). For a path of edges in $Z_d$, $\gamma=\left\{(B_i, B_ {i +1})\right\}_{i=1}^{N-1}$ with $B_i\in \mathcal{S}$, we define the $\rho$-length by
$$L_\rho\left(\gamma\right)=\sum\limits_{i =1}^N\pi\left(B_i\right).$$
Let $\alpha>1$. For $x,y\in X$, by the assumption (\ref{diamtozero}), there exists a maximal level $m\in\N$ with the property that there exists an element $B\in \mathcal{S}_m$ with $x, y\in\alpha\cdot B$. We let
$$c_\alpha(x, y):=\left\{B\in S_m: x, y\in\alpha\cdot B\right\},$$ 
and we call it \emph{the center} of $x$ and $y$. We define $\pi\left(c_\alpha(x,y)\right)$ as the maximum of $\pi\left(B\right)$ for $B\in c_\alpha(x, y)$. We also define $\Gamma_n(x,y)$ as the family of paths in $Z_d$ that join two elements $B$ and $B'$ of $\mathcal{S}_n$, with $x\in B$ and $y\in B'$. We remark that the paths in $\Gamma_n(x,y)$ are not constrained to be contained in $G_n$. Finally, for an element $B\in \mathcal{S}_m$ and $n>m$, we denote by $D_n(B)$ the set of elements $B'$ in $\mathcal{S}_n$ such that the geodesic segment $[w,B']$ contains $B$.

The conditions to be imposed to the wight function $\rho$ are the following:
\begin{enumerate}
\item[(H1)] (Quasi-isometry) There exist $0<\eta_-\leq\eta_+<1$ so that $\eta_-\leq\rho(B)\leq\eta_+$ for all $B\in\mathcal{S}$.
\item[(H2)] (Gromov product) There exists a constant $K_0\geq 1$ such that for all $B,B'\in \mathcal{S}$ with $B\sim B'$, we have
$$\frac{\pi(B)}{\pi(B')}\leq K_0.$$
\item[(H3)] (Visual parameter) There exist $\alpha\in\left[2,\lambda/4\right]$ and a constant $K_1\geq 1$ such that for any pair of points $x,y\in X$, there exists $n_0\geq 1$ such that if $n\geq n_0$ and $\gamma$ is a path in $\Gamma_n(x,y)$, then
$$L_\rho\left(\gamma\right)\geq K_1^{-1} \cdot\pi\left(c_\alpha(x, y)\right).$$
\item[(H4)] (Ahlfors regularity) There exist $p>0$ and a constant $K_2\geq 1$ such that for all $B\in \mathcal{S}_m$ and $n>m$, we have
$$K_2^{-1}\cdot\pi(B)^p\leq\sum\limits_{B'\in D_{n}(B)}\pi\left(B'\right)^p\leq K_2\cdot\pi(B)^p.$$
\end{enumerate}
We obtain the following results.

\begin{theorem}[Combinatorial description of the gauge]\label{description} Let $(X,d)$ be a compact metric space such that $\mathcal{J}_{AR}(X,d)\neq\emptyset$. Suppose the function $\rho:\mathcal{S}\to(0,1)$ verifies the conditions (H1), (H2), (H3) and (H4). Then there exists a distance $\theta_\rho$ on $X$ quasisymmetrically equivalent to $d$ and Ahlfors regular of dimension $p$. Furthermore, the distortion function of $id: \left(X, d\right)\to\left(X,\theta_\rho\right)$ depends only on the constants $\eta_-,\eta_+, K_0$ and $K_1$, and $$\theta(x, y)\asymp\pi\left(c_\alpha(x, y)\right),$$ for all points $x,y\in X$. Conversely, any distance in the AR conformal gauge of $(X,d)$ is bi-Lipschitz equivalent to a distance built in that way.
\end{theorem}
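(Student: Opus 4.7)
The strategy is to realize $\theta_\rho$ as a visual metric on the boundary of a metric graph $Z_\rho$ obtained by re-weighting the edges of $Z_d$ according to $\rho$. Specifically, the vertical edge joining $B\in\mathcal{S}_n$ to its parent $B^-\in\mathcal{S}_{n-1}$ is rescaled to length $-\log\rho(B)$, while horizontal edges are given lengths chosen so that for every $B\in\mathcal{S}_n$ the distance $d_\rho(w,B)$ is comparable to $-\log\pi(B)$. Hypothesis (H1) guarantees that the new lengths remain bounded away from $0$ and $\infty$, hence the identity $Z_d\to Z_\rho$ is a bilipschitz map. In particular $Z_\rho$ is geodesic, Gromov-hyperbolic, and its boundary at infinity is canonically homeomorphic to $X$.

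The next step is to control the Gromov product $(x\mid y)_w$ in $Z_\rho$, for $x,y\in X\cong\partial Z_\rho$. Hypothesis (H3) yields the lower bound, since a discrete path realizing (up to bounded additive error) the distance between high-level vertices covering $x$ and $y$ projects to a combinatorial path in $\Gamma_n(x,y)$ whose $\rho$-length is comparable to the graph distance. The upper bound is obtained by exhibiting an explicit short chain: at the center level, one joins the cells containing $x$ and $y$ through $c_\alpha(x,y)$, and (H2) controls the horizontal cost. Together these give
\begin{equation*}
(x\mid y)_w \asymp -\log\pi(c_\alpha(x,y)).
\end{equation*}
The standard construction of a visual metric on the boundary of a hyperbolic space then produces a distance $\theta_\rho$ with $\theta_\rho(x,y)\asymp\pi(c_\alpha(x,y))$, the implicit constants depending only on $\eta_\pm, K_0$ and $K_1$. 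The equivalence $\theta_\rho\sim_{qs}d$ follows because $d$ is itself visual on $Z_d$, so the bilipschitz equivalence of $Z_d$ and $Z_\rho$ translates into a power-quasisymmetric change of visual metrics on the common boundary.

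Ahlfors regularity of $(X,\theta_\rho)$ with dimension $p$ is obtained by mass distribution. Assign to each $B\in\mathcal{S}_n$ the weight $\pi(B)^p$; by (H4) this assignment is consistent across levels, and a Carath\'eodory limit procedure produces a Radon measure $\mu$ on $X$ with $\mu(B)\asymp\pi(B)^p$. Given a $\theta_\rho$-ball of radius $r$, the formula $\theta_\rho\asymp\pi(c_\alpha(\cdot,\cdot))$ sandwiches it between a cell $B\in\mathcal{S}_n$ with $\pi(B)\asymp r$ and a uniformly bounded union of such cells, and (H4) converts this into the two-sided bound $\mu(B_{\theta_\rho}(x,r))\asymp r^p$.

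For the converse, fix $\theta\in\mathcal{J}_{AR}(X,d)$ of dimension $p$ and define
\begin{equation*}
\rho(B):=\frac{\diam_\theta B}{\diam_\theta B^-}
\end{equation*}
(with a suitable convention at the root $w$), so that $\pi(B)\asymp\diam_\theta B$. Quasisymmetric equivalence of $\theta$ and $d$, combined with the almost-ball property of $\mathcal{S}$, forces $\diam_\theta B\asymp\diam_\theta B'$ whenever $B\sim B'$, which yields (H1) and (H2). Ahlfors regularity of $\theta$ gives (H4), and (H3) reduces to the triangle inequality applied along any path in $\Gamma_n(x,y)$ together with the comparison $\pi\asymp\diam_\theta$. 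One then concludes from the chain $\theta_\rho(x,y)\asymp\pi(c_\alpha(x,y))\asymp\diam_\theta(c_\alpha(x,y))\asymp\theta(x,y)$ that $\theta_\rho$ and $\theta$ are bi-Lipschitz. The most delicate point, in both directions, is the sharp conversion between the combinatorial quantity $-\log\pi(c_\alpha(x,y))$ and the true Gromov product $(x\mid y)_w$ in $Z_\rho$, since the constants must survive uniformly the passage to the boundary; this is exactly what (H2) and (H3) were designed to guarantee.
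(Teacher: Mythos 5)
Your plan follows the paper's route at the structural level: re-weight $Z_d$ via $\rho$, establish the comparison $\theta_\rho\asymp\pi(c_\alpha(\cdot,\cdot))$, use mass distribution and (H4) for Ahlfors regularity, and extract $\rho$ from $\theta$-diameters (or equivalently from the measure of $\lambda\cdot B$) for the converse. However, there is a genuine gap in the passage from the Gromov-product estimate to the metric $\theta_\rho$. You write that after establishing $(x\mid y)_w\asymp-\log\pi(c_\alpha(x,y))$, the ``standard construction of a visual metric \dots produces a distance $\theta_\rho$ with $\theta_\rho(x,y)\asymp\pi(c_\alpha(x,y))$''. This is not what the standard construction gives: on the boundary of a $\delta$-hyperbolic space it only produces visual metrics of small parameter $\varepsilon_0\leq\varepsilon_0(\delta)$, hence a distance comparable to $\pi(c_\alpha(x,y))^{\varepsilon_0}$, which would give Ahlfors regularity of dimension $p/\varepsilon_0$, not $p$. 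Raising such a metric to the power $1/\varepsilon_0>1$ is not a metric in general, so one cannot undo the snowflaking by fiat.

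The role of (H3) is precisely to bridge this gap, and the correct organization (carried out in Proposition \ref{controlepsilon}) is to work directly with the length metric $d_1=\inf\ell_1$ on $Z_\rho$, which is automatically a metric since it is an infimum of lengths of curves. Hypothesis (H3), filtered through Lemma \ref{longarete}, gives the lower bound $d_1(x,y)\gtrsim\pi(c_\alpha(x,y))$ by ruling out short cuts in $Z_\rho$ beneath the level of $c_\alpha(x,y)$; the explicit chain through $c_\alpha(x,y)$ controlled by (H2) gives the matching upper bound. Only then is the visual-parameter-$1$ statement $d_1\asymp e^{-(x|y)_\rho}$ deduced, by comparing $d_1^{\varepsilon_0}$ with the known visual metric $d_{\varepsilon_0}$ for small $\varepsilon_0$. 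In the same vein, your description of (H3) as bounding a ``$\rho$-length comparable to the graph distance'' conflates $L_\rho(\gamma)=\sum\pi(B_i)$ (which is exponentially small) with the edge-length sum in $Z_\rho$ (which is roughly the number of edges); these are very different quantities and only the former is what (H3) controls. The remainder of your plan, including the mass-distribution argument for (H4) and the diameter-based converse, matches the paper, but the central step should be re-routed through the length metric $d_1$ rather than through the Gromov product followed by a generic visual-metric construction.
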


The terminology used in naming the hypotheses of the theorem will be explained in Section \ref{chapjauge}. For instance, the condition stated in the hypothesis (H1) serves to prove that $Z_d$, with the new distance induced by $\rho$, is quasi-isometric to $G$. The other hypotheses are interpreted in the same way. We remark that this approach, construction of Ahlfors regular distances using the combinatorial modulus, had already been considered by J. Cannon in \cite{Can}. 

This combinatorial description is particularly adapted to work with the combinatorial modulus. Let $B$ be a ball in $X$, for $n$ large enough we define $\Gamma_n(B)$ to be the set of paths $\gamma$ of $G_{n}$, with vertices $\{B_i\}_{i=1}^N$, such that the center of $B_1$ belongs to $B$ and that of $B_N$ belongs to $X\setminus 2\cdot B$ (see Section \ref{dimqas} for a precise definition). We consider the set $\mathcal{R}_n(B)$ of all admissible weight functions $\rho:\mathcal{S}_{n}\to\R_+$; i.e. $\forall\ \gamma\in \Gamma_n(B)$ we have 
\begin{equation*}\label{admissible}
\ell_\rho(\gamma):=\sum_{i=1}^N\rho(B_i)\geq 1.
\end{equation*}
Let $p>0$. We define the $p$-\emph{combinatorial modulus} associated to the ball $B\subset X$ at scale $n$ as 
\begin{equation*}
\Mod_p(B,n):=\inf\limits_{\rho\in \mathcal{R}_n(B)}\left\{\sum\limits_{B'\in \mathcal{S}_{n}}\rho(B')^p\right\}.
\end{equation*}
That is, one minimizes the $p$-volume among all the admissible weight functions. From this combinatorial modulus, defined for the annuli associated to the balls of $X$, we define in Section \ref{dimqas} a combinatorial modulus $M_{p,n}$ that takes into account all these annuli. We are interested in the asymptotic behavior of $M_{p,n}$ as $n$ tends to infinity, and its dependence on $p$. We set $M_p:=\liminf_nM_{p,n}$. For fixed $p>0$, the sequence $\{M_{p,n}\}_n$ verifies a sub-multiplicative inequality (see Lemma \ref{lme:smultiplicative}). This allows us to define the \emph{critical exponent} $Q_N:=\inf\{p>0:M_p=0\}$. 
\begin{theorem}\label{maintheo}
Let $(X,d)$ be a compact metric space such that $\mathcal{J}_{AR}(X,d)\neq\emptyset$. Then the AR conformal dimension of $(X,d)$ is equal to the critical exponent $Q_N$.
\end{theorem}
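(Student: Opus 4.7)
The plan is to prove the two inequalities $Q_N \leq \dim_{AR}(X,d)$ and $\dim_{AR}(X,d) \leq Q_N$ separately. The first is the analytic direction: an Ahlfors regular distance in the gauge yields admissible test functions for the combinatorial modulus, bounding $M_{p,n}$ from above. The second is the constructive direction: near-optimal admissible functions for $M_{p,n}$ with $p$ slightly larger than $Q_N$ are assembled into a weight $\rho:\mathcal{S}\to(0,1)$ satisfying the hypotheses (H1)--(H4), and Theorem \ref{description} then provides an AR distance of dimension $p$ in the gauge.

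For $Q_N \leq \dim_{AR}(X,d)$, I would take any $\theta \in \mathcal{J}_{AR}(X,d)$ of Hausdorff dimension $\alpha$ with associated AR measure $\mu$. For each ball $B\subset X$ and each large scale $n$, define the test function $\rho_n(B') := \diam_\theta(B')/\diam_\theta(B)$ for $B'\in \mathcal{S}_n$. Since $\theta\sim_{qs}d$ and the elements of $\mathcal{S}_n$ are almost balls of controlled relative size, any path of $\Gamma_n(B)$ must cross the $\theta$-annulus between $B$ and $X\setminus 2B$, so $\rho_n$ is, up to a universal constant, admissible. Ahlfors regularity of $\mu$ yields
$$\sum_{B'\in \mathcal{S}_n}\rho_n(B')^\alpha \lesssim \frac{\mu(3B)}{\diam_\theta(B)^\alpha}\asymp 1,$$
so $\Mod_\alpha(B,n)$ is uniformly bounded. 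Summing over a bounded-multiplicity family of balls and exploiting the additional decay factor $\diam_\theta(B')^{p-\alpha}\to 0$ when $p>\alpha$ forces $M_{p,n}\to 0$; hence $Q_N\leq \alpha$ for every $\theta$ in the gauge, and $Q_N\leq \dim_{AR}(X,d)$.

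For the converse, fix $p > Q_N$, so that $M_p = 0$. The sub-multiplicative inequality of Lemma \ref{lme:smultiplicative} then gives $M_{p,n} \to 0$ geometrically, and at each scale $n$ one can choose a near-optimal admissible $\rho_n:\mathcal{S}_n\to \R_+$ of $p$-volume comparable to $M_{p,n}$. The aim is to manufacture from the $\rho_n$'s a single weight $\rho:\mathcal{S}\to(0,1)$ whose scale function $\pi(B) = \prod_{B'\in [w,B]\cap\mathcal{S}}\rho(B')$ satisfies (H1)--(H4) with exponent $p$: condition (H1) is imposed by a uniform normalization; (H2) is extracted from a local regularization step ensuring that adjacent weights differ by a bounded factor, using that a rapidly oscillating $\rho_n$ could be truncated without increasing its $p$-volume; (H3) is a direct translation of admissibility of $\rho$ along chains of balls joining $x$ and $y$ through the annulus around $c_\alpha(x,y)$; and (H4) expresses the Ahlfors regularity of the measure to be built, and must be extracted from the near-criticality of $p$. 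Theorem \ref{description} then produces an AR distance $\theta_\rho$ with $\dim_H(X,\theta_\rho)=p$, whence $\dim_{AR}(X,d)\leq p$; letting $p\downarrow Q_N$ concludes.

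The main obstacle will be the simultaneous verification of (H1)--(H4) for a single $\rho$, and especially the Ahlfors regularity hypothesis (H4), whose lower bound $\pi(B)^p\lesssim \sum_{B'\in D_n(B)}\pi(B')^p$ does not follow from admissibility or near-minimality alone. I expect to need a bootstrap exploiting the sub-multiplicative inequality together with a self-improvement of the weight ---for example, iterating a smoothing procedure across scales, or symmetrizing over horizontal neighbours--- in order to force the multiplicative constants in (H1)--(H4) to be uniform in $n$, so that $\pi$ genuinely behaves like a scale function on the tree $T$ and Theorem \ref{description} can be applied.
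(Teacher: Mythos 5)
Your first inequality $Q_N\leq\dim_{AR}X$ follows essentially the same route as the paper (their Section 3.2): take an Ahlfors $q$-regular $\theta$ in the gauge, build a test weight of the form $\rho\asymp\diam_\theta B'/\diam_\theta B$ supported near $L\cdot B$, check admissibility and the $q$-volume bound via regularity of $\mu$, then use the extra decay factor $(\diam_\theta B')^{p-q}$ to send $M_{p,k}$ to zero. The only small omission is the cutoff (setting the weight to zero far from $L\cdot B$), without which the total $q$-volume need not be bounded.

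The converse is where the proposal falls short of a proof. You correctly identify that one wants to build a single $\rho:\mathcal{S}\to(0,1)$ satisfying (H1)--(H4) and invoke Theorem \ref{description}, and you correctly flag (H4) as the hard point; but you leave the construction to a vague ``bootstrap exploiting the sub-multiplicative inequality together with a self-improvement of the weight,'' and that is not a viable strategy. The paper bypasses the simultaneous verification of (H1)--(H4) entirely by inserting a reduction theorem (Theorem \ref{simplification}, proved via Proposition \ref{controldim} and Lemmas \ref{modificasion}--\ref{pasageconv}). The mechanism is: once you have a function $\sigma:\mathcal{S}\to\R_+$ that is merely admissible at each scale (S1) and has descendant $p$-volume uniformly $\leq\eta_0$ for a small threshold $\eta_0$ (S2), then (a) a Vol'berg--Koniagyn style propagation (Lemma \ref{modificasion}) repairs (H2) by raising the weight at vertices with incoming directed edges, using the smallness of $\eta_0$ to absorb the increase; (b) (H3) is \emph{not} a direct translation of admissibility --- the curve decomposition argument of Lemma \ref{mascorta} is needed to reduce arbitrary edge-paths in $Z_d$, of arbitrary level, to horizontal curves controlled by admissibility at a single level; and (c) the lower bound in (H4) is achieved not by a bootstrap on near-criticality but by an explicit renormalization: for each parent $B$, one designated ``center'' $C_B\in T_k(B)$ has its weight inflated by a factor $\omega_B\geq1$ so that $\sum_{B'\in T_k(B)}\rho(B')^p$ equals exactly $1$, forcing $K_2=1$. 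Near-criticality of $p$ enters only through the smallness of the moduli, hence of $\eta_0$, which guarantees there is room for all three modifications without violating (H1). You would need to supply all of this machinery; as written, the converse is a plan, not a proof.
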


Bruce Kleiner informed me in May 2009 that, inspired by the work of Keith and Laakso, he and Stephen Keith proved a similar (unpublished) result. In \cite{BK} it is stated and used in the self-similar case, see the Remark 1 after Corollary 3.13 therein. This was part of the motivation for working on these questions and they led me to a proof of Theorem \ref{maintheo} in this general setting.

We derive Theorem \ref{maintheo} from Theorem \ref{description}. The idea of the proof is the following: by definition, the combinatorial moduli $M_{p, n}$ tend to zero as $n$ tends to infinity for $p>Q_N$. Therefore, one can choose $n$ large enough, depending on the difference $p-Q_N$, so that $\Mod_p(B,n)$ is small for all the ``balls'' $B \in \mathcal{S}$. This gives some flexibility to change the optimal weight functions, so as to obtain a function $\rho: \mathcal{S} \to (0,1)$ which satisfies the conditions of the combinatorial description of the gauge given in Theorem \ref{description}. This gives an AR metric $\theta_\rho$ in $\mathcal{J}(X,d)$ of dimension $p$. The distortion of $ id: (X, d) \to (X, \theta_\rho)$ depends on $n$, and thus on the difference $p-Q_N$.

This result confirms that the combinatorics of the graph $Z_d$ contains all the information of the AR conformal gauge of $X$. It should be noted that it is true regardless of the topology of $X$, it just requires $\mathcal{J}_{AR}(X, d) $ to be non empty.

Let us discuss some important aspects of Theorem \ref{maintheo}. First, it relates the two \emph{a priori} different definitions of conformal dimension. The definition given here is due to Bourdon and Pajot \cite{BP}, and is better suited for analytical issues. Nevertheless, the original definition given in \cite{Pa} is closer to that of the critical exponent $Q_N$. For example, if $Z$ is a geodesic proper hyperbolic space, then $\beta_n:=\{\mho(x, R), x \in Z\setminus B_n\}$ ---where $B_n$ is the ball of radius $n$ centered at a base point $w\in Z$, and $\mho(x, R)$ is the shadow of the ball $B(x,R)$ projected from the point $w$--- defines a \emph{quasiconformal structure}, in the sense of Pansu, on $\partial Z$. Pansu associates a $p$-\emph{module grossier} to such a quasiconformal structure, and defines the conformal dimension as the infimum of $p> 1$ such that the $p$-module grossier of all ---non trivial--- connected subsets of $\partial Z$ is zero. From the theoretical point of view, Theorem \ref{maintheo} shows that these two approaches are actually equivalent. In relation to Pansu's definition, one advantage of the critical exponent $Q_N$ is its discrete nature, and the fact that $Q_N$ is computed only from the horizontal curves of $Z_d$.

Second, Theorem \ref{maintheo} enables to compute the AR conformal dimension when the combinatorics of the coverings $\mathcal{S}$ is not too complicated. Hopefully, this is the case in general when the space $X$ has good symmetry properties, such as the Sierpi\'nski carpet. Also, for this important fractal, the discrete nature of $Q_N$ could provide a numerical estimate of the AR conformal dimension. Theorem \ref{maintheo} also relates the AR conformal dimension to other quasisymmetric invariants, like the $\ell_p$-equivalence classes defined using the $\ell_p$-cohomology of the conformal gauge $\mathcal{J}(X,d)$ (see \cite{BouK}). In a forthcoming paper \cite{CaPi2} we give some applications of Theorem \ref{maintheo} to the boundary of hyperbolic groups.

The existence of curve families of positive analytical modulus is strongly related to the AR conformal dimension. This was already showed by J. Tyson \cite{Tys} who proved that if $(X,d)$ is AR of dimension $Q>1$ and admits a family of curves
of positive $Q$-analytical modulus, then $(X,d)$ attains its AR conformal dimension. Certainly more surprising, S. Keith and T. Laakso \cite{KL} showed that this condition is almost necessary in the following sense: if $(X,d)$ is AR of dimension $Q>1$ and if $\dim_{AR}(X,d)=Q$, then there exists a weak tangent space of $X$ that admits a family of curves of positive $Q$-analytical modulus (see Section \ref{sectangents} for a definition of tangent space of a metric space). We note the importance of this fact in the proof of the theorem of Bonk and Kleiner \cite{BK1}.

We show in Corollary \ref{kl} how Theorem \ref{maintheo} clarifies the reasons for the existence of curve families of positive analytical modulus when the AR conformal dimension is attained: this is a consequence of the sub-multiplicative inequality of the combinatorial modulus and the fact that the latter is bounded from above by the analytical modulus on weak tangent spaces.

The proofs of Theorem \ref{description} and Theorem \ref{maintheo} are rather technical and involve a careful study of the dependence of some constants on the parameters used in the constructions. For this reason, at risk of being repetitive, we include in the proofs detailed computations.

\bigskip
\textbf{Acknowledgments.} The author would like to thank Peter Haïssinsky for all his help and advice. He also thanks Marc Bourdon and Bruce Kleiner for comments and suggestions.

\subsection{Outline of the paper}
The paper is mainly divided into two parts. In Section \ref{chapjauge} we construct the graph $Z_d$ and we prove Theorem \ref{description}. In Section \ref{sec:controldim} we give sufficient conditions that will allow us to construct regular distances, of given dimension, in the conformal gauge (Proposition \ref{controldim}), and we simplify the hypothesis of Theorem \ref{description} to work with the combinatorial modulus (Proposition \ref{simplification}).

The purpose of Section \ref{secexpcrit} is to show how to compute the AR conformal dimension of a compact metric space using the combinatorial modulus. We define the combinatorial modulus associated to a sequence of graphs, the nerves of a sequence of coverings of $X$, and its critical exponent $Q_N$. In Section \ref{proofmain} we complete the proof that $Q_N$ is equal to the AR conformal dimension of $X$ (Theorem \ref{maintheo}).

In Section \ref{sectangents}, inspired by \cite{BK}, we show that the combinatorial modulus satisfies some kind of sub-multiplicative inequality giving the positiveness of the combinatorial modulus at the critical exponent (Corollary \ref{modpos}). We adapt to our situation arguments from \cite{KL} and \cite{H1} to bound from above the combinatorial modulus by the analytical moduli defined in the tangent spaces. Finally, these two facts with the equality $Q_N=\dim_{AR} X$ give a more conceptual proof of Keith and Laakso's theorem (Corollary \ref{kl}).

In Section \ref{secdifermods} we treat different definitions of combinatorial modulus. In Theorem \ref{egalite}, we give metric conditions on $X$ that allow us to compute its AR conformal dimension using another critical exponent $Q_X$, defined from ``genuine'' curves of $X$. In Corollary \ref{kk}, we give a proof of the result of Keith and Kleiner mentioned earlier (Remark 1 after Corollary 3.13 in \cite{BK}), i.e. when $X$ is approximately self-similar, it suffices to work with the modulus of curves with definite diameter. This allows us to give, in Corollary \ref{supcomposantesqas}, conditions under which the AR conformal dimension of $X$ is equal to the supremum of the AR conformal dimensions of its connected components. 

\subsection{Notations and some useful properties}

Two quantities $f(r)$ and $g(r)$ are said to be comparable, which will be denoted by $f(r)\asymp g(r)$, if there exists a constant $K$ which does not depends on $r$, such that $K^{-1} f(r)\leq g(r) \leq K f(r)$. If only the second inequality holds, we write $g(r)\lesssim f(r)$.
Similarly, we say that $f(r)$ and $g(r)$ differ by an additive constant, denoted by $g(r)=f(r) + O(1)$, if there exists a constant $K$ such that $|g(r)-f(r)|\leq K$. For a finite set $A$ we denote by $\#A$ its cardinal number.

A global distortion property of quasisymmetric homeomorphisms, which we will use repeatedly throughout this article, is the following (see \cite{He} Proposition 10.8):
let $h:X_1\to X_2$ be a $\eta$-quasisymmetric homeomorphism. If $A\subset B\subset X_1$ and $\diam_1 B <+\infty$, then $\diam_2 f(B) <+\infty$ and
\begin{equation}\label{distordiam}
\frac{1}{2} \eta\left(\frac{\diam_1 B} {\diam_1 A}\right)^ {-1} \leq\frac{\diam_2 h(A)} {\diam_2 h (B)} \leq \eta\left(\frac {2\diam_1 A}{\diam_1 B} \right).
\end {equation}

Let $(X,d)$ be a compact metric space. We say that $X$ is a \emph{doubling} space if there exists a constant $K_D\geq 1$ such that any ball of $X$ can be covered by at most $K_D$ balls of half the radius. This is equivalent to the existence of a function $K_D:(0,1/2)\to \R_+$ such that the cardinal number of any $\epsilon r$-separated subset contained in a ball of radius $r>0$, is bounded from above by $K_D(\epsilon)$. We recall that a subset $S$ of $X$ is called $\epsilon$-separated, where $\epsilon> 0$, if for any two different points $x$ and $y$ of $S$ we have $d(x, y)\geq \epsilon$.

We say that $X$ is \emph{uniformly perfect} if there exists a constant $K_P>1$ such that for any ball $B\left(x, r\right)$ of $X$, with $0<r\leq\diam X$, we have $B\left(x, r\right)\backslash B\left(x, K_P^{-1}r\right)\neq\emptyset$. This is equivalent to the fact that the diameter of any ball in $X$ is comparable to the radius of the ball. These two properties are quasisymmetric invariant and in fact we have: $(X,d)$ is doubling and uniformly perfect if and only if $\mathcal{J}_{AR}(X,d)\neq\emptyset$ (see \cite{He} Corollary 14.15).

Throughout the text, unless explicitly mentioned, $X$ denotes a compact, doubling and uniformly perfect metric space. We denote by $\dim_HX$ its Hausdorff dimension and by $\dim_TX$ its topological dimension. We reserve the letter $Z$ for a geodesic proper Gromov-hyperbolic metric space.

The distance between two subsets $A$ and $B$ of a metric space is denoted by
$$\mathrm{dist}(A,B):=\inf\{d(x,y):x\in A, y\in B\}.$$
We denote the ball centered at $x\in X$ and radius $r> 0$ by $B(x,r)=\{y\in X:d(x,y)<r\}$. The $r$-neighborhood $V_r(A)$ of $A$ is defined as the union of all balls centered at $A$ of radius $r$. The diameter of $A$ is denoted by $\diam A$. The Hausdorff distance between $A$ and $B$ is
$$\mathrm{dist}_H(A,B):=\min\{\partial (A,B),\partial (B,A)\},$$
where $\partial (A,B)=\inf\{r> 0: A\subset V_r(B)\}$.

We use in general the letters $A,B,C,\ldots$ to denote subsets of the space $X$ and the letters $x,y,z,\ldots$ to denote its points. The letters $K$, $L$ and $M$, eventually with indices, denote constants bigger or equal to $1$, and the letter $c$, eventually with an index, denotes a positive constant.

\section{Combinatorial description of the AR conformal gauge\label{chapjauge}}
\subsection{Hyperbolic structure of the snapshots of a compact metric space}\label{sec:graphe}

To construct distances in the gauge we use tools and techniques from hyperbolic geometry. This approach is based on the hyperbolicity of the \emph{snapshots} of a compact metric space. By the snapshots we mean the balls of the space, in the sense that a ball is a snapshot of the space at a certain point and at a certain scale. This terminology comes from S. Semmes \cite{Sem}.

We adapt a construction of Bourdon and Pajot, based on a nearby construction due to G. Elek (see \cite{BP} Section 2.1 and \cite{Elek}). This allows us to see the conformal gauge of a compact metric space $\left(X, d\right)$ as the canonical gauge of the boundary at infinity of a hyperbolic space (Proposition \ref{graphebp}). This hyperbolic space is a graph that reflects the combinatorics of the balls of $\left(X,d\right)$.

It is assumed in the following that $X$ is doubling and uniformly perfect. Let $\kappa\geq 1$, $a>1$ and $\lambda\geq 3$, the following constructions depend on these parameters. For $n\geq 1$, let $\mathcal{S}_n$ be a finite covering of $X$ such that for all $B\in \mathcal{S}_n$, there exists $x_B\in X$ with 
\begin{equation}\label{almostball1}
B\left(x_B,\kappa^{-1}r_n\right)\subset B\subset B\left(x_B,\kappa r_n\right),
\end{equation}
where $r_n:=a^{-n}$. We also suppose that for all $B\neq B'$ in $\mathcal{S}_n$, we have
\begin{equation}\label{almostball2}
B\left(x_B,\kappa^{-1}r_n\right)\cap B\left(x_{B'},\kappa^{-1}r_n\right)=\emptyset.
\end{equation}
We define $\mathcal{S}_0$ to be a one point subset of $X$, which we denote by $w:=\{x_0\}$, and represents the covering consisting of $X$ itself. We set $\mathcal{S}:=\bigcup_n\mathcal{S}_n$. Also denote $X_n$ the subset of $X$ consisting of the centers $x_B$, with $B\in \mathcal{S}_n$, defined in \ref{almostball1}. We write $|B|=n$ if $B\in \mathcal{S}_n$. For $\mu\in \R_+$ and $B\in\mathcal{S}_n$ we denote by $\mu\cdot B$ the ball centered at $x_B$ and of radius $\mu\kappa r_n$. 

We define a metric graph $G_d$ as follows. Its vertices are the elements of $\mathcal{S}$, and two distinct vertices $B$ and $B '$ are connected by an edge if
$$\left| |B|-|B'|\right|\leq 1\ \ \text{and}\ \ \lambda \cdot B \cap \lambda\cdot B\neq\emptyset.$$
We say that $B, B'\in\mathcal{S}_n$ are neighbors, and we write $B\sim B'$, if $B = B'$ or if they are connected by an edge of $G_d$. We equip $G_d$ with the length metric obtained by identifying each edge isometrically to the interval $[0,1]$; we denote this distance by $|B-B'|$. So $\mathcal{S}_n$ is the sphere of $G_d$ centered at $w$ and of radius $n$. See Figure \ref{fig:graphintervalle}.

\begin{figure}
\centering
\setlength{\unitlength}{1cm}
\begin{picture}(10,6.5)
\includegraphics[scale=0.6]{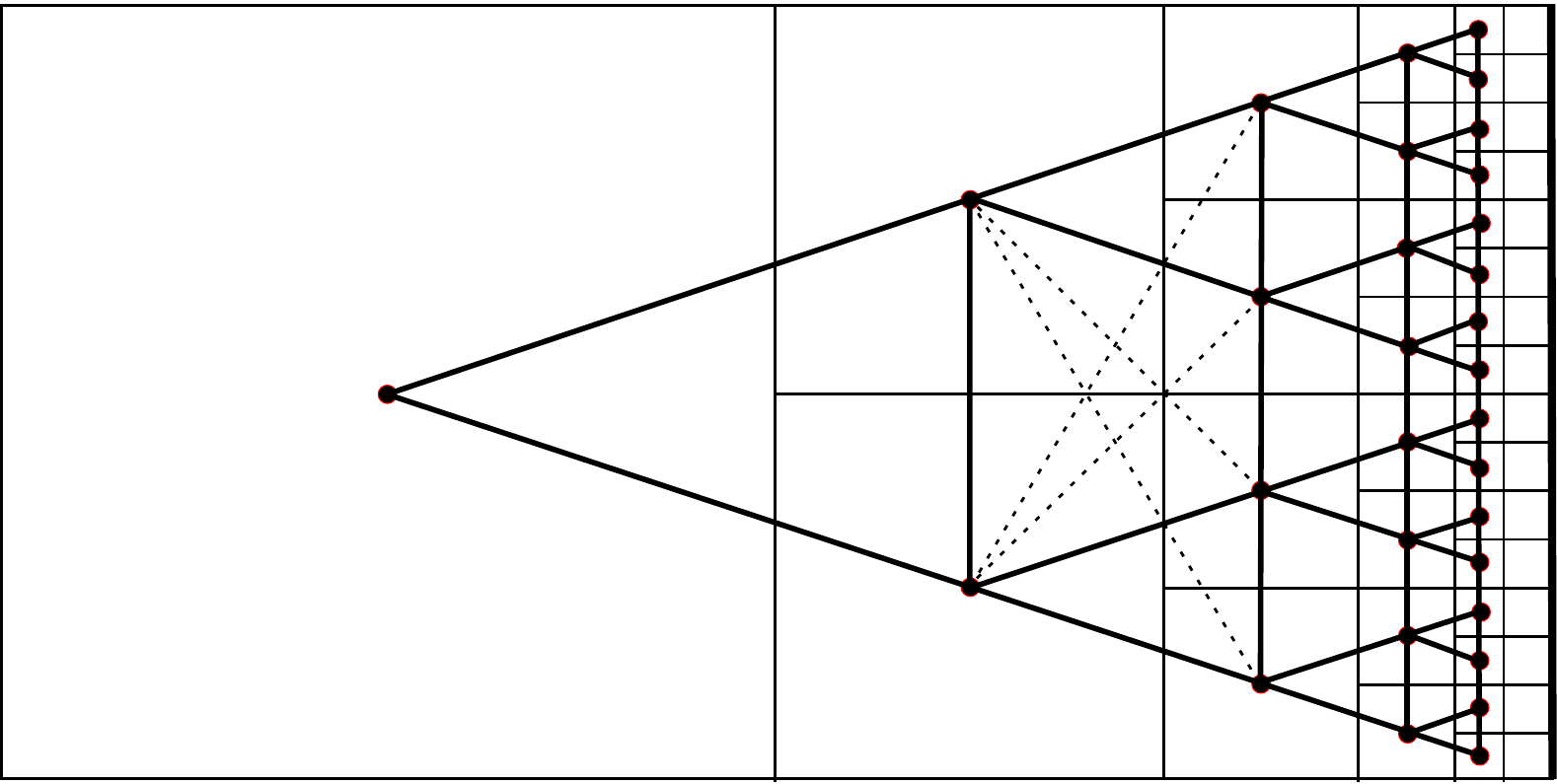}
\put(0.2,0){\footnotesize $0$}
\put(0.2,4.5){\footnotesize $1$}
\put(0.2,2.25){\footnotesize $X$}
\put(-7.6,2.25){\footnotesize $w$}
\put(-7.5,5){\footnotesize $S_0$}
\put(-3.7,5){\footnotesize $S_1$}
\put(-2,5){\footnotesize $S_2$}
\put(-1.1,5){\footnotesize $S_3\ldots$}
\end{picture}
\caption{\label{fig:graphintervalle}
Let $X$ be the interval $[0,1]$ in $\R$. We choose $a=2$ and $\lambda=3$. For each $n\geq 0$, let $X_n$ be the set of all mid points of the dyadic intervals $\left\{\left[\frac{j}{2^{n}}\frac{j+1}{2^n}\right]: j=0,\ldots,2^n\right\}$. Then $X_n $ is a maximal $2^{-n}$-separated set. The figure shows a sketch of the graph $G_d$. The edge length is equal to $1$ and the reader can see the hyperbolic nature of $G_d$.}
\end{figure}

Before proceeding, we recall some notions from the theory of Gromov-hyperbolic spaces. We refer to \cite{CDP} and \cite{GH} for a detailed exposition. Let $Z$ be a metric space, we say that $Z$ is \emph{proper} if all closed balls are compact. A \emph{geodesic} is an isometric embedding of an interval of $\R$ in $Z$. We say that $Z$ is a geodesic space if for any pair of points there exists a geodesic joining them. In general, we use the notation $|x-y|$ for the distance between points, when the space $Z$ is geodesic, proper and unbounded. We fix $w\in Z$ a base point and we denote $|x|:=|x-w|$ for $ x\in Z$. The Gromov product of two points $x,y\in Z$, seen from the base point $w$, is defined by
$$(x|y):=\frac{1}{2}\left(|x|+|y|-|x-y|\right).$$
We say that $Z$ is \emph{Gromov-hyperbolic} (with hyperbolicity constant $\delta\geq 0$) if
$$(x|y)\geq\min\left\{(x|z),(z|y)\right\}-\delta,$$
for all $x,y,z\in Z$. A \emph{ray from $w$} is a geodesic $\gamma: \R_+ \to Z$ such that $\gamma(0)=w$. Let $\mathcal{R}_\infty$ be the set of rays from $w$. The Gromov-boundary of $Z$, denoted by $\partial Z$, is defined as the quotient of $\mathcal{R}_\infty$ by the following equivalence relation: two rays $\gamma_1$ and $\gamma_2$ are said to be equivalent if $\mathrm{dist}_H(\gamma_1,\gamma_2)<+\infty$. The space $Z\cup\partial Z$ has a canonical topology so that it is a compactification of $Z$. This topology is in fact metrizable.

Let $\varepsilon>0$, denote by $\phi_\varepsilon:Z\to(0,+\infty)$ the application $\phi_\varepsilon(x)=\exp(-\varepsilon|x|)$. We define a new metric on $Z$ by setting
\begin{equation}\label{distepsilon}
d_\varepsilon(x,y)=\inf\limits_{\gamma}\ell_\varepsilon(\gamma), \text{ where }\ell_\varepsilon(\gamma) = \int_\gamma\phi_\varepsilon,
\end{equation}
and where the infimum is taken over all rectifiable curves $\gamma$ of $Z$ joining $x$ and $y$. The space $(Z, d_\varepsilon)$ is bounded and not complete. Let $\overline{Z}_\varepsilon$ be the completion of $(Z, d_\varepsilon)$ and denote $\partial_\varepsilon Z =\overline{Z}_\varepsilon\backslash Z$. When $Z$ is a Gromov-hyperbolic space, there exists $\varepsilon_0=\varepsilon_0(\delta)>0$ such that for all $0<\varepsilon\leq\varepsilon_0$, the space $\partial_\varepsilon Z$ coincides with the Gromov-boundary of $Z$ and $d_\varepsilon$ is a \emph{visual metric} of parameter $\varepsilon$. That is to say, we can extend the Gromov product to the boundary $\partial Z$, and for all $x, y\in \partial Z$ we have
\begin{equation}\label{defmvisuelle}
d_\varepsilon(x, y)\asymp \exp\left(-\varepsilon(x|y)\right).
\end{equation}

\begin{figure}
\centering
\setlength{\unitlength}{1cm}
\begin{picture}(6,6)
\includegraphics{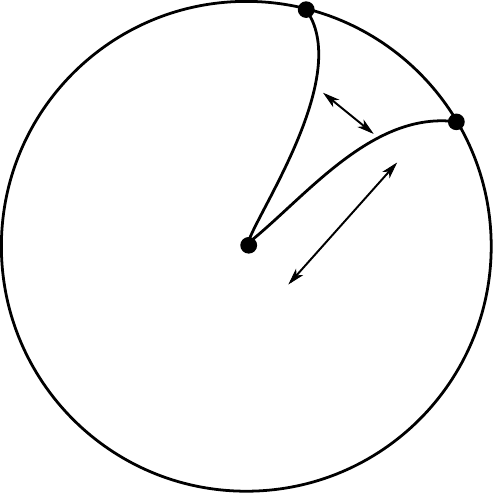}
\put(-2.6,2.1){\footnotesize $o$}
\put(-1,0.2){\footnotesize $\left(\partial Z,\{d_\epsilon\}\right)$}
\put(-2,1.5){\footnotesize $Z$}
\put(-1.5,2.3){\footnotesize $(x|y)$}
\put(-1.4,4){\footnotesize $\delta$}
\put(-1.7,5){\footnotesize $x$}
\put(-0.25,3.9){\footnotesize $y$}
\end{picture}
\caption{
\label{fig:produitgromov}
The canonical conformal gauge of the boundary of a Gromov-hyperbolic space.}
\end{figure}

We can interpret (\ref{defmvisuelle}) as follows: if $\gamma_1$ and $\gamma_2$ are two geodesic rays which represent the points $x$ and $y$ of $\partial Z$ respectively, then the Gromov product $(x|y)$ measures the length over which these two geodesics are at a distance comparable to $\delta$. So these two points of the boundary are close for the visual metric if the two geodesic rays are at a distance comparable to $\delta$, for a long period of time. Since visual metrics are always quasisymmetrically equivalent, they define a \emph{canonical conformal gauge} on the boundary $\partial Z$.

Since $X$ is doubling, the graph $G_d$ is of finite valence and hence it is a proper space. It is also geodesic, because it is a complete length space. The vertices of a ray $\gamma$ from $w$ determine a sequence of elements $B_n\in \mathcal{S}_n$ with $\lambda\cdot B_n\cap\lambda\cdot B_{n+1}\neq\emptyset$. Such a sequence has a unique limit point in $X$ denoted $\mathtt{p}(\gamma)$. If $\gamma_1$ and $\gamma_2$ are two rays at finite Hausdorff distance, then $\mathtt{p}\left(\gamma_1\right)=\mathtt{p}\left(\gamma_2\right)$. Also the map $\mathtt{p}:\mathcal{R}_\infty\to X$ is surjective, because $\left\{\mathcal{S}_n\right\}$ is a sequence of coverings. The following proposition, due to Bourdon and Pajot, allows us to use the tools of hyperbolic geometry to study the conformal gauge of $X$.

\begin{proposition}[\cite{BP} Proposition 2.1]\label{graphebp}
The metric graph $G_d$ is a Gromov-hyperbolic space. The map $\mathtt{p}$ induces a homeomorphism between $\partial G_d$ and $X$, and the metric $d$ of $X$ is a visual metric of visual parameter $\log a$. That is, for all $\xi,\eta\in\partial G_d$, we have
$$d\left(\mathtt{p}(\xi),\mathtt{p}(\eta)\right)\asymp a^{-(\xi|\eta)}.$$
In particular, with this identification, the conformal gauge of $X$ coincides with the canonical conformal gauge of $\partial G_d$.
\end{proposition}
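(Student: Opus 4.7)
The plan is to establish the three assertions in turn: hyperbolicity of $G_d$, the fact that $\mathtt{p}$ descends to a homeomorphism $\partial G_d \to X$, and the visual identity $d(\mathtt{p}(\xi),\mathtt{p}(\eta))\asymp a^{-(\xi|\eta)}$. Everything hinges on a single combinatorial distance estimate in $G_d$, so I would prove this estimate first and derive the three conclusions as consequences.

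The key lemma I would establish is the following. For $B\in\mathcal{S}_n$ and $B'\in\mathcal{S}_m$, let
$$k(B,B'):=\max\bigl\{\,j\le \min(n,m)\,:\,\exists\,C\in\mathcal{S}_j\text{ with }x_B,x_{B'}\in\lambda\cdot C\,\bigr\}.$$
Then $|B-B'|=n+m-2k(B,B')+O(1)$, with the additive constant depending only on $\kappa$, $\lambda$, $a$, and the doubling constant. The upper bound is obtained constructively: from $B$ walk up the ``ancestor'' chain to level $k:=k(B,B')$, cross horizontally in $\mathcal{S}_k$ by a path of bounded combinatorial length (whose length is controlled by the doubling property since $d(x_B,x_{B'})\lesssim a^{-k}$), then descend to $B'$; the vertical portions contribute $(n-k)+(m-k)$ edges and the horizontal portion $O(1)$. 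For the lower bound I would argue that along any path $B=B_0\sim B_1\sim\cdots\sim B_N=B'$ in $G_d$, consecutive centers satisfy $d(x_{B_i},x_{B_{i+1}})\le 2\lambda\kappa\,a^{-\min(|B_i|,|B_{i+1}|)}$, and combining this with (\ref{almostball2}) forces the path to reach some level at most $k(B,B')+O(1)$.

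From this lemma hyperbolicity of $G_d$ is automatic: one checks the four-point Gromov inequality by observing that for any three vertices $B_1,B_2,B_3$ the minimum of the three pairwise $k(B_i,B_j)$'s controls the Gromov products, with the defect absorbed into the $O(1)$ above. For the second assertion, surjectivity of $\mathtt{p}:\mathcal{R}_\infty\to X$ follows from (\ref{diamtozero}) applied to the coverings $\mathcal{S}_n$; two rays project to the same point iff they remain at uniformly bounded Hausdorff distance, because the estimate shows that vertices $B_n\in\gamma_1$ and $B_n'\in\gamma_2$ at level $n$ with $d(x_{B_n},x_{B_n'})\lesssim a^{-n}$ are at bounded graph distance (and conversely, bounded distance forces $k(B_n,B_n')=n+O(1)$, hence nearby centers). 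Continuity of the induced map on $\partial G_d$ and of its inverse is a routine consequence once the quasi-isometry between the two distance scales is in place.

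For the visual metric property I would take geodesic rays $\gamma_\xi,\gamma_\eta$ representing $\xi,\eta\in\partial G_d$, compute their Gromov product as
$$(\xi|\eta)=\liminf_{n\to\infty}\tfrac12\bigl(|\gamma_\xi(n)|+|\gamma_\eta(n)|-|\gamma_\xi(n)-\gamma_\eta(n)|\bigr),$$
and apply the lemma to conclude $(\xi|\eta)=k_\infty+O(1)$, where $k_\infty$ is the largest level at which both rays pass through a common neighbor in $\mathcal{S}_{k_\infty}$. Then $d(\mathtt{p}(\xi),\mathtt{p}(\eta))\asymp a^{-k_\infty}$: the upper bound follows because both limit points lie in the same ball of radius $\lesssim a^{-k_\infty}$, while the lower bound uses the separation (\ref{almostball2}) together with uniform perfectness to exclude that they be much closer. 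Combining gives $d(\mathtt{p}(\xi),\mathtt{p}(\eta))\asymp a^{-(\xi|\eta)}$, i.e.\ $d$ is visual of parameter $\log a$, and the final sentence about conformal gauges is then just a standard consequence since visual metrics for different parameters are quasisymmetrically equivalent.

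The main obstacle is the lower bound in the combinatorial distance lemma: one must rule out that clever ``zig-zag'' paths mixing horizontal and vertical edges can beat the straightforward up-over-down path. This is exactly where the hypothesis $\lambda\geq 3$ enters, to guarantee that the adjacency relation $\lambda\cdot B\cap \lambda\cdot B'\neq\emptyset$ is stable under passing to ancestors, so that any path from $B$ to $B'$ can be rerouted through their latest common ancestor level without loss in length beyond an additive constant.
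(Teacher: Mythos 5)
Your overall strategy --- establish the distance estimate $|B-B'|=n+m-2k(B,B')+O(1)$ (equivalently $a^{-(B|B')}\asymp\diam(B\cup B')$), then read off hyperbolicity, the boundary identification, and the visual-metric relation --- is indeed the one underlying the cited Bourdon--Pajot Proposition 2.1; note that the present paper does not reprove the statement but cites it, and the remark after it records exactly the two-sided comparison your key lemma amounts to. The up-over-down path for the upper bound, and the derivation of hyperbolicity, surjectivity and injectivity of $\mathtt{p}$, and the visual property from the distance estimate are all handled correctly in outline.

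The lower bound of the distance lemma, however, has a genuine gap as you have written it. You claim that the per-edge displacement bound $d(x_{B_i},x_{B_{i+1}})\le 2\lambda\kappa\,a^{-\min(|B_i|,|B_{i+1}|)}$, together with (\ref{almostball2}), ``forces the path to reach some level at most $k(B,B')+O(1)$'', and then propose to finish by rerouting through the latest common ancestor level. Neither step holds. A path is free to stay at high levels throughout (e.g.\ a purely horizontal path inside $\mathcal{S}_n$ when $X$ is connected); it is then merely long, not low. And an arbitrary path cannot be rerouted through the common ancestor level with only $O(1)$ extra length --- if it could, the estimate would be trivial; and for a geodesic, such a rerouting presupposes the very distance formula one is trying to prove. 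A correct argument is global and never tracks the minimum level: from $|B_0|=n$, $|B_N|=m$, $\big||B_{i+1}|-|B_i|\big|\le1$ one has $|B_i|\ge\max(n-i,\,m-N+i)$, hence
\begin{equation*}
\diam(B\cup B')\;\lesssim\;\sum_{i=0}^{N}a^{-|B_i|}\;\le\;\sum_{i=0}^{N}a^{-\max(n-i,\,m-N+i)}\;\lesssim\;a^{-(n+m-N)/2},
\end{equation*}
where the first inequality uses that $\lambda\cdot B_0,\dots,\lambda\cdot B_N$ is a chain of consecutively intersecting sets containing $B$ and $B'$. This geometric-series telescope is valid for every edge-path, and yields $N\ge n+m+2\log_a\diam(B\cup B')-O(1)\ge n+m-2k(B,B')-O(1)$ directly. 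Finally, (\ref{almostball2}) is not what underwrites this step; its role is to give $G_d$ finite valence, hence properness. The bound $\diam(B\cup B')\gtrsim a^{-k(B,B')}$ comes instead from the covering property of $\mathcal{S}_{k+1}$ together with uniform perfectness.
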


\begin{remark}
From the proof of the proposition, we know that the comparison constants depend on $\lambda$, $K_P$, $\kappa$ and $a$: indeed, for any pair of vertices $B$ and $B'$ of $G_d$, we have
$$\frac{1}{a^2K_P\kappa}\cdot a^ {-\left(B|B'\right)}\leq\diam\left(B\cup B'\right)\leq\frac{4\lambda \kappa a}{a-1}\cdot a^{-\left(B|B'\right)}.$$
This shows that the distortion of $\mathtt{p}$ tends to infinity when $a\to\infty$. Nevertheless, the hyperbolicity constant of $G_d$ is given by
$$\delta=\log_a\left(\frac{8\lambda K_P \kappa^2 a^3}{a-1}\right),$$ which remains bounded when $a\to \infty$.
\end{remark}

We start by simplifying the space $G_d$, by taking a subgraph with less vertical edges on which it will be easier to control the length of vertical curves, while remaining within the same quasi-isometry class. To do this, we need the notion of a \emph{genealogy} on $\mathcal{S}$. Let $\mathcal{V}:= \{\mathcal{V}_n\}_ {n \geq 0}$ be a sequence of ``almost partitions'' of $X$, defined by $\mathcal{V}_n:= \{V_n (B): B\in\mathcal{S}_n\}$, where for each $n\geq 0$ and $B\in \mathcal{S}_n$, $V_n(B)$ is the subset of $X$ defined by
\begin {equation}\label{gencanonique}
V_n(B):=\left\{y\in X: d(y,x_B)=\mathrm{dist}(y,X_n)\right\}.
\end{equation}
The sets in $\mathcal{V}_n$ satisfy the following properties:
\begin{enumerate}
\item $X=\bigcup\limits_{B\in\mathcal{S}_n} V_n(B)$ for each $n\geq 0$,
\item for each $n\geq 0$ and $B\in\mathcal{S}_n$, the set $V_n(B)$ is compact and from (\ref{almostball1}) and (\ref{almostball2}), we have
\begin{equation}\label{fundaminclusions}
B\left(x_B,\kappa^{-1}r_n\right)\subset V_n(B)\subset B\left(x_B,\kappa r_n\right).
\end{equation}
\end{enumerate}
The second inclusion is a consequence of the fact that $\mathcal{S}_n$ is a covering of $X$. From $\mathcal{V}$ we can define for each $n\geq 0$, a partition $\{T_{n}(B)\}_{B\in\mathcal{S}_n}$ of $\mathcal{S}_{n +1}$ as follows: associate to $B'\in\mathcal{S}_{n+1}$ an element $B\in\mathcal{S}_n$ which verifies $x_{B'}\in V_n(B)$. Choose any one of them if there are several such elements. 

If $B\in \mathcal{S}_n$ and $r>0$, we denote by $N_r(B)$ the set of $B'\in\bigcup_{l\geq n+1}\mathcal{S}_l$ such that $x_{B'}\in B\left(x_B,r\right)$. We also set $A_r(B):=N_r(B)\cap\mathcal{S}_{n+1}$. With this notation, according to (\ref{fundaminclusions}) above, we have
\begin{equation}\label{fundaminclusions2}
A_{\kappa^{-1}r_n}(B)\subset T_n(B)\subset A_{\kappa r_n}(B).
\end{equation}
We say that the elements of $T_n(B)$ are descendants of $B\in\mathcal{S}_n$, and that $B$ is their common parent. This is reminiscent of the construction of dyadic decompositions, see for example \cite{Chr}.

Note that for all $n\geq 0$ and $B\in\mathcal{S}_n$, the cardinal number of $T_n(B)$ is less than or equal to $K_D\left(\kappa,a\right)$ a constant which depends only on $\kappa$, $a$ and the doubling constant of $X$. Also since $X$ is uniformly perfect, for any constant $N\in\N$, we can choose $a$ large enough which depends only on the constants $K_P$ and $\kappa$, such that $\#A_{\kappa^{-1}r_n}(B)\geq N$ for all $n$ and $B\in\mathcal{S}_n$.

We define the genealogy of an element $B\in S$ as
$$g(x)=\begin{cases} B & \text{ if } B\in \mathcal{S}_0 \\ 
(B_0,B_1,\ldots,B_{n+1}) & \text{ if } B\in \mathcal{S}_{n +1}, n\geq 0\end{cases},$$
where $B_{n+1}=B$ and $B_j\in\mathcal{S}_j$ is the parent of $B_{j +1}\in\mathcal{S}_{j+1}$ for $j=0,\ldots,n$. Let $B\in\mathcal{S}_n$, denote by $D(B)$ the elements of $\mathcal{S}$ which are descendants of $B$. That is,
$$D(B)=\left\{B'\in \mathcal{S}_l: l\geq n+1, g(B')_n=B\right\}.$$
We also set $D_l(B):=D(B)\cap\mathcal{S}_ {l}$, $l>n$, the descendants of $B$ in the generation $l$. The genealogy $\mathcal{V}$ determines a spanning tree $T$ of $G_d$, where $e=(B, B')$ is an edge of $T$ if and only if $B$ or $B'$ is the parent of the other.

Let $Z_d$ be the subgraph of $G_d$ such that it has the same vertices that $G_d$, and the edge $e=(B,B')$ of $G_d$ is also an edge of $Z_d$ if and only if either $e$ is a horizontal edge (i.e. $B$ and $B'$ belong to the same $\mathcal{S}_n$), or $e$ belongs to the spanning tree $T$ given by the genealogy $\{\mathcal{V}_n\}$. In this way, $Z_d$ is a connected graph, and we equip it with the length distance that makes all edges isometric to the interval $[0,1]$. Thus, we obtain a geodesic distance which we will denote by $|\cdot |_1$. The inclusion $Z_d\hookrightarrow G_d$ is co-bounded, because all vertices belong to $Z_d$, and we have $|\cdot|\leq|\cdot|_1$.

Recall that a \emph{quasi-isometry} between two metric spaces is a map $f:\left(Z_1, |\cdot|_1\right)\to \left (Z_2, |\cdot|_2\right)$ which satisfies the following properties: there exist constants $\Lambda\geq 1$ and $c\geq0$ such that
\begin{enumerate}
\item[(i)] for all $x, y \in Z_1$, we have $$\frac{1}{\Lambda}|x-y|_1-c\leq|f(x)-f(y)|_2\leq\Lambda|x-y|_1+c,$$
\item[(ii)] and for all $z\in Z_2$, $\mathrm{dist}_2\left(z,f(Z_1)\right)\leq c$, i.e. $f$ has co-bounded image.
\end {enumerate}
See for example \cite{CDP} and \cite{GH}. More important for us, is the fact that a quasi-isometry $f:\left (Z_1,|\cdot|_1\right)\to\left(Z_2,|\cdot|_2\right)$ induces a quasisymmetric homeomorphism $\hat{f}:\partial Z_1\to\partial Z_2$ between the boundaries, when they are endowed with visual metrics. Therefore, it preserves the canonical conformal gauge of the boundary. See Section 3 of \cite{H2}  for a more precise statement of this property.

Now let $e=(B,B')$ be an edge of $G_d$ which does not belongs to $Z_d$. We can assume, without loss of generality, that $B\in\mathcal{S}_{n +1}$ and $B'\in\mathcal{S}_n$. According to item (v) of Lemma \ref{propgraphe} below, if $B''\in\mathcal{S}_n$ is the parent of $B$, then $e'= (B'',B')$ is a horizontal edge of $G_d$, and hence of $Z_d$. This implies that for any edge-path $\gamma$ in $G_d$, there exists an edge-path $\gamma_1\in Z_d$ with $\ell_1\left(\gamma_1\right)\leq 2\ell\left(\gamma\right)$. This implies that $|\cdot|_1\leq 2|\cdot|+2$, so $Z_d$ is quasi-isometric to $G_d$.

This completes the construction of the geodesic hyperbolic metric graph $Z_d$ with boundary at infinity homeomorphic to $X$. With this identification, the distance $d$ is quasisymmetrically equivalent to any visual metric on $\partial Z_d$. The vertices of the graph $Z_d$ are the elements of $\mathcal{S}=\bigcup_n\mathcal{S}_n$, and the edges are of two types: vertical or horizontal. The vertical edges form a connected rooted tree $T$ and the horizontal ones describe the combinatorics of intersections of the elements in $\mathcal{S}$.

\subsection{Some properties of the graph $Z_d$}

For some technical reasons, the parameters $a$ and $\lambda$ must be large enough. We fix $\lambda\geq 32$, and it is thought to be an additional constant. Once $\lambda$ is fixed, we can choose the parameter $a$ freely, with the sole condition that
\begin{equation}\label{constantes}
a\geq K:= 6\kappa^2\max\{\lambda, K_P\}.
\end{equation}
This inequality ensures that certain conditions, which occur naturally in subsequent computations, are verified, and guarantees some geometric properties of the graph $Z_d$. In the following lemma we list some of these properties which will be useful in the sequel, and which show how the relation (\ref{constantes}) is involved in the geometry of the graph. The reader may skip this lemma and consult the required points at the time these properties are quoted.

\begin{lemma}[Properties of the graph]\label{propgraphe} Write $\tau=\frac{a}{a-1}$ and $\epsilon_n=\frac{a^{-n+1}}{a-1}=\tau r_n$.
\begin{enumerate}
\item[(i)] Let $n\geq 0$ and $B\in \mathcal{S}_n$. Then
\begin{equation}\label{descendents}
N_{\kappa^{-1}r_n}\left(B\right)\subset D(B)\subset N_{\tau\epsilon_n}(B).
\end{equation}
Recall the notation adopted in (\ref{fundaminclusions2}).
\item[(ii)] Let $z$ be a point of $X$, $r>0$ and $n\geq 1$ such that $r_n\leq r$. If $B$ is an element of $\mathcal{S}_{n+1}$ which verifies $d(z,x_B)<r_{n+1}$, then 
\begin{equation}\label{inclusion1}
X(B):=\left\{x_{B'}:B'\in D(B)\right\}\subset B\left(z,\frac{r}{2}\right).
\end{equation}
\item[(iii)]  If $B$ is an element of $\mathcal{S}_n$ and $z$ is a point of $X$ such that $d(z,x_B)\geq r+2\kappa r_n$, then 
\begin{equation}\label{inclusion2}
X(B)\cap B(z,r)=\emptyset.
\end{equation}
\item[(iv)] Let $B$ and $B'$ be two elements of $\mathcal{S}_ {n +1}$ such that $d(x_B,x_{B'})\leq 4r_n$. If $C$ and $C'$ are elements of $\mathcal{S}_n$ such that $d\left(x_C, x_B\right)\leq 2r_n$ and $d\left(x_{C'},x_{B'}\right)\leq 2r_n$, then $C$ and $C'$ are neighbors.
\item[(v)] Let $B$ be an element of $\mathcal{S}_{n+1}$ and $C,B'\in\mathcal{S}_n$ be such that there exists an edge in $G_d$ joining $B$ and $C$, and $B'$ is the parent of $B$. Then $B'$ and $C$ are neighbors.
\item[(vi)] For all $n\geq 0$ and $B\in\mathcal{S}_n$, the cardinal number of the set $A_{\kappa^{-1}r_n}(B)$, defined in (\ref{fundaminclusions2}) above, is at least two.
\item[(vii)] Let $B$ be an element of $\mathcal{S}_n$ and $B'$ an element of $\mathcal{S}_{n+1}$ such that $x_B$ belongs to the ball $B\left(x_{B'},\kappa r_{n+1}\right)$. Then, all the neighbors of $B'$ in $\mathcal{S}_{n+1}$ are descendants of $B$, i.e. they belong to $T_n(B)$.
\end{enumerate}
\end{lemma}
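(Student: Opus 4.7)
The plan is to verify each of the seven items by direct estimation, using four common ingredients: the ``almost-ball'' axioms (\ref{almostball1})--(\ref{almostball2}), the horizontal adjacency condition $\lambda\cdot B\cap\lambda\cdot B'\neq\emptyset$, the Voronoi-style parent relation $x_{B'}\in V_n(B)$, and the size inequality (\ref{constantes}) on $a$. In every case one applies a triangle inequality and then uses (\ref{constantes}) to absorb the factors of $\kappa$, $\lambda$ and $\tau=a/(a-1)$ that appear. Since (ii) and (iii) rely on (i), I would treat the items in their stated order; the others are essentially independent.

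For (i), the right inclusion is obtained by writing the genealogy $g(B')=(B_n,B_{n+1},\ldots,B_l)$ with $B_n=B$ and $B_l=B'$ and summing $d(x_{B_{k+1}},x_{B_k})\leq \kappa r_k$ as a geometric series, yielding $d(x_{B'},x_B)\leq \tau\epsilon_n$. The left inclusion rests on (\ref{almostball2}): centres in $X_n$ are $2\kappa^{-1}r_n$-separated, so $d(x_{B'},x_B)<\kappa^{-1}r_n$ already forces $x_B$ to be the \emph{unique} nearest point of $X_n$ to $x_{B'}$; a short induction on $l-n$, in which the largeness of $a$ keeps each intermediate ancestor $x_{B_k}$ inside the Voronoi cell of $B$'s ancestor at that level, then yields $g(B')_n=B$. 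Items (ii) and (iii) follow from (i) by one triangle inequality: in (ii) the descendant tail $\tau\epsilon_{n+1}=\tau^2 r_{n+1}$ is dominated by $r/2$ once $a\geq K$, and in (iii) it is dominated by $2\kappa r_n$ as soon as $\tau^2\leq 2\kappa$, both automatic from (\ref{constantes}).

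The horizontal items (iv) and (v) are pure triangle inequalities: in (iv) one gets $d(x_C,x_{C'})\leq 2r_n+4r_n+2r_n=8r_n\leq 2\lambda\kappa r_n$ because $\lambda\geq 32$, and in (v) one combines the edge bound $d(x_B,x_C)\leq \lambda\kappa(r_n+r_{n+1})$ with the parenthood bound $d(x_B,x_{B'})\leq \kappa r_n$ to land below the threshold $2\lambda\kappa r_n$ as soon as $\lambda(a-1)>a$. For (vi), I would apply uniform perfectness at scale $\kappa^{-1}r_n/2$ to produce a point $y$ in a thin annulus around $x_B$; both the element of $\mathcal{S}_{n+1}$ covering $y$ and the element of $\mathcal{S}_{n+1}$ covering $x_B$ itself lie in $A_{\kappa^{-1}r_n}(B)$, and (\ref{constantes}) --- specifically $a>4K_P\kappa^2$ --- forces them to be distinct. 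Finally, (vii) is another unique-closest-point argument: any neighbour $C'\sim B'$ in $\mathcal{S}_{n+1}$ satisfies $d(x_{C'},x_B)\leq (2\lambda+1)\kappa r_{n+1}$, and (\ref{constantes}), in the form $a>(2\lambda+1)\kappa^2$, makes this strictly smaller than $d(x_{C'},x_C)$ for every $C\in X_n\setminus\{x_B\}$, so $C'\in T_n(B)$.

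The main obstacle is the left inclusion of (i): the genealogy is constructed level by level, so ``ancestor at level $n$'' is not literally the same as ``nearest neighbour in $X_n$'', and the uniqueness argument has to be transported inductively through every intermediate level. The other items are routine triangle inequalities, but they all have to be written out carefully because the underlying mechanism --- and the real reason for the choice in (\ref{constantes}) --- is always the same: the cumulative descendant displacement, of order $\kappa\tau r_n$, must stay strictly below the sibling separation $2\kappa^{-1}r_n$ at the parent level, so that a ball at level $n$ effectively ``sees'' only its own descendants.
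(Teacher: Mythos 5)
Your proposal follows essentially the same route as the paper for all seven items, and your estimates for (ii)--(vii) reproduce the paper's computations almost verbatim (e.g.\ uniform perfectness at scale $(2\kappa)^{-1}r_n$ in (vi), the threshold $a>4\kappa^2 K_P$, the use of the right inclusion of (i) to absorb the descendant tail in (ii) and (iii), and the triangle-inequality bounds in (iv), (v) and (vii)). The one genuine deviation is your treatment of the left inclusion in (i). You propose an induction on $l-n$ that tracks each intermediate ancestor of $B'$; the paper instead argues in a single step: pass directly to $B'':=g(B')_{n+1}$, invoke the already-established right inclusion to get $d(x_{B''},x_{B'})\leq\kappa\epsilon_{n+1}$, and then
\[
d(x_{B''},x_B)\leq d(x_{B''},x_{B'})+d(x_{B'},x_B)<\kappa\epsilon_{n+1}+\tfrac{1}{2\kappa}r_n<\kappa^{-1}r_n
\]
by (\ref{constantes}), which forces $x_{B''}$ to be strictly closer to $x_B$ than to any other centre in $X_n$; hence $B''\in T_n(B)$ and $B'\in D(B)$. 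This matters: with the fixed threshold $\kappa^{-1}r_n$, your level-by-level induction picks up an additive error of order $\kappa r_{k}$ at each intermediate level $k$, and these accumulate to $\kappa\epsilon_{n+1}$, pushing the running distance past the threshold before you reach level $n+1$, so the induction as you describe it does not close. The paper's one jump, powered by the geometric-series bound, is what makes the argument work. (You may also notice that the paper's proof in fact establishes only $N_{(2\kappa)^{-1}r_n}(B)\subset D(B)$, a factor of $2$ weaker than the lemma statement's $N_{\kappa^{-1}r_n}(B)\subset D(B)$; the discrepancy is a benign typo, harmless for the applications since (\ref{fundaminclusions2}) already gives the $l=n+1$ case with the better constant.)
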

\begin{proof}$ $
\begin{enumerate}

\item[(i)] If $B\in\mathcal{S}_n$ and $B'\in D(B)\cap\mathcal{S}_l$ with $l\geq n+1$, then
$$d(x_B,x_{B'})\leq\sum_{i=n}^{l-1}d(x_i,x_{i+1})\leq\sum_{i=n}^{l-1}\kappa a^{-i}\leq \frac{a^{-n+1}}{a-1}=\kappa\epsilon_n,$$
where $x_i$ is the center of $g(B)_i$.
Thus $D(B)\subset N_{\kappa\epsilon_n}(B)$. Analogously, if $d(x_B,x_{B'})<\frac{r_n}{2\kappa}$, with $B'\in S_l$,  $B\in S_n$ and $l>n$, then $B'\in D(B)$. In fact, let $B''=g(B')_{n+1}$, then $$d(x_{B''},x_B)\leq d(x_{B''},x_{B'})+d(x_{B'},x_B)< \kappa\epsilon_{n+1}+\frac{a^{-n}}{2\kappa}=a^{-n}\left(\frac{\kappa\tau}{a}+\frac{1}{2\kappa}\right)<\frac{a^{-n}}{\kappa},$$
according to (\ref{constantes}). Thus $B''\in D(B)\cap S_{n+1}$, which implies $B'\in D(B)$.

\item[(ii)] We see that if $B'\in D(B)$, then 
\begin{align*}
d(z,x_{B'})&\leq d(z,x_B)+d(x_B,x_{B''})<a^{-(n+1)}+\kappa\epsilon_{n+1}\\
&=a^{-n}\left(\frac{1}{a}+\frac{\kappa\tau}{a}\right)<r\left(\frac{1+\kappa\tau}{a}\right)<\frac{r}{2},
\end{align*} 
according to (\ref{constantes}). 

\item[(iii)] Since $D(B)\subset N_{\kappa\epsilon_n}(B)$, for all $B'\in D(B)$, we have 
$$d(z,x_{B'})\geq r+2\kappa a^{-n}-\kappa\epsilon_n>r,$$
because after the choice of $a$, we have $\tau<2$ (see (\ref{constantes})).

\item[(iv)] Since $d\left(x_B,x_{B'}\right)\leq 4a^{-n}$ we have 
$$d\left(x_C,x_{C'}\right)\leq a^{-n}(4+2\kappa)\leq \lambda \kappa a^{-n},$$
where the last inequality is true because $\lambda>6$. Therefore $C\sim C'$.
\item[(v)] Let $w\in \lambda\cdot B$. Then 
$$d\left(w,x_{B'}\right)\leq d\left(w,x_B\right)+d\left(x_{B'},x_B\right)\leq \kappa\lambda a^{-(n+1)}+\kappa a^{-n}=\left(\frac{\lambda}{a}+1\right)\kappa a^{-n}<\lambda \kappa a^{-n},$$
according to (\ref{constantes}). Thus $\lambda \cdot B\subset \lambda \cdot B'$, which implies $\lambda\cdot C\cap\lambda\cdot B'\neq\emptyset$ and that $B'\sim C$.
\item[(vi)] Let $B$ be an element of $\mathcal{S}_n$ with $n\geq 0$. Since $X$ is uniformly perfect, there exists a point $y$ in the ball $B\left(x_B,(2\kappa)^{-1}r_n\right)$ such that $d(y,x_B)\geq (2\kappa K_P)^{-1}r_n$. So if $B'$ is an element of $\mathcal{S}_{n+1}$ such that $y\in B\left(x_{B'},\kappa r_{n+1}\right)$, we have 
$$\kappa r_{n+1}<\frac{r_n}{2\kappa K_P}-\kappa r_{n+1}\leq d\left(x_{B'},x_B\right)\leq \left(\frac{\kappa}{a}+\frac{1}{2\kappa}\right)a^{-n}\leq \frac{1}{\kappa}r_n.$$
The first and the last inequalities are consequence of (\ref{constantes}). Let $B''$ be an element of $\mathcal{S}_{n+1}$ such that $x_B$ belongs to the ball $B\left(x_{B''},\kappa r_{n+1}\right)$. Then $B'$ and $B''$ are two different elements of $A_{\kappa^{-1}r_n}(B)$.
\item[(vii)] Indeed, if $B''\sim B'$ in $\mathcal{S}_{n+1}$, then $d(x_{B''},x_{B})\leq 2\lambda \kappa r_{n+1}\leq \kappa^{-1}r_n$, so all neighbors of $B'$ in $\mathcal{S}_{n+1}$ belong to the set $A_{\kappa^{-1}r_n}(B)$.
\end{enumerate}
\end{proof}

\subsection{Proof of Theorem \ref{description}}\label{sec:caracterisation}

We start by recalling the notation, given in the Introduction, involved in the statement of the theorem. For each element $B\in\mathcal{S}$, there exists a unique geodesic segment in $Z_d$ which joins the base point $w$ and $B$; it consists of vertical edges that join the parents of $B$. Denote it by $[w,B]$. Given a function $\rho:\mathcal{S}\to (0,1)$, which can be interpreted as an assignment of ``new relative radius'' of the elements of $\mathcal{S}$ ---or, as we will see later, an assignment of ``new lengths'' for the edges of $Z_d$--- the ``new radius'' of an element $B\in \mathcal{S}$, is expressed by the function $\pi:\mathcal{S}\to (0,1)$ given by
$$\pi(B):=\prod\rho(B'),$$
where the product is taken over all the balls $B'\in \mathcal{S}\cap [w,B]$. 

If $\gamma=\left\{(B_i, B_ {i +1})\right\}_{i=1}^{N-1}$ is a path of edges in $Z_d$ with $B_i\in \mathcal{S}$, we define the $\rho$-length of $\gamma$ by
$$L_\rho\left(\gamma\right)=\sum\limits_{i =1}^N\pi\left(B_i\right).$$
Let $\alpha>1$. For $x,y\in X$, let $m\in\N$ be maximal such that there exists $B\in S_m$ with $x, y\in\alpha \cdot B$. We let
$$c_\alpha(x, y):=\left\{B\in S_m: x, y\in\alpha\cdot B\right\},$$ 
and call it \emph{the center} of $x$ and $y$. Note that if $m=|c_\alpha(x, y)|$ ---distance to the base point $w$--- then by maximality of $m$, and the fact that $\mathcal{S}_{m+1}$ is a covering of $X$, we have 
$$\left(\alpha-1\right)\kappa r_{m+1}\leq d(x,y)\leq 2\kappa \alpha r_m.$$
Define $\pi\left(c_\alpha(x,y)\right)$ as the maximum of $\pi\left(B\right)$ for $B\in c_\alpha(x, y)$.  We also let $\Gamma_n(x,y)$ be the family of paths in $Z_d$ that join two elements $B$ and $B'$ of $\mathcal{S}_n$, with $x\in B$ and $y\in B'$. Finally, for an element $B\in \mathcal{S}_m$ and $n>m$, we denote by $D_n(B)$ the set of its descendants $B'$ in $\mathcal{S}_n$.

Suppose the parameters $a$ and $\lambda$ verify (\ref{constantes}), and the function $\rho$ satisfies the following conditions.
\begin{enumerate}
\item[(H1)] (Quasi-isometry) There exist $0<\eta_-\leq\eta_+<1$ so that $\eta_-\leq\rho(B)\leq\eta_+$ for all $B\in\mathcal{S}$.
\item[(H2)] (Gromov product) There exists a constant $K_0\geq 1$ such that for all $B,B'\in \mathcal{S}$ with $B\sim B'$, we have
$$\frac{\pi(B)}{\pi(B')}\leq K_0.$$
\item[(H3)] (Visual parameter) There exist $\alpha\in\left[2,\lambda/4\right]$ and a constant $K_1\geq 1$ such that for any pair of points $x,y\in X$, there exists $n_0\geq 1$ such that if $n\geq n_0$ and $\gamma$ is a path in $\Gamma_n(x,y)$, then
$$L_\rho\left(\gamma\right)\geq K_1^{-1} \cdot\pi\left(c_\alpha(x, y)\right).$$
\item[(H4)] (Ahlfors regularity) There exist $p>0$ and a constant $K_2\geq 1$ such that for all $B\in \mathcal{S}_m$ and $n>m$, we have
$$K_2^{-1}\cdot\pi(B)^p\leq\sum\limits_{B'\in D_{n}(B)}\pi\left(B'\right)^p\leq K_2\cdot\pi(B)^p.$$
\end{enumerate}

We must show that there exists a distance $\theta_\rho$ on $X$, quasisymmetrically equivalent to $d$ and Ahlfors regular of dimension $p$. Moreover, from the proof we will obtain $\theta_\rho(x,y)\asymp \pi\left(c_\alpha(x, y)\right)$ for all $x,y\in X$. Conversely, any distance in the gauge is bi-Lipschitz equivalent to a distance built in that way.

The proof of the direct implication is made in several steps. The first one (Lemma \ref{geodverticales}),
is to find a distance $|\cdot|_\rho$ in $Z_d$, so that $(Z_d, |\cdot|_\rho)$ is quasi-isometric to $Z_d$ and $|B|_\rho=\log\pi(B)^{-1}$ for all $B\in\mathcal{S}$. This is where the hypotheses (H1) and (H2) are used, they give us a control of the length of vertical curves in $Z_d$ and of the Gromov product in this new metric.

The second step is to show the existence of a visual metric $\theta_\rho$ in the boundary of $(Z_d, |\cdot|_\rho)$, of large enough visual parameter (Proposition \ref{controlepsilon}). It is mainly here where we use the assumption (H3). We automatically have $\theta_\rho\in\mathcal{J}(X,d)$, because it is a visual metric. Finally, the control of the visual parameter and hypothesis (H4), will enable us to show that the $p$-dimensional Hausdorff measure is Ahlfors regular (Proposition \ref{ahregular}).

\subsubsection{Proof of the converse}

We start by proving the converse of the theorem, because it helps understanding the significance of the hypotheses.

Let $\theta\in\mathcal{J}\left(X,d\right)$ be Ahlfors regular of dimension $p>0$, and let $\eta:[0,\infty)\to [0,\infty)$ be the distortion function of $id:\left(X, d\right)\to\left(X,\theta\right)$. We write $\diam_\theta$ for the diameter in the distance $\theta$, and $\mu$ its $p$-dimensional Hausdorff measure. For $n\geq 1$ and $B\in\mathcal{S}_n$, if we denote the parent of $B$ by $B'=g_{n-1}(B)$, we define
$$\rho\left(B\right):=\left(\frac{\mu\left(\lambda \cdot B\right)}{\mu\left(\lambda\cdot B'\right)}\right)^{1/p}.$$

With this definition, $\pi(B)=\mu\left(\lambda\cdot B\right)^{1/p}$. We begin with some general remarks. Let $\beta\geq 1$, $r>0 $ and $x\in X$, then there exists $s>0$ such that if we denote $B_\theta(s)$ the ball in the distance $\theta$ centered at $x$ and of radius $s$, then $B_\theta(s)\subset B(x,r)\subset B(x,\beta r)\subset B_\theta(H_\beta s)$, where $H_\beta=\eta(\beta)$. Therefore, there exists a constant $K_\beta$, which depends only on $H_\beta$ and the constant $K_P$, such that
$$1\leq\frac{\diam_\theta B(x, \beta r)}{\diam_\theta B(x,r)}\leq K_\beta.$$
In particular, this implies ---by taking $\beta= 1$--- that there exists a constant $K$, depending only on $p$, $H_1$ and the regularity constant of $\mu$, such that
$$K^{-1}\cdot\diam_\theta B(x,r)^p\leq\mu\left(B(x,r)\right)\leq K\cdot\diam_\theta B(x,r)^p.$$

First we check (H1): let $n\geq 1$ and $B\in\mathcal{S}_n$, denote $B'$ the parent of $B$ in $\mathcal{S}_{n-1}$. We have $\lambda\cdot B\subset 2\cdot B'$. Since $X$ is uniformly perfect of constant $K_P$, we know that $A_{n-1}:=\left(2K_P\cdot B'\right)\setminus 2\cdot B'\neq\emptyset$. Hence, there exists $C\in\mathcal{S}_n$ such that $C\cap A_{n-1}\neq\emptyset$. We have $\lambda\cdot B\cap C = \emptyset$, because by the triangle inequality $d(C,\lambda\cdot B)\geq \kappa(2r_{n-1}-(\lambda+3)r_n)>0$, and by the choice of $a$ and $\lambda$, we have $C\subset\lambda\cdot B'$ (see (\ref{constantes})). Since $\mu\left(\lambda \cdot B\right)+\mu\left(C\right)\leq\mu\left(\lambda B'\right)$, we obtain
$$\frac{\mu\left(\lambda \cdot B\right)}{\mu\left(\lambda \cdot B'\right)}\leq 1-\frac{\mu\left(C\right)}{\mu\left(\lambda B'\right)}.$$
On the other hand,
$$\frac{\mu\left(C\right)}{\mu\left(\lambda\cdot B'\right)}\geq\frac{1}{K^2}\frac{(\diam_\theta C)^p}{\left(\diam_\theta \lambda\cdot B'\right)^p}\geq\frac{1}{2K^2\cdot\eta\left(2\lambda K_P a\right)^p}:=\delta.$$
So it suffies to take $\eta_+=1-\delta^{1/p}$. Similarly, we have
$$\rho(B)\geq\frac{1}{2K^{2/p}}\cdot\eta\left(\frac{\diam\lambda\cdot B'}{\diam\lambda\cdot B}\right)^{-1}\geq\frac{1}{2K^{2/p}}\cdot\eta\left(\frac{2a}{K_P}\right)^{-1}:=\eta_-.$$

For (H2), we see that if $B,B'\in\mathcal{S}_n$ are neighbors, then $\lambda \cdot B'\subset 3\lambda \cdot B$. Thus, there exists a constant $K_0$ that depends only on $\eta$, $K$ and $p$, such that
$$\frac{\pi(B)}{\pi(B')}=\left(\frac{\mu\left(\lambda\cdot B\right)}{\mu\left(\lambda\cdot B'\right)}\right)^{1/p}\leq K^{2/p}\frac{\diam_\theta\lambda\cdot B}{\diam_\theta\lambda\cdot B'}\leq K_0.$$

We take $\alpha=2$ and we now look at (H3). Let $x,y\in X$, $m=\left|c_2(x, y)\right|$ and $C\in c_2(x,y)$. We have
$$\frac{\theta(x,y)}{\diam_\theta\lambda\cdot C}\geq \frac{1}{2}\cdot \eta\left(\frac{\diam\lambda\cdot C}{d(x,y)}\right)^{-1}\geq \frac{1}{2}\cdot \eta\left(2\lambda a\right)^{-1}.$$
Therefore, there exists a constant $K'$ which depends only on $K$ and $a$, such that
$$\theta(x,y)\geq \frac{1}{K'}\cdot \pi(C).$$
On the other hand, if $B,B'\in\mathcal{S}_n$ are such that $x\in B$ and $y\in B'$, with $n\geq m$, and if $\gamma=\left\{(B_i,B_{i +1})\right\}_{i=1}^N$ is a path of the graph $Z_d$ with $B_1=B$ and $B_N=B'$, we have
$$\theta(x,y)\leq \sum\limits_{i=1}^N\diam_\theta\lambda\cdot B_{i}\leq K^{1/p}\sum\limits_{i=1}^N\pi\left(B_i\right).$$
Thus, we obtain (H3) with $K_1=\left(K'K^{1/p}\right)^{-1}$.

Finally, we look at (H4). Let $m\geq 0$, $n\geq m+1$ and $B\in\mathcal{S}_m$. Since the union of the balls $\lambda\cdot B'$, with $B'\in D_n(B)$, contains the ball centered at $x_B$ and of radius $(2\kappa)^{-1}r_m$, we have
$$\mu\left(B(x_B,(2\kappa)^{-1}r_m)\right)\leq \sum_{B'\in D_n(B)}\pi(B')^p.$$
Remember that the balls $\left\{B\left(x_{B'},\kappa^{-1}r_n\right): B'\in D_n(B)\right\}$ are pairwise disjoint. Therefore, there exists a constant $K''$ which depends only on the function $H$ and the constants $\lambda$, $\kappa$ and $K$, such that
$$\sum\limits_{B'\in D_n(B)}\pi(B')^p\leq K''\sum\limits_{B'\in D_n(B)}\mu\left(B\left(x_{B'},\kappa^{-1}r_n\right)\right)\leq \mu\left(\lambda\cdot B\right).$$
This proves (H4) with a constant $K_2$ which depends on $\lambda$. Moreover, the proof of (H3) shows that for all $x,y\in X$, we have
$$\pi\left(c_2(x,y)\right)\asymp \theta(x,y),$$
where the constants of comparison depend on $a$ and $\lambda$. The distance $\theta_\rho$ constructed using the function $\rho$ is also bi-Lipschitz to $\pi\left(c_2(x, y)\right)$, so $\theta$ is bi-Lipschitz equivalent to $\theta_\rho$. This ends the proof of the converse.

\subsubsection{Proof of the direct implication}
We start with the following lemma.

\begin{lemma}\label{geodverticales}
There exists a distance $|\cdot |_\rho$ on the graph $Z_d$ bi-Lipschitz equivalent to $ |\cdot |$, with the property that any vertical path of edges $\gamma$ in $Z_d$, joining $B\in\mathcal{S}_n$ and $B'\in\mathcal{S}_m$, is a geodesic segment for the distance $|\cdot|_\rho$ and its length is
$$\ell_\rho(\gamma)=\left|\log\frac{1}{\pi(B)} -\log\frac{1}{\pi(B')}\right|.$$
In particular, for all $B\in\mathcal{S}$, we have $|x-w|_\rho=\log\frac{1}{\pi(B)}$. We denote by $Z_\rho$ the graph $Z_d$ with the distance $|\cdot|_\rho$.
\end{lemma}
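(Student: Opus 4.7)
\medskip

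\textbf{Plan.} The goal is to re-weight the edges of $Z_d$ so that the distance from the root to every vertex becomes exactly $\log(1/\pi(\cdot))$, while staying bi-Lipschitz to the original unit-edge metric. The natural choice of edge lengths comes directly from the target formula: for a vertical edge $e$ joining a parent $B'\in\mathcal{S}_{n-1}$ to a child $B\in\mathcal{S}_n$ (so that $\pi(B)=\rho(B)\pi(B')$), set
\[
\ell_\rho(e):=\log\frac{1}{\rho(B)}=\log\frac{1}{\pi(B)}-\log\frac{1}{\pi(B')};
\]
for a horizontal edge $e$ between two neighbors $B\sim B'$ in the same $\mathcal{S}_n$, set $\ell_\rho(e):=\log K_0$. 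I then let $|\cdot|_\rho$ be the length metric obtained by identifying each edge of $Z_d$ isometrically with an interval of the corresponding length.

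The bi-Lipschitz equivalence with $|\cdot|$ is immediate from (H1) and (H2): hypothesis (H1) gives $\log(1/\eta_+)\le\ell_\rho(e)\le\log(1/\eta_-)$ for every vertical edge, while horizontal edges have constant length $\log K_0$ (which is strictly positive; if $K_0=1$ simply replace it by any fixed constant greater than one, noting the hypothesis still holds). Since every edge has $\ell_\rho$ comparable to its original unit length, the two length metrics on $Z_d$ are bi-Lipschitz equivalent.

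The key point is the geodesic property of vertical paths, for which I introduce the height function $h:\mathcal{S}\to[0,+\infty)$, $h(B):=\log(1/\pi(B))$ (with $h(w)=0$). For a vertical edge the definition of $\ell_\rho$ gives $\ell_\rho(e)=|h(B_1)-h(B_2)|$; for a horizontal edge between $B\sim B'$ in $\mathcal{S}_n$, hypothesis (H2) yields
\[
|h(B)-h(B')|=\bigl|\log\pi(B')-\log\pi(B)\bigr|\le\log K_0=\ell_\rho(e).
\]
Thus every edge satisfies $\ell_\rho(e)\ge|h(B_1)-h(B_2)|$. Summing along an arbitrary edge-path $\gamma'$ from $B$ to $B'$ and applying the triangle inequality in $\R$ gives $\ell_\rho(\gamma')\ge|h(B)-h(B')|$. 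On the other hand, a vertical path $\gamma$ from $B$ to $B'$ (one being an ancestor of the other) telescopes to exactly $|h(B)-h(B')|=|\log(1/\pi(B))-\log(1/\pi(B'))|$, which is therefore the infimum and so $\gamma$ is a geodesic. Applying this with $B'=w$ yields $|B-w|_\rho=\log(1/\pi(B))$, as required.

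The main (and essentially only) obstacle is finding the right length to assign to horizontal edges: it must be at least $|h(B)-h(B')|$ to block horizontal shortcuts, yet bounded above by a universal constant to preserve bi-Lipschitz equivalence. Hypothesis (H2) is exactly what reconciles these two demands, so the choice $\ell_\rho(e)=\log K_0$ on horizontal edges makes the height-function comparison argument run cleanly and no further ingredient beyond (H1) and (H2) is needed.
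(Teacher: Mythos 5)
Your proof is correct, and it takes a genuinely different (and arguably cleaner) route than the paper's. The paper defines the same vertical edge-lengths $\log(1/\rho(B))$, assigns horizontal edges a constant length $K_3:=2\max\{-\log\eta_-,-(\log\eta_+)^{-1},\log K_0\}$, and then argues by contradiction: if a $|\cdot|_\rho$-geodesic from $w$ to some vertex $B$ contained a horizontal edge, one examines the first such edge $(B',B)$ and finds that the purely vertical path from $w$ to $B$ is strictly shorter, since (H2) gives $\log(1/\pi(B))\le\log(1/\pi(B'))+K_3/2<\log(1/\pi(B'))+K_3$ — this is where the factor $2$ in $K_3$ is used. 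You instead argue directly via a $1$-Lipschitz potential: the height $h=\log(1/\pi)$ changes by at most the $\rho$-length across every edge (exactly on vertical edges by construction, and by (H2) on horizontal ones), so any edge-path from $B$ to an ancestor $B'$ has $\ell_\rho\ge|h(B)-h(B')|$, which the monotone vertical path attains. Your argument avoids the contradiction, needs no factor of two in the horizontal weight, and gives the geodesic property for any ancestor pair rather than just $B'=w$. The only place you are slightly more informal than the paper is the degenerate case $K_0=1$ (and more generally $K_0$ close to $1$), where $\log K_0$ may fail to be bounded away from zero: the paper's $K_3$ is designed to be automatically bounded below by $-2\log\eta_-$, whereas you have to note separately, as you do, that (H2) remains valid with any larger constant, so one may replace $K_0$ by $\max\{K_0,2\}$, say, without affecting the telescoping inequality. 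With that noted, your proof is complete.
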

\begin{figure}
\centering
\setlength{\unitlength}{1cm}
\begin{picture}(7,3)
\includegraphics{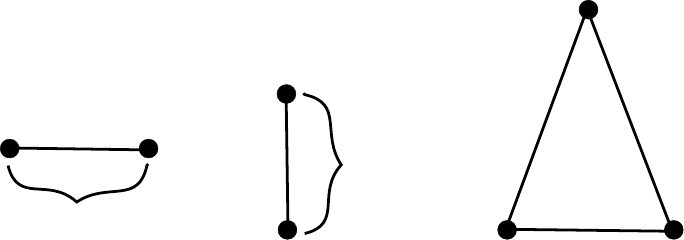}
\put(0.1,0){\footnotesize $B$}
\put(-2.3,0){\footnotesize $B'$}
\put(-1.3,0.3){\footnotesize $e_{k+1}$}
\put(-0.8,2.3){\footnotesize $w$}
\put(-0.4,1.3){\footnotesize $g$}
\put(-1.8,1.3){\footnotesize $\gamma'$}
\put(-4.5,0){\footnotesize $B$}
\put(-4.5,1.35){\footnotesize $B'$}
\put(-3.4,0.7){\tiny $\log\frac{1}{\rho(B)}$}
\put(-5.5,1.2){\footnotesize $B'$}
\put(-6.9,1.2){\footnotesize $B$}
\put(-6.3,0){\footnotesize $K_3$}
\end{picture}
\caption{\label{fig:figuredistrho}
Proof of the Lemma \ref{geodverticales}.}
\end{figure}
\begin{proof}
By (H2), there exists a constant $K_0\geq 1$ such that if $B$ and $B'$ belong to $\mathcal{S}_n$ and $B\sim B'$, then
\begin{equation}\label{czero}
\frac {1}{K_0}\leq\frac{\pi(B)}{\pi(B')}\leq K_0.
\end{equation}
Set $K_3:=2\max\left\{-\log\eta_-,-\left(\log\eta_+\right)^{-1},\log K_0\right\}>0$. Let $|\cdot|_\rho$ be the length distance in $Z_d$ such that the length of an edge $e=(B,B')$ is given by
$$\ell_\rho(e)=\begin{cases} K_3 & \text{ if } e\text{ is horizontal }\\ \log\frac{1}{\rho(B)} &\text{ if } B\in\mathcal{S}_{n+1}\text{ and } B'= g(B)_n.\end{cases}$$
Since $\frac{1}{K_3}\leq\ell_\rho(e)\leq K_3$ for any edge $e$ of $Z_d$ (by (H1)), the distance $|\cdot|_\rho$ is bi-Lipschitz equivalent to $|\cdot |$. Finally, it suffices to show that if $\gamma$ is a geodesic for $ |\cdot|_\rho$ which joins $w$ and $B\in\mathcal{S}_n$, then $\gamma$ is a path of vertical edges.

Suppose $\gamma=\{e_i\}_{i=1}^N$, and that there is a first $k\geq 1$ such that $e_{k +1}=(B',B)$ is a horizontal edge. Denote by $\gamma'= \{e_i\}_{i=1}^{k+1}$ and remark that $B',B\in\mathcal{S}_{k}$. Let $g=\{g_i\}_{i=1}^{k}$ be the path of vertical edges joining $w$ and $B$, where $g_i=(g(B)_{i-1},g(B)_i)$ for all $i=1,\ldots, k$. Then $\ell_\rho(g)=\log\frac{1}{\pi(B)}$ and $\ell_\rho(\gamma')=\log\frac{1}{\pi(B')} + K_3$. Since $B'\sim B$, we have by (\ref{czero})
$$\ell_\rho(g)=\log\frac{1}{\pi(B)}\leq\log\frac{1}{\pi(B')}+\frac {K_3}{2} <\ell_\rho(\gamma'),$$
which is a contradiction, since $\gamma'$ is also a geodesic for $|\cdot|_\rho$.
\end{proof}

Since $Z_\rho$ is a geodesic space quasi-isometric to $Z_d$, $Z_\rho$ is Gromov hyperbolic, its boundary at infinity $\partial Z_\rho $ is homeomorphic to $X$ and any visual metric on $\partial Z_\rho$ is quasisymmetrically equivalent to the original distance $d$ of $X$. We identify $\partial Z_\rho$ with $X$.

The following proposition allows us to control the visual parameter that guarantees the existence of visual metrics on $\partial Z_\rho$. For $B, B'\in Z_\rho$, we denote the Gromov product of $B$ and $B'$ in the distance $|\cdot|_\rho$ by $(B|B')_\rho$. To simplify the notation, we write $c(x,y)$ instead of $c_\alpha(x, y)$ for $x,y\in X$.

\begin{proposition}[Visual parameter control]\label{controlepsilon} There exists a visual metric $\theta_\rho$ on $\partial Z_\rho$ of visual parameter equal to $1$. Moreover, for all $x, y\in X$, we have
\begin{equation}\label{visual}
e^{-(x|y)_\rho}\asymp \pi(c(x, y)).
\end{equation}
\end{proposition}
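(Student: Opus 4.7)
The plan is to identify the Gromov product on $\partial Z_\rho = X$ as
\[
(x|y)_\rho = -\log\pi\bigl(c_\alpha(x,y)\bigr) + O(1),
\]
with additive constants depending only on $\eta_-,\eta_+,K_0$ and $K_1$, and then realize $\theta_\rho$ as a visual metric of parameter $\varepsilon=1$ via the standard chain construction. By Lemma~\ref{geodverticales}, $|B|_\rho = -\log\pi(B)$ for every vertex $B\in\mathcal{S}$, so the Gromov product $(B|B')_\rho$ for $B,B'\in\mathcal{S}_n$ with $x\in B$, $y\in B'$ reduces to an estimate for $d_\rho(B,B')$. The goal is to show
\[
d_\rho(B,B') = -\log\pi(B) -\log\pi(B') + 2\log\pi\bigl(c_\alpha(x,y)\bigr) + O(1),
\]
uniformly in $n$, and then pass to the boundary.

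For the upper bound on $d_\rho(B,B')$ (equivalently, the lower bound on the Gromov product), I build an explicit test path. Let $C := c_\alpha(x,y)\in\mathcal{S}_m$ and set $B_m := g(B)_m$, $B'_m := g(B')_m$. The path consists of the unique vertical segment from $B$ up to $B_m$, a horizontal segment of at most two edges $B_m\sim C\sim B'_m$, and the vertical segment from $B'_m$ down to $B'$. The choice $\alpha\leq\lambda/4$, together with Lemma~\ref{propgraphe}(iv), ensures that the relevant ancestor centers are close enough to $x_C$ for $B_m$ and $B'_m$ to be horizontal neighbors of $C$ in $Z_d$. Lemma~\ref{geodverticales} identifies the vertical contributions as $\log(\pi(B_m)/\pi(B))$ and $\log(\pi(B'_m)/\pi(B'))$; the horizontal contribution is at most $2K_3$; and (H2) gives $\pi(B_m)\asymp\pi(C)\asymp\pi(B'_m)$. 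Assembling these yields the required upper bound.

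For the lower bound on $d_\rho(B,B')$, let $\gamma=(B_1,\ldots,B_N)$ be a $d_\rho$-geodesic from $B$ to $B'$, so $\gamma\in\Gamma_n(x,y)$ and (H3) gives $L_\rho(\gamma)\geq K_1^{-1}\pi(C)$. Let $B^\star$ be a vertex on $\gamma$ maximising $\pi$, and let $h$ be the number of horizontal edges of $\gamma$. Using (H1)--(H2), along $\gamma$ the function $\pi$ decays geometrically by a factor $\eta_+$ per vertical descent and is modified by at most $K_0$ per horizontal step; splitting $\gamma$ by level and summing the geometric tails gives $L_\rho(\gamma) \leq c_0(h+1)\pi(B^\star)$ with $c_0$ depending only on $\eta_+$ and $K_0$. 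Hence $\pi(B^\star) \gtrsim \pi(C)/(h+1)$. On the other hand, two applications of the triangle inequality in $Z_\rho$, combined with $|B|_\rho = -\log\pi(B)$, yield
\[
d_\rho(B,B')\geq \bigl(|B|_\rho-|B^\star|_\rho\bigr)+\bigl(|B'|_\rho-|B^\star|_\rho\bigr) = -\log\pi(B)-\log\pi(B')+2\log\pi(B^\star);
\]
moreover $d_\rho(B,B')\geq K_3 h$ since each horizontal edge costs $K_3$. The calibration $K_3\geq 2\log K_0$ ensures that $K_3 h - 2\log(h+1) = O(1)$, so combining the two lower bounds dominates the logarithmic loss from $\pi(B^\star)\gtrsim\pi(C)/(h+1)$ and produces $d_\rho(B,B')\geq -\log\pi(B)-\log\pi(B')+2\log\pi(C)-O(1)$.

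Combining the two bounds gives $(B|B')_\rho = -\log\pi(C)+O(1)$ uniformly, and taking $n\to\infty$ yields the claimed estimate for $(x|y)_\rho$. To build $\theta_\rho$, define
\[
\theta_\rho(x,y) := \inf\Bigl\{\sum_{i=1}^{N}\pi\bigl(c_\alpha(x_{i-1},x_i)\bigr) : x=x_0,\ldots,x_N=y\Bigr\}.
\]
This is a pseudo-distance bounded above by $\pi(c_\alpha(x,y))$; the reverse bound $\theta_\rho(x,y)\gtrsim\pi(c_\alpha(x,y))$ follows from Frink's chain lemma applied with the Gromov product estimate and the hyperbolicity of $Z_\rho$ (inherited from $Z_d$ via the bi-Lipschitz equivalence of Lemma~\ref{geodverticales}). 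The resulting visual parameter is exactly $\varepsilon=1$ because the edge-length calibration $K_3$ was chosen precisely so that $e^{-(x|y)_\rho}$ is comparable to $\pi(c_\alpha(x,y))$. The main obstacle is the lower bound on $d_\rho$: one must convert the pointwise bound $L_\rho(\gamma)\geq K_1^{-1}\pi(C)$ supplied by (H3) into a logarithmic lower bound on $d_\rho(\gamma)$, balancing the geometric-tail control of $L_\rho$ against the vertical telescoping and the horizontal-edge cost, where the inequality $K_3\geq 2\log K_0$ is genuinely used to prevent long horizontal excursions from spoiling the estimate.
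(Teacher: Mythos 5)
You take a genuinely different route from the paper, and the difference is significant. Rather than estimating the Gromov product in $Z_\rho$ directly, the paper proves $d_\varepsilon(x,y)\asymp\pi(c(x,y))^\varepsilon$ for \emph{every} $\varepsilon\in(0,1]$ (the upper bound is Lemma~\ref{bornesup}; the lower bound is (H3) applied to $\rho_\varepsilon$ together with Lemma~\ref{longarete}), invokes the general fact that for some small $\varepsilon_0$ depending only on the hyperbolicity constant of $Z_\rho$ one has $d_{\varepsilon_0}(x,y)\asymp e^{-\varepsilon_0(x|y)_\rho}$, and bootstraps: $\pi(c(x,y))^{\varepsilon_0}\asymp d_{\varepsilon_0}(x,y)\asymp e^{-\varepsilon_0(x|y)_\rho}$ gives $\pi(c(x,y))\asymp e^{-(x|y)_\rho}$, and $\theta_\rho:=d_1$ is then a genuine metric comparable to $e^{-(\cdot|\cdot)_\rho}$, i.e.\ a visual metric of parameter $1$. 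This sidesteps any combinatorial analysis of $Z_\rho$-geodesics; it is also exactly what makes the visual parameter come out to be $1$. In your approach, Frink's chain lemma applied to $q(x,y)=\pi(c_\alpha(x,y))$ gives a priori only a metric comparable to some power $q^\varepsilon$ with $\varepsilon<1$ determined by the hyperbolicity constant of $Z_\rho$; the edge-length calibration $K_3$ does not by itself force $\varepsilon=1$. To get parameter exactly $1$ one must show chain sums of $\pi(c_\alpha(\cdot,\cdot))$ are bounded below by $\pi(c_\alpha(x,y))$, which is again an application of (H3) --- precisely what the paper packages as $d_1\asymp\pi(c(\cdot,\cdot))$.

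The more serious gap is in your lower bound for $d_\rho(B,B')$. You establish two separate inequalities, namely $d_\rho\geq|B|_\rho+|B'|_\rho-2|B^\star|_\rho$ with $\pi(B^\star)\gtrsim\pi(C)/(h+1)$, and $d_\rho\geq K_3 h$, and assert that "combining the two lower bounds" absorbs the $2\log(h+1)$ loss. But taking the maximum (or any convex combination) of these two bounds cannot produce $d_\rho\geq A-O(1)$ with $A:=|B|_\rho+|B'|_\rho+2\log\pi(C)$: since $A\to\infty$ with $n$ while $h$ stays bounded along the test path, one obtains at best $d_\rho\geq A-O(\log A)$, which does not control the Gromov product up to an additive constant. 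The estimate can be repaired, but it requires using the additive structure of $\ell_\rho(\gamma)$ rather than two competing lower bounds. Write $d_\rho(\gamma)=\sum_{\mathrm{vert}}\ell_\rho(e)+K_3 h$. By (H2), a horizontal edge changes $|\cdot|_\rho$ by at most $\log K_0$, so $\sum_{\mathrm{vert}}\ell_\rho(e)\geq |B|_\rho+|B'|_\rho-2|B^\star|_\rho-h\log K_0$, and hence $d_\rho(\gamma)\geq |B|_\rho+|B'|_\rho-2|B^\star|_\rho+(K_3-\log K_0)h$. Since $K_3\geq 2\log K_0$ gives $K_3-\log K_0\geq K_3/2>0$, the surplus $(K_3-\log K_0)h$ does dominate $2\log(h+1)$ uniformly in $h\geq 0$, and the lower bound $d_\rho(B,B')\geq A-O(1)$ follows. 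This additive decomposition is the crux; the phrase "combining the two lower bounds" obscures it to the point that the argument as written does not go through.
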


The proof of Proposition \ref{controlepsilon} is divided into several lemmas. Recall that for $\varepsilon> 0$, we denote by $\phi_\varepsilon: Z_d\to(0, +\infty)$ the function given by $\phi_\varepsilon(x)=\exp\left(-\varepsilon|x-w|_\rho\right)$ (see (\ref{defmvisuelle})). We have the metric $d_\varepsilon$ on $Z_d$ defined in (\ref{distepsilon}). Also recall that $(Z_d,d_\varepsilon)$ is a non complete bounded metric space. Denote by $\rho_\varepsilon(B):=\rho(B)^\varepsilon$ and $\pi_\varepsilon(B):=\pi(B)^\varepsilon$.

Note that for all $\varepsilon\in (0,1]$, the function $\rho_\varepsilon$ satisfies the hypotheses (H1), (H2) and (H3) of Theorem \ref{description}, with the constants to the power $\varepsilon$, and the hypothesis (H4) holds with $p_\varepsilon:=p/\varepsilon$. In the sequel we will always assume $\varepsilon\in (0,1]$. We first need to estimate the $\rho_\varepsilon$-length of an edge $e$ of the graph $Z_d$.

\begin{lemma}\label{longarete}
There exists a constant $K_4$ such that for any edge $e=(B,B')$ of the graph $Z_d$, we have
\begin{equation}\label{ec:longaretes}
\frac{1}{K_4} \frac{\pi_\varepsilon(B)+\pi_\varepsilon(B')}{2}\leq \ell_\varepsilon(e)\leq K_4 \frac{\pi_\varepsilon(B)+\pi_\varepsilon(B')}{2}.
\end{equation}
\end{lemma}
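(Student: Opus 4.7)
The plan is to compute $\ell_\varepsilon(e)$ by direct integration, treating vertical and horizontal edges separately, since the two cases have quite different behavior of the function $|x - w|_\rho$ along $e$.

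Consider first a \emph{vertical} edge $e = (B, B')$ with $B \in \mathcal{S}_{n+1}$ the child of $B' = g(B)_n \in \mathcal{S}_n$. By Lemma \ref{geodverticales}, $e$ is a geodesic segment of length $\log(1/\rho(B))$ in $(Z_d, |\cdot|_\rho)$, and $|x - w|_\rho$ grows linearly from $|B'|_\rho = \log(1/\pi(B'))$ at the parent endpoint to $|B|_\rho = \log(1/\pi(B))$ at the child. Parametrizing $e$ by $s \in [0, \log(1/\rho(B))]$ and integrating gives, exactly,
\[
\ell_\varepsilon(e) = \int_0^{\log(1/\rho(B))} e^{-\varepsilon(|B'|_\rho + s)}\, ds = \frac{\pi_\varepsilon(B') - \pi_\varepsilon(B)}{\varepsilon}.
\]
Since $\pi_\varepsilon(B) = \rho(B)^\varepsilon \pi_\varepsilon(B')$ and $(\pi_\varepsilon(B) + \pi_\varepsilon(B'))/2 = \pi_\varepsilon(B')(1 + \rho(B)^\varepsilon)/2$, the ratio in \eqref{ec:longaretes} equals $\frac{2(1-\rho(B)^\varepsilon)}{\varepsilon(1 + \rho(B)^\varepsilon)}$. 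Condition (H1) forces $\rho(B)^\varepsilon \in [\eta_-^\varepsilon, \eta_+^\varepsilon] \subset (0,1)$, and a short calculus check shows the function $\varepsilon \mapsto (1 - \eta^\varepsilon)/\varepsilon$ is monotone decreasing on $(0,1]$ for $\eta \in (0,1)$, with values in $[1 - \eta,\, -\log \eta]$. This provides bounds on the ratio that are uniform in $\varepsilon \in (0,1]$.

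For a \emph{horizontal} edge $e = (B, B')$ with $B, B' \in \mathcal{S}_n$, the $|\cdot|_\rho$-length is the constant $K_3$ from Lemma \ref{geodverticales}. Parametrize $e$ isometrically by $t \in [0, K_3]$, and observe that since $B$ and $B'$ are the only ``exit points'' to $w$, we have
\[
|x - w|_\rho = \min\{|B|_\rho + t,\ |B'|_\rho + K_3 - t\}.
\]
Hypothesis (H2) gives $\bigl||B|_\rho - |B'|_\rho\bigr| \leq \log K_0 \leq K_3/2$, so $|x - w|_\rho$ stays in the interval $[\min(|B|_\rho, |B'|_\rho),\, \min(|B|_\rho, |B'|_\rho) + K_3]$. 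Exponentiating yields
\[
e^{-\varepsilon K_3}\,\max\{\pi_\varepsilon(B), \pi_\varepsilon(B')\} \leq \phi_\varepsilon(x) \leq \max\{\pi_\varepsilon(B), \pi_\varepsilon(B')\},
\]
so integrating over $[0, K_3]$ gives $\ell_\varepsilon(e) \asymp K_3 \cdot \max\{\pi_\varepsilon(B), \pi_\varepsilon(B')\}$ uniformly in $\varepsilon \in (0,1]$. Applying (H2) once more, $\pi_\varepsilon(B)/\pi_\varepsilon(B') \in [K_0^{-1}, K_0]$, and therefore $\max\{\pi_\varepsilon(B), \pi_\varepsilon(B')\} \asymp (\pi_\varepsilon(B) + \pi_\varepsilon(B'))/2$ with constants depending only on $K_0$.

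Combining the two cases, one takes $K_4$ to be the maximum of the various constants produced (depending only on $\eta_-$, $\eta_+$, $K_0$, and $K_3$, which in turn depends on those same data via Lemma \ref{geodverticales}). There is no real obstacle here beyond keeping the constants uniform in $\varepsilon \in (0,1]$; the only subtle point is the monotonicity of $(1 - \eta^\varepsilon)/\varepsilon$ used to absorb the factor $1/\varepsilon$ in the vertical case. Note that the analogous estimates for horizontal edges do not require a subtle choice because $\phi_\varepsilon$ is already bounded above and below on $e$ by comparable quantities.
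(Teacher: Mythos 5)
Your proof is correct, but it takes a genuinely different and more elaborate route than the paper's. The paper's argument is uniform over both edge types and does not require exact integration: it merely observes that $\ell_\rho(e)\in[1/K_3,K_3]$ and $\bigl||z|_\rho-|B|_\rho\bigr|\leq K_3$ for every $z\in e$, whence $\phi_\varepsilon(z)\in[\pi_\varepsilon(B)e^{-\varepsilon K_3},\pi_\varepsilon(B)e^{\varepsilon K_3}]$; integrating gives $\ell_\varepsilon(e)\asymp\pi_\varepsilon(B)$, and by symmetry also $\asymp\pi_\varepsilon(B')$, which together yield \eqref{ec:longaretes} with $K_4$ depending only on $K_3$ (and uniformly bounded over $\varepsilon\in(0,1]$ since $e^{\varepsilon K_3}\leq e^{K_3}$). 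Your vertical-edge computation of the exact value $\ell_\varepsilon(e)=\bigl(\pi_\varepsilon(B')-\pi_\varepsilon(B)\bigr)/\varepsilon$ and the subsequent analysis of $(1-\eta^\varepsilon)/\varepsilon$ are correct and give tighter constants, and the monotonicity check is exactly what is needed to absorb the $1/\varepsilon$ factor uniformly; your horizontal-edge argument via the explicit formula $|x-w|_\rho=\min\{|B|_\rho+t,\,|B'|_\rho+K_3-t\}$ is also valid (and the stated containment $|x-w|_\rho\in[\min,\min+K_3]$ indeed holds, with or without the (H2) cushion you invoke). What your approach buys is explicitness of the constants and a cleaner separation of the two geometries; what it costs is that you must verify a small calculus fact and maintain two separate cases, whereas the paper's crude bound already suffices and covers both cases with one stroke.
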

\begin{proof}
Let $e=(B,B')$ be an edge of $Z_d$. Since $1/K_3\leq\ell_\rho(e)\leq K_3$ and $\left||z|_\rho-|B|_\rho\right|\leq K_3$ for all $z\in e$, we have
\begin{equation*}
\pi_\varepsilon(B)\frac{1}{K_3}\exp(-\varepsilon K_3)\leq \int_e\exp\left(-\varepsilon |z|_\rho\right)ds\leq  \pi_\varepsilon(B)K_3\exp(\varepsilon K_3).
\end{equation*}
Thus $K_4^{-1}\pi_\varepsilon(B)\leq \ell_\varepsilon(e)\leq K_4\pi_\varepsilon(B)$, where $K_4$ is a constant which depends only on $K_3$ and $\varepsilon$. In the same way we obtain $\ell_\varepsilon(e)\asymp \pi_\varepsilon(B')$. This show (\ref{ec:longaretes}).
\end{proof}

\begin{lemma}\label{bornesup} Let $x$ and $y$ be two points in $X$ and let $m=|c(x,y)|$. Then for all $n\geq m$, we have 
\begin{equation}\label{eqbornesup} d_\varepsilon(B,B')\lesssim \pi_\varepsilon\left(c(x,y)\right),\end{equation} where $B$ and $B'$ are elements in $\mathcal{S}_n$ that contain $x$ and $y$ respectively.
\end{lemma}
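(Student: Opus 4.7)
The plan is to construct an explicit path in $Z_d$ from $B$ to $B'$ that rises vertically to level $m$, traverses a short horizontal piece at that level through the center $C$, and descends vertically to $B'$; then estimate the $\varepsilon$-length of each piece using (H1), (H2), and Lemma \ref{longarete}.

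\medskip
\noindent\textbf{Step 1: Locating the level-$m$ ancestors.} Let $C\in c(x,y)$ realize $\pi(C)=\pi(c(x,y))$, and denote by $\tilde B=g(B)_m$, $\tilde B'=g(B')_m$ the ancestors of $B,B'$ at level $m$. From $x\in B\cap \alpha\cdot C$ and part (i) of Lemma \ref{propgraphe} we obtain
\[
d(x_{\tilde B},x_C)\leq d(x_{\tilde B},x_B)+d(x_B,x)+d(x,x_C)\leq \kappa\tau r_m+\kappa r_n+\alpha\kappa r_m\lesssim r_m,
\]
and similarly $d(x_{\tilde B'},x_C)\lesssim r_m$. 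In particular $d(x_{\tilde B},x_{\tilde B'})\lesssim r_m$.

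\medskip
\noindent\textbf{Step 2: A short horizontal path at level $m$.} Since $X$ is doubling, the number of elements of $\mathcal{S}_m$ whose centers lie in a ball of radius a fixed multiple of $r_m$ around $x_C$ is bounded by a constant $N=N(\kappa,\alpha,a,K_D)$. Using the doubling/uniform-perfect geometry together with part (iv) of Lemma \ref{propgraphe} one produces a horizontal path $\tilde B=C_0\sim C_1\sim\cdots\sim C_\ell=\tilde B'$ in $G_m$ with $\ell\leq N$ and $C\in\{C_0,\dots,C_\ell\}$.

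\medskip
\noindent\textbf{Step 3: Length of the vertical parts.} For the vertical geodesic $B=B_n,B_{n-1},\dots,B_m=\tilde B$, hypothesis (H1) gives $\pi(B_k)\leq \eta_+^{n-k}\pi(\tilde B)$, hence
\[
\pi_\varepsilon(B_k)\leq \eta_+^{\varepsilon(n-k)}\,\pi_\varepsilon(\tilde B).
\]
Lemma \ref{longarete} bounds the $\varepsilon$-length of each edge by $K_4\pi_\varepsilon(B_k)$, so summing the geometric series
\[
\sum_{k=m+1}^{n}\ell_\varepsilon\bigl((B_k,B_{k-1})\bigr)\leq K_4\sum_{k=m+1}^{n}\eta_+^{\varepsilon(n-k)}\pi_\varepsilon(\tilde B)\cdot \eta_+^{-\varepsilon(n-k)}\cdots
\]
I would more cleanly bound $\pi_\varepsilon(B_k)\leq \eta_+^{\varepsilon(k-m)}\pi_\varepsilon(\tilde B)$ (since going from $\tilde B$ down to $B_k$ multiplies $\pi$ by a factor $\leq \eta_+^{k-m}$), and get
\[
\sum_{k=m+1}^{n}\pi_\varepsilon(B_k)\leq\frac{\eta_+^{\varepsilon}}{1-\eta_+^{\varepsilon}}\pi_\varepsilon(\tilde B)\lesssim \pi_\varepsilon(\tilde B).
\]
The same estimate applies to the descending vertical piece $\tilde B'\to B'$, giving a contribution $\lesssim \pi_\varepsilon(\tilde B')$.

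\medskip
\noindent\textbf{Step 4: Length of the horizontal part and conclusion.} The horizontal chain has at most $N$ edges, all at level $m$. By (H2) applied iteratively, $\pi(C_i)\asymp \pi(C_j)$ for all $i,j$, with comparison constant $K_0^N$; hence each $\pi_\varepsilon(C_i)\asymp \pi_\varepsilon(C)$. Lemma \ref{longarete} then gives that each horizontal edge has $\varepsilon$-length $\lesssim \pi_\varepsilon(C)$, so the total horizontal contribution is $\lesssim N K_0^{N\varepsilon}\pi_\varepsilon(C)\lesssim \pi_\varepsilon(C)$. Since $\tilde B$ and $\tilde B'$ are endpoints of this chain we also get $\pi_\varepsilon(\tilde B),\pi_\varepsilon(\tilde B')\asymp \pi_\varepsilon(C)=\pi_\varepsilon(c(x,y))$. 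Summing the three pieces yields a curve from $B$ to $B'$ of $\varepsilon$-length $\lesssim \pi_\varepsilon(c(x,y))$, proving \eqref{eqbornesup}.

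\medskip
The only mild subtlety is Step 2, ensuring that $\tilde B$ and $\tilde B'$ can indeed be connected by a horizontal chain of uniformly bounded length in $G_m$; this is where the doubling property, the bound $d(x_{\tilde B},x_{\tilde B'})\lesssim r_m$ from Step 1, and part (iv) of Lemma \ref{propgraphe} combine. Once that combinatorial bound is in hand, the rest is a routine geometric-series estimate controlled by (H1) and (H2).
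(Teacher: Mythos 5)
Your overall plan (rise vertically from $B$ to level $m$, cross a horizontal bridge through the center $C$, descend to $B'$, and estimate $\varepsilon$-length by a geometric series plus a bounded number of horizontal edges) is exactly the paper's. Steps 1 and 3 are fine and match the paper's computations.

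The weak point is Step 2. You assert that ``doubling/uniform-perfect geometry together with part (iv) of Lemma \ref{propgraphe}'' produces a horizontal chain of bounded length $\ell\leq N$ from $\tilde B$ to $\tilde B'$ through $C$ in $G_m$. This is not automatic: doubling bounds the \emph{number} of elements of $\mathcal{S}_m$ near $x_C$, but says nothing about the connectivity of $G_m$ among them (for a very disconnected $X$, $G_m$ can have many components even among nearby vertices). Lemma \ref{propgraphe}(iv) also does not give this; it relates elements at consecutive levels, not the graph structure within a single level. So as written, Step 2 is a gap.

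What saves you --- and what the paper does --- is already contained in your Step 1 once you stop absorbing constants into $\lesssim$. You have
\[
d(x_{\tilde B},x_C)\leq d(x_{\tilde B},x_B)+d(x_B,x)+d(x,x_C)\leq \kappa\tau r_m+\kappa r_n+\alpha\kappa r_m\leq \kappa\left(\tau+1+\alpha\right)r_m,
\]
and since $\tau<2$ by \eqref{constantes}, $\alpha\leq\lambda/4$, and $\lambda\geq 32$, this is at most $\kappa(3+\lambda/4)\,r_m\leq\lambda\kappa r_m<2\lambda\kappa r_m$, so $\lambda\cdot\tilde B\cap\lambda\cdot C\neq\emptyset$ and hence $\tilde B\sim C$ is a single horizontal edge; similarly $\tilde B'\sim C$. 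Thus the horizontal bridge is always exactly two edges, $N=2$, and no combinatorial connectivity argument is needed. Replace Step 2 by this explicit bound and the rest of your proof goes through and coincides with the paper's.
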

\begin{proof}
Let $B$ and $B'$ be as in the statement of the lemma and let $C\in c(x, y)$. Consider the geodesic segments $g_1=[B_m, B]$ and $g_2=[B'_m,B']$, where we write $B_m=g(B)_m\in\mathcal{S}_m$ and $B'_m=g(B')_m\in\mathcal{S}_m$. To simplify the notation, we write $x_m$ and $y_m$ for the centers of $B_m$ and $B_m'$ respectively. Then
\begin{align*}d(x_m,x_C) &\leq d(x_m,x_B)+d(x_B,x)+d(x,x_C)\\
&\leq \kappa\left(\tau+1+\alpha\right)a^{-m}\leq \kappa\left(3+\lambda/4\right)a^{-m}\leq \lambda\kappa a^{-m},\end{align*}
where the last inequality follows from the fact that $\lambda>4$. So $e=(B_m, C)$ is a horizontal edge of $Z_d$. Similarly $(B'_m,C)$ is an edge of $Z_d$.

Set $\gamma$ to be the curve of $Z_d$, which joins $B$ and $B'$, given by
\begin{equation}\label{courbemin}
\gamma:=\left[B,B_m\right]\ast \left(B_m,C\right)\ast \left(C,B_m\right)\ast \left[B'_m,B'\right].
\end{equation}
We also write $B_i=g(B)_i$ and $B'_i=g(B')_i$ for $i=m+1,\ldots, n-1$. Then, by Lemma \ref{longarete}, we can bound from above the $\rho_\varepsilon$-length of $\gamma$ by
\begin{align*}
\ell_\varepsilon\left(\gamma\right)\leq &\ \ell_\varepsilon\left(g_1\right)+\ell_\varepsilon\left((B_m,C)\right)+\ell_\varepsilon\left((C,B'_m)\right)+\ell_\varepsilon\left(g_2\right)\\
\leq &\ K_4\cdot\left(\sum\limits_{i=m}^{n-1}\pi(B_i)^\varepsilon+2\pi(C)^\varepsilon+\sum\limits_{i=m}^{n-1}\pi(B'_i)^\varepsilon\right)\\
\leq &\ K_4\cdot\left(\pi(B_m)^\varepsilon\cdot\sum\limits_{i=m}^{n-1}(\eta_+^\varepsilon)^i+2\pi(C)^\varepsilon+\pi(B'_m)^\varepsilon\sum\limits_{i=m}^{n-1}(\eta_+^\varepsilon)^i\right)\lesssim \pi(C)^\varepsilon,
\end{align*}
where the last inequality follows from (\ref{czero}). This implies (\ref{eqbornesup}).
\end{proof}

\begin{figure}
\centering
\setlength{\unitlength}{1cm}
\begin{picture}(11,9.5)
\includegraphics[scale=0.9]{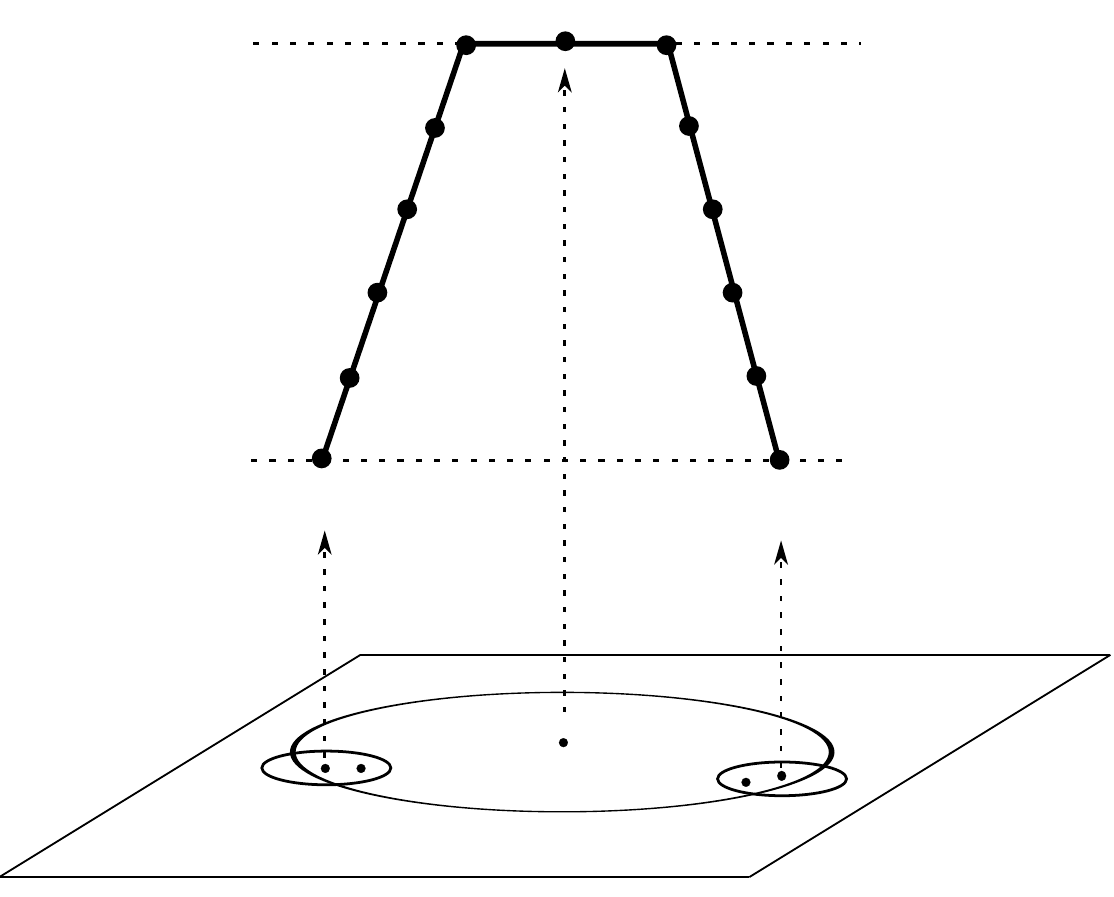}
\put(-8.3,3.9){\footnotesize $n$}
\put(-8.3,7.7){\footnotesize $m$}
\put(-5.1,1){\footnotesize $x_C$}
\put(-4.9,0.4){\footnotesize $\alpha\cdot C$}
\put(-2.3,0.4){\footnotesize $X$}
\put(-5.5,8.1){\scriptsize $c_\alpha(x,y)$}
\put(-6.2,8.1){\scriptsize $B_m$}
\put(-4,8.1){\scriptsize $B'_m$}
\put(-7,6){\footnotesize $\gamma$}
\put(-7.4,3.6){\scriptsize $B$}
\put(-3.1,3.6){\scriptsize $B'$}
\put(-6.7,1.3){\footnotesize $x$}
\put(-3.5,1.3){\footnotesize $y$}
\end{picture}
\caption{\label{fig:figurecurvecand}
The curve $\gamma$ of $Z_\rho$ is minimizing, up to a multiplicative constant, for the length $\ell_\varepsilon$.}
\end{figure}

\begin{proof}[Proof of Proposition \ref{controlepsilon}]
Let $x$ and $y$ be two points in $X$, from (H3) and Lemma \ref{longarete}, there exists $n_0 $ such that if $n\geq n_0$ and if $\gamma=\left\{\left(B_i,B_{i +1}\right)\right\}_{i=1}^{N-1}$ is a curve with $B_1=B$ and $B_N= B'$, where $B$ and $B'$ are elements in $\mathcal{S}_n$ such that $x\in B$ and $y\in B'$, then
$$\ell_\varepsilon\left(\gamma\right)\gtrsim\sum\limits_{i = 1}^{N-1}\frac{\pi_\varepsilon(B_i) + \pi_\varepsilon(B_{i +1})}{2}\geq\frac{1}{2}L_{\rho_\varepsilon}\left(\gamma\right)\geq\frac{1}{2C_1^\varepsilon}\cdot\pi_\varepsilon\left(c(x,y)\right).$$
This and Lemma \ref{bornesup} imply that for any $\varepsilon\in (0,1]$, the boundary at infinity
$\partial_\varepsilon Z_d$ is homeomorphic to $X$. Moreover, for all $x,y\in X$, we have
$$d_\varepsilon(x,y)\asymp \pi(c(x,y))^\varepsilon,$$
where $c(x,y)$ is the center of $x$ and $y$ in $Z_d$. On the other hand, we know that there exists $\varepsilon_0>0$ small enough, depending only on the hyperbolicity constant of $Z_\rho$, such that for all $x, y\in X$, we have
$$d_{\varepsilon_0}(x,y)\asymp e^{-\varepsilon_0(x | y)_\rho}.$$
But then, for $\varepsilon=1$, we obtain
$$d_1(x,y)^{\varepsilon_0}\asymp\pi(c (x, y))^{\varepsilon_0}\asymp d_{\varepsilon_0}(x, y)\asymp e^{-\varepsilon_0(x|y)_\rho}.$$
That is, $\theta_\rho=d_1$ is a visual metric and in addition $\pi(c(x,y))\asymp e^{-(x|y)_\rho}$. This finish the proof of the proposition.
\end{proof}

\begin{remark}
This proposition can be interpreted as an analogue of the Gehring-Hayman theorem for Gromov-hyperbolic spaces (see Theorem 5.1 of \cite{BHK}). The assumption (H2) is equivalent to a Harnack type inequality. The proposition says that geodesics of $Z_\rho$ are minimizers, up to a multiplicative constant, for the length $\ell_\varepsilon$. Indeed, given two points $x,y\in X$, if $n\geq m=\left|c(x,y)\right|$ and $B,B'\in\mathcal{S}_n$ are such that $x\in B$ and $y\in B'$, then the curve $\gamma=\left[B, B_m\right]\ast\left(B_m, C\right)\ast\left(C,B'_m\right)\ast\left[B'_m,B'\right]$, where $C\in c(x,y)$ and $B_m,B'_m\in\mathcal{S}_m$ are the parents of $B$ and $B'$ respectively, has an $\varepsilon$-length comparable to $\pi_\varepsilon\left(c(x,y)\right)$. Therefore, this curve is minimizing up to a multiplicative constant ---for $n\geq n_0 $. The important point here is the fact that one can control the visual parameter $\varepsilon$ using the hypothesis (H3). See Figure \ref{fig:figurecurvecand}.
\end{remark}

The third step is to show that for the distance $\theta_\rho$, the $p$-dimensional Hausdorff measure is Ahlfors regular. We will use the assumption (H4) to construct a measure $\mu$ on $X$ which is comparable to the $p$-dimensional Hausdorff measure.

Let $\omega:\mathcal{S}\to (0, +\infty)$ be given by $\omega(B)=\rho(B)^p$. We can define by induction a sequence of purely atomic measures $\mu_n$, with atoms on $X_n$ the centers of the elements in $\mathcal{S}_n$, by setting $\mu_0(x_w)=1$ for the sole point $w\in\mathcal{S}_0$, and
$$\mu_{n +1}(x_B)=\omega(B)\mu_n(x_{B'}),$$ for $B\in T_n(B')$ and $B'\in\mathcal{S}_n$. That is, $\mu_n(x_B)=\pi(B)^p$ if $B\in\mathcal{S}$, and if we denote $\delta_x$ the Dirac measure at $x$, we can write
$$\mu_n=\sum\limits_{B\in\mathcal{S}_n}\pi(B)^p\delta_{x_B}.$$
Recall that we write $X(B)$ for the set of centers of the elements $B'\in D(B)$. Note that according to (H4), for all $n\geq 0$, $B\in\mathcal{S}_n$ and $l\geq n$, we have
$$\frac{1}{K_2}\mu_n(x_B)\leq\mu_l\left(X(B)\right)\leq K_2\mu_n(x_B).$$
In particular, we have $K_2^{-1}\leq \mu_n(X)\leq K_2$ for all $n\geq 0$. Moreover, according to (H1) and (H2):
\begin{enumerate}
\item[(i)] There exists a constant $c\in(0,1)$ such that $c\leq\omega(B)\leq 1$ for all $B\in\mathcal{S}$.
\item[(ii)] There exists a constant $K_0'=K_0^p\geq 1$ such that if $B,B'\in\mathcal{S}_n$ satisfy $B\sim B'$, then
\begin{equation}\label{h2mu}
\frac{\mu_n(x_B)}{\mu_n(x_{B'})}\leq K_0'.
\end{equation}
\end{enumerate}
Let $\mu$ be any weak limit of the sequence $\mu_n$. More precisely, there exists a subsequence $\mu_{n_i}$ which weakly converges to the measure $\mu$ on $X$. To simplify notation, we remove the sub-index $i$.

\begin{lemma}
Let $x$ and $y$ be two points in $X$ and let $r=d(x,y)$. Then 
\begin{equation}\label{muvspi}
\mu\left(B(x,r)\right)\asymp \pi\left(c(x,y)\right)^p.
\end{equation}
\end{lemma}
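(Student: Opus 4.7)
The plan is to bound $\mu(B(x,r))$ above and below by $\pi(C)^p$ for any $C \in c_\alpha(x,y)$, which by (H2) is comparable to $\pi(c_\alpha(x,y))^p$ since any two elements of $c_\alpha(x,y)$ are horizontal neighbors in $Z_d$ (their centers lie within $2\alpha\kappa r_m \leq \lambda\kappa r_m/2$). Setting $m := |C|$, the maximality in the definition of the center gives $r \asymp r_m$ with constants depending on $\alpha$ and $a$. Both estimates will be extracted from the weak convergence $\mu_n \to \mu$ via Portmanteau: $\mu(U) \leq \liminf_n \mu_n(U)$ for $U$ open and $\mu(F) \geq \limsup_n \mu_n(F)$ for $F$ closed.

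For the upper bound I would apply Portmanteau directly to $B(x,r)$ and then, for each $n$, group the atoms $B \in \mathcal{S}_n$ with $x_B \in B(x,r)$ by their level-$m$ ancestor $B_0 = g(B)_m$. Using Lemma \ref{propgraphe}(i) to bound $d(x_B, x_{B_0})$, combined with $d(x_B,x) < r$ and $d(x,x_C) \leq \alpha\kappa r_m$, a triangle estimate yields $d(x_{B_0}, x_C) < 2\lambda\kappa r_m$ (thanks to $\alpha \leq \lambda/4$ and $\lambda \geq 32$), hence $B_0 \sim C$ in $Z_d$. Then (H2) gives $\pi(B_0) \leq K_0 \pi(C)$; the doubling property of $X$ together with (\ref{almostball2}) bounds the number of such ancestors by a universal constant $N$; and (H4) gives $\sum_{B \in D_n(B_0)} \pi(B)^p \leq K_2 \pi(B_0)^p$. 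These combine to $\mu_n(B(x,r)) \leq N K_2 K_0^p \pi(C)^p$ uniformly in $n$, and Portmanteau concludes.

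For the lower bound, Portmanteau does not apply directly to $B(x,r)$ in the correct direction, so I would produce a closed set $F \subset B(x,r)$ with $\mu_n(F) \gtrsim \pi(C)^p$ for large $n$. Fix an integer $k \geq 1$, depending only on the structural constants $\kappa,\tau,\alpha,a$, large enough so that any $B \in \mathcal{S}_{m+k}$ containing $x$ satisfies $\overline{X(B)} \subset \overline{B}(x_B, \kappa \tau r_{m+k}) \subset B(x,r)$; this is possible because $r \gtrsim r_{m+1}$ while $r_{m+k}$ shrinks geometrically, and Lemma \ref{propgraphe}(i) controls the positions of descendants. Such a $B$ exists since $\mathcal{S}_{m+k}$ covers $X$; its ancestor $B_0 = g(B)_m$ is again a horizontal neighbor of $C$, so (H2) yields $\pi(B_0) \asymp \pi(C)$, and (H1) applied to the $k$ factors between levels $m$ and $m+k$ gives $\pi(B) \asymp \pi(B_0) \asymp \pi(C)$ with constants depending only on $k$. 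Setting $F := \overline{X(B)}$, Portmanteau and (H4) yield
$$\mu(B(x,r)) \geq \mu(F) \geq \limsup_n \sum_{B' \in D_n(B)} \pi(B')^p \geq K_2^{-1}\pi(B)^p \asymp \pi(C)^p,$$
completing the proof. The only real subtlety is this asymmetry between open and closed sets in Portmanteau, which forces the passage from level $m$ to level $m+k$ in the lower bound; everything else is bookkeeping with (H1), (H2), (H4), the doubling property, and Lemma \ref{propgraphe}(i).
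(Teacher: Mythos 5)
Your proposal follows essentially the same route as the paper: both bound $\mu_n(B(x,r))$ above by grouping atoms by their level-$m$ ancestors (the paper via Lemma \ref{propgraphe}(iii), you via (i), but the bookkeeping is the same), both produce the lower bound by fixing a small-scale ball $B$ near $x$ a couple of levels below $m$ (the paper uses $m+2$, you a generic $m+k$) so that $\overline{X(B)}$ sits inside $B(x,r)$, and both use the open/closed asymmetry of weak convergence plus (H1), (H2), (H4) exactly as you describe. The argument is correct and the constants track through just as in the paper.
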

\begin{proof}
Let $m=|c(x,y)|$ so that $(\alpha-1)\kappa r_{m +1}<r\leq 2\alpha\kappa r_{m}$ holds. Let's start with the lower bound. There exists $B_1\in\mathcal{S}_{m +2}$ such that $d(x,x_{B_1})<\kappa r_{m +2}$. Indeed, the balls $B\left(x_B, \kappa r_{m +2}\right)$, with $B\in\mathcal{S}_{m+2}$, form a covering of $X$. For simplicity, write $x_1:=x_{B_1}$. According to the inclusion (\ref{inclusion1}), we have $X(B_1)\subset B\left(x,\frac{r}{2}\right)$, because $r_{m+2}=a^{-(m+2)}<r$. Therefore, for all $n\geq m+2$, we obtain
\begin{equation}\label{mukinf}
\mu_n\left(\overline{B}\left(x,\frac{r}{2}\right)\right)\geq\mu_n(X(B_1))\geq K_2^{-1}\mu_{m+2}(z_1).
\end{equation}
Take $B\in\mathcal{S}_m$ such that $d(x,x_B)\geq r+ 2\kappa r_{m}$. By property (\ref{inclusion2}), we know that $X(B)\cap B(x,r)=\emptyset$. This implies that for all $n\geq m$, if an element $B$ of $\mathcal{S}_n$ is such that $x_B\in B(x,r)$, then its $m$-generation parent $g(B)_m$ belongs to $B(x,r+2\kappa r_{m})$. Thus,
\begin{equation}\label{muksup}
\mu_n\left(B(x,r)\right)\leq K_2\mu_m\left(B(x,r+2\kappa r_{m})\right).
\end{equation}
Making $n\to +\infty$ (in the subsequence $n_i$), from (\ref{mukinf}) and (\ref{muksup}), one concludes that
\begin{align}\label{musup}
\mu\left(B(x,r)\right)&\leq\liminf \mu_n\left(B(x,r)\right)\leq K_2\mu_m\left(B(x,r+2\kappa r_{m})\right)\text{ and }\\ \label{muinf}
\mu(B(x,r))&\geq\mu\left(\overline{B}\left(x,\frac{r}{2}\right)\right)\geq\limsup \mu_n\left(\overline{B}\left(x,\frac{r}{2}\right)\right)\geq K_2^{-1}\mu_{m+2}(x_1).
\end{align}
Let $Y=X_m\cap B(x,r+2\kappa r_{m})$ and let $B_2=g(B_1)_m$; we denote $x_2$ the center of $B_2$. On the one hand, recall that from (i) $\omega\geq c$, so we have $\mu_{m +2}(z_1)\geq c^{2}\mu_m(z_2)$. Moreover, the cardinal number of $Y$ is uniformly bounded by a constant $M$, which depends only on the doubling constant of $X$, so
$$\mu_m\left(B(x,r+2\kappa r_{m})\right)=\sum\limits_{z\in Y}\mu_m(z)\leq M\max\left\{\mu_m(z):z\in Y\right\}.$$
It remains to compare $\mu_m(z)$ with $\mu_m(x_2)$ for all $z\in Y$. If $z\in Y$, then
$$d(z, x_2)\leq d(z, x)+d(x,x_1)+d(x_1,x_2)\leq r+2\kappa a^{-m}+\kappa a^{-(m+2)}+\kappa\epsilon_m$$
$$\leq \kappa\left(2\alpha+3+\tau\right)a^{-m}\leq \lambda\kappa a^{-m},$$
where the last two inequalities are true by (\ref{constantes}) and the choice of $\lambda$.
Thus, according to item (ii), there exists a constant $K_0'\geq 1$ such that $\mu_m(z)\leq K_0'\mu_m(x_2)$ for all $z\in Y$. Therefore, we obtain
\begin{equation}\label{muposta}
c^2K_2^{-1}\mu_m(x_2)\leq K_2^{-1}\mu_{m+2}(x_1)\leq\mu\left(B(x,r)\right)\leq M\cdot K_0'\cdot K_2\cdot \mu_m(x_2).
\end{equation}
Let $z_0$ be the center of an element $C\in c(x,y)$, then
$$d(x_2,z_0)\leq d(x_2,x_1)+d(x_1,x)+d(x,z_0)\leq \epsilon_m+ a^{-(m+2)}+\alpha a^{-m}\leq \lambda a^{-m},$$
so $B_2\sim C$; recall that $B_2\sim C$ means that $B_2$ and $C$ are the ends of a horizontal edge. According to (H2), we have
\begin{equation}\label{muvscentro}
\pi(B_2)^p\asymp \pi(C)^p\asymp \pi\left(c(x,y)\right)^p.
\end{equation}
Finally, (\ref{muposta}) and (\ref{muvscentro}) imply (\ref{muvspi}).
\end{proof}

\begin{proposition}[Ahlfors regularity]\label{ahregular}
The $p$-dimensional Hausdorff measure of the distance $\theta_\rho$ of Proposition \ref{controlepsilon} is regular.
\end{proposition}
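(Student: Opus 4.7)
The plan is to verify that the measure $\mu$ constructed in the previous lemma is Ahlfors $p$-regular with respect to $\theta_\rho$-balls; once this is established, the standard fact that a Borel measure satisfying such an estimate is comparable to $\mathcal{H}^p_{\theta_\rho}$ yields the Ahlfors regularity of the Hausdorff measure itself.

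Fix $x \in X$ and $0 < s \leq \diam_{\theta_\rho} X$; the goal is $\mu(B_{\theta_\rho}(x,s)) \asymp s^p$. Since uniform perfectness is a quasisymmetric invariant, $(X,\theta_\rho)$ is uniformly perfect, and one can select $y \in X$ with $\theta_\rho(x,y) \asymp s$. Proposition \ref{controlepsilon} yields $\pi(c(x,y)) \asymp s$, so it suffices to show $\mu(B_{\theta_\rho}(x,s)) \asymp \pi(c(x,y))^p$.

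Next I would transfer the question from $\theta_\rho$-balls to $d$-balls via the distortion inequality \eqref{distordiam} applied to $id:(X,d)\to(X,\theta_\rho)$. A standard consequence of quasisymmetry together with uniform perfectness in both metrics produces constants $0 < c_- \leq c_+$, depending only on the distortion function $\eta$ and on $K_P$, with
$$B_d(x,\, c_- d(x,y))\ \subset\ B_{\theta_\rho}(x, s)\ \subset\ B_d(x,\, c_+ d(x,y)).$$
Using uniform perfectness of $(X,d)$, I would choose points $y_\pm \in X$ with $d(x, y_\pm) \asymp c_\pm d(x,y)$; the preceding lemma then gives $\mu(B_d(x, d(x,y_\pm))) \asymp \pi(c(x,y_\pm))^p$. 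Because $d(x,y_\pm) \asymp d(x,y)$, the levels $|c(x,y_\pm)|$ and $|c(x,y)|$ differ by at most a bounded integer, and hypothesis (H1) forces $\pi$ to change by at most a bounded multiplicative factor across such a level shift. Hence $\pi(c(x,y_\pm))^p \asymp \pi(c(x,y))^p \asymp s^p$, providing both required inequalities.

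The principal technical hurdle is the sandwich inclusion: one must verify that $c_\pm$ are genuinely uniform in $x$ and $s$, depending only on $\eta$ and $K_P$. The argument proceeds by applying \eqref{distordiam} to a pair of well-chosen sets (for instance, $\{x,y\}$ inside $B_{\theta_\rho}(x,s)$) and invoking uniform perfectness at both the $\theta_\rho$-radius $s$ and the $d$-radius $d(x,y)$ to convert the $\theta_\rho$-ratio bound into a genuine $d$-diameter bound. After that, the passage from Ahlfors regularity of $\mu$ to that of $\mathcal{H}^p_{\theta_\rho}$ is a routine measure-theoretic statement and completes the proof.
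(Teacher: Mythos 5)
Your proposal is correct and rests on the same two pillars as the paper's proof: the comparison $\mu\bigl(B_d(x,r)\bigr)\asymp\pi\bigl(c(x,y)\bigr)^p$ from the preceding lemma (eq.\,\eqref{muvspi}) and a sandwich of the $\theta_\rho$-ball between two $d$-balls. The route you take to the sandwich, however, is more circuitous than the paper's. You invoke the quasisymmetric distortion inequality \eqref{distordiam} together with uniform perfectness in both metrics to produce uniform constants $c_\pm$, and then pick auxiliary points $y_\pm$ via uniform perfectness in $(X,d)$. The paper instead defines the radii $r_0,r_1$ extremally, as $r_0=\min\{d(w,x):\theta_\rho(w,x)\geq r\}$ and $r_1=\max\{d(w,x):\theta_\rho(w,x)\leq r\}$; this makes the inclusions $B_d(x,r_0)\subset B_{\theta_\rho}(x,r)\subset\overline{B}_d(x,r_1)$ entirely tautological, with no appeal to the distortion function, and the witnesses $y_0,y_1$ come for free with $\theta_\rho(y_1,x)\leq r\leq\theta_\rho(y_0,x)$. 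Both approaches then conclude via $\mu\bigl(B_d(x,r_i)\bigr)\asymp\pi\bigl(c(x,y_i)\bigr)^p\asymp\theta_\rho(y_i,x)^p$. Your version is sound, just less economical.

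One step in your write-up is imprecise as stated. You argue that $\pi\bigl(c(x,y_\pm)\bigr)\asymp\pi\bigl(c(x,y)\bigr)$ because the levels differ by $O(1)$ and ``(H1) forces $\pi$ to change by at most a bounded multiplicative factor across such a level shift.'' But $c(x,y_\pm)$ and $c(x,y)$ are not ancestor--descendant in the tree $T$; they are distinct vertices at nearby levels near $x$. Controlling the ratio requires both a vertical move, governed by (H1), and a horizontal move between neighbours at a common level, which is where (H2) enters. The cleanest fix is to bypass the level bookkeeping entirely: since $d(x,y_\pm)\asymp d(x,y)$, quasisymmetry of $id:(X,d)\to(X,\theta_\rho)$ gives $\theta_\rho(x,y_\pm)\asymp\theta_\rho(x,y)\asymp s$, and Proposition \ref{controlepsilon} then yields $\pi\bigl(c(x,y_\pm)\bigr)\asymp\theta_\rho(x,y_\pm)\asymp s$ directly. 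With that substitution your argument closes with no gaps.
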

\begin{proof}
We show that $(X,\theta,\mu)$ is $p$-regular. Write $B_d$ and $B_\theta$ for the balls in the metric $d$ and $\theta_\rho$ respectively. Let $x\in X$ and $0<r<1$, we take $y_0$ and $y_1$ in $X$ such that
\begin{align*}
r_0&=d(y_0,x)=\min\left\{d(w,x):\theta_\rho(w,x)\geq r\right\},\\
r_1&=d(y_1,x)=\max\left\{d(w,x):\theta_\rho(w,x)\leq r\right\}.
\end{align*}
So we have
$$B_d(x,r_0)\subset B_\theta(x,r)\subset B_d(x,r_1).$$
Since $\theta_\rho(y_i,x)^p\asymp \pi\left(c(x,y_i)\right)^p\asymp \mu\left(B_d(x,r_i)\right)$ by (\ref{muvspi}) and $\theta_\rho(y_1,x)\leq r\leq\theta_\rho(y_0,x)$, we obtain
$$r^p\lesssim \pi\left(c(x,y_0)\right)^p\asymp\mu\left(B_d(x,r_0)\right)\leq\mu\left(B_\theta(x,r)\right)\leq \mu\left(B_d(x,r_1)\right)\asymp \pi\left(c(x,y_1)\right)^p\lesssim r^p.$$
Then $\mu\left(B_\theta(x,r)\right)\asymp r^p$. This proves the proposition.
\end{proof}

\subsection{Dimension control and simplification of the hypotheses}\label{sec:controldim}

We end this section by simplifying the hypothesis of Theorem \ref{description} to facilitate its application in
concrete situations, like in the following sections. We always assume that $X$ is a compact, doubling and uniformly perfect metric space. We continue to assume that $a$ and $\lambda$ verify (\ref{constantes}). 

We need some notation. Let $\gamma=\{(B_i,B_{i +1})\}_{i=1}^{N-1}$ be a path of edges in $Z_d$, we say that $\gamma$ is a \emph{horizontal path of level $k\geq1$} if $B_i\in\mathcal{S}_k$ for all $i=1,\ldots,N$. We adopt the convention that for such a path $\gamma$ the point $z_i\in X$ denotes the center of $B_i$ for $i=1,\ldots,N$. Denote by $\Gamma_{k +1}\left(B\right)$, where $B\in\mathcal{S}_k$ and $k\geq 0$, the family of horizontal paths $\gamma=\left\{\left(B_i,B_{i+1}\right)\right\}_{i=1}^{N-1}$ of level $k+1$ such that $z_{1}\in B$, $z_{i}\in 2\cdot B$ for $i=2,\ldots,N-1$ and $z_{N}\in X\setminus 2\cdot B$. The aim is to show the following theorem.

\begin{theorem}[Dimension control]\label{simplification}
Let $p> 0$. There exists $\eta_0\in(0,1)$, which depends only on $p$, $\lambda$, $\kappa$ and the doubling constant of $X$, such that if there exists a function $\sigma:\mathcal{S}\to\R_+$ which verifies:
\begin{itemize} 
\item[(S1)] for all $B\in\mathcal{S}_k$ and $k\geq 0$, if $\gamma=\left\{B_i\right\}_{i=1}^N$ is a path in $\Gamma_{k+1}(B)$, then
\begin{equation}
\sum\limits_{i=1}^N\sigma\left(B_i\right)\geq 1,
\end{equation}
\item[(S2)] and for all $k\geq 0$ and all $B\in\mathcal{S}_k$, we have
\begin{equation}\label{sumachica}
\sum\limits_{B'\in T_k(B)}\sigma(B')^p\leq \eta_0,
\end{equation}
\end{itemize}
then there exists an Ahlfors regular distance $\theta\in\mathcal{J}\left(X, d\right)$ of dimension $p$. Therefore, the Ahlfors regular conformal dimension of $X$ is smaller than or equal to $p$.
\end{theorem}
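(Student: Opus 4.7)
The plan is to derive this theorem from Theorem \ref{description} by building, from $\sigma$, a weight function $\rho:\mathcal{S}\to (0,1)$ that satisfies (H1)--(H4) of that theorem. Once such $\rho$ is available, Theorem \ref{description} yields an AR distance $\theta_\rho\in\mathcal{J}(X,d)$ of Hausdorff dimension $p$, and hence $\dim_{AR}(X,d)\leq p$.

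The natural candidate is, for $B\in\mathcal{S}_{k+1}$ with parent $C=g(B)\in\mathcal{S}_k$,
\begin{equation*}
\rho(B)^p := \sigma(B)^p + a_k(C),\qquad a_k(C) := \frac{1-\sum_{B'\in T_k(C)}\sigma(B')^p}{\#T_k(C)},
\end{equation*}
so that $\sum_{B'\in T_k(C)}\rho(B')^p = 1$ identically. Condition (S2) together with the doubling bound $\#T_k(C)\leq N_0$ forces $a_k(C)\in[(1-\eta_0)/N_0,\,1/2]$, so $\rho(B)^p$ lies in a closed sub-interval of $(0,1)$ as soon as $\eta_0<1/2$, giving (H1). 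An immediate induction on $n-|B|$, based on the normalization $\sum_{T_k(C)}\rho^p=1$, yields $\sum_{B''\in D_n(B)}\pi(B'')^p=\pi(B)^p$, hence (H4) with $K_2=1$.

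For (H3), take $\alpha=2$ and, for $x,y\in X$ with $m=|c_\alpha(x,y)|$, consider $\gamma\in\Gamma_n(x,y)$ with $n$ large. Let $l^{*}$ be the minimum level visited by $\gamma$. If $l^{*}\leq m$, the vertical portion of $\gamma$ between level $n$ and level $l^{*}$ has $\rho$-length comparable to $\pi$ at level $l^{*}$, which in turn dominates $\pi(c_\alpha(x,y))$ since $\rho\leq\eta_+<1$ makes $\pi$-values geometric in the level. If $l^{*}\geq m+1$, the maximality of $m$ and the fact that $\mathcal{S}_{m+1}$ covers $X$ force a sub-path of $\gamma$ at level $m+2$ to belong to $\Gamma_{m+2}(B^{*})$ for some $B^{*}\in\mathcal{S}_{m+1}$ containing $x$ or $y$; using $\rho\geq\sigma$ pointwise, (S1) delivers $L_\rho(\gamma)\geq\pi(B^{*})$, and $\pi(B^{*})\gtrsim\pi(c_\alpha(x,y))$ then follows from (H2) applied at level $m$.

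The main obstacle is (H2): verifying $\pi(B)/\pi(B')\leq K_0$ for $B\sim B'$ in $\mathcal{S}_k$, uniformly in $k$. Lemma \ref{propgraphe}(iv)--(v) combined with (\ref{constantes}) ensures that the ancestor chains $\{g(B)_l\}_{l\leq k}$ and $\{g(B')_l\}_{l\leq k}$ are neighbors at every level, but a per-level bound on $\rho(g(B)_l)/\rho(g(B')_l)$ accumulates over $k$ factors and does not close. The correct strategy is to invoke the already-established (H4): writing $\pi(B)^p=\sum_{D_n(B)}\pi(B'')^p$ and $\pi(B')^p=\sum_{D_n(B')}\pi(B'')^p$, one compares the two sums by matching descendants of $B$ with descendants of $B'$ up to a bounded-cardinality correction governed by the doubling constant $N_0$ and the combinatorics of $Z_d$. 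This produces $K_0=K_0(p,\lambda,\kappa,N_0)$, completes the verification of (H1)--(H4), and Theorem \ref{description} then concludes with the desired AR distance of dimension $p$.
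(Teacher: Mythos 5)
Your direct construction of $\rho$ gives (H1) and (H4) cleanly, and you correctly identify (H2) as the crux. But the proposed derivation of (H2) from (H4) by ``matching descendants'' does not work, and this gap is fatal. The identities $\pi(B)^p=\sum_{B''\in D_n(B)}\pi(B'')^p$ and $\pi(B')^p=\sum_{B''\in D_n(B')}\pi(B'')^p$ sum over disjoint sets of vertices, and the $\pi$-values of elements in $D_n(B)$ are products along ancestry chains that never pass through $B'$ (and vice versa). To compare $\pi(A)$ for $A\in D_n(B)$ with $\pi(A')$ for a nearby $A'\in D_n(B')$ one needs precisely a Harnack-type bound on neighbors at every intermediate level, which is (H2) itself; without it the ratio can accumulate to $(\eta_+/\eta_-)^{k}$ as you already noted, and no geometric matching of index sets cancels that accumulation. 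A concrete way to see the obstruction: $\sigma$ may vanish on an entire deep subtree of $Z_d$ while taking a value $\asymp\eta_0^{1/p}$ on an adjacent subtree, forcing $\rho$ to differ by a fixed multiplicative factor at each level along the two parallel ancestries; the ratio of the resulting $\pi$-values is then unbounded in $k$, and the normalization $\sum_{T_k(C)}\rho^p=1$ does nothing to control it because the two families of descendants hang from different parents.

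The paper resolves this by an entirely different mechanism: the $\rho$ satisfying (H2) is not defined by a closed formula but \emph{constructed inductively level by level}, using Lemma~\ref{modificasion} (a Vol'berg--Koniagyn-style doubling-measure trick) which, given a level-$k$ weight satisfying the Harnack bound and a candidate level-$(k+1)$ weight compatible with it, produces a bounded upward modification of the latter that restores the Harnack bound at level $k+1$ while only increasing each value by a factor controlled by a neighbor. This modification is then combined with Lemma~\ref{pasageconv} (a combinatorial convolution that converts (S1) into the stronger (H3')-type admissibility) and a final mass-normalization step to recover (H4), after which Proposition~\ref{controldim} upgrades (H3') to (H3). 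Your outline also leans on (H2) inside the (H3) argument (the inequality $L_\rho(\gamma)\geq\pi(B^*)$ for a horizontal sub-path at level $m+2$ does not follow from (S1) alone: $L_\rho(\gamma)=\sum_i\pi(B_i)$ and the $B_i$ have varying parents, so factoring out a single $\pi(B^*)$ requires comparing $\pi$ on neighboring parents), so the gap in (H2) propagates there as well. The moral is that in this theorem one cannot merely \emph{verify} (H2) for a directly-defined weight; one must actively \emph{enforce} it by an inductive modification, and that is the content of Lemma~\ref{modificasion}.
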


The important point of the proposition is that we can get rid of the condition (H2), as long as the sum (\ref{sumachica}) is sufficiently small. We remark that even if $\eta_0$ depends on $p$, we can take $\eta_0$ to be uniform if $p$ varies in a bounded interval of $(0,+\infty)$. We derive Theorem \ref{simplification} from Proposition \ref{controldim} below. We start by modifying the hypothesis (H3) of Theorem \ref{description}. The purpose is to state a condition on the lengths of horizontal curves which implies (H3). 

Let $\rho:\mathcal{S}\to \R_+$ be a function, we define $\rhostar:\mathcal{S}\to\R_+$ by
$$\rhostar(B)=\min\limits_{B'\sim B\in\mathcal{S}}\rho(B),\ \ \textrm{for } B\in\mathcal{S}.$$
If $\gamma$ is a horizontal path of level $k$, we define
$$L_h(\gamma,\rho)=\sum\limits_{j=1}^{N-1}\rhostar(B_j)\wedge\rhostar(B_{j+1}).$$
The $h$ stands for horizontal. We have the following result.

\begin{proposition}\label{controldim}
Let $(X,d)$ be a compact, doubling and uniformly perfect metric space. Consider the graph $Z_d$ constructed in the previous section with $a$ and $\lambda$ satisfying (\ref{constantes}). Assume there exist $p>0$ and a function $\rho:\mathcal{S}\to (0,+\infty)$, which satisfy the hypothesis (H1), (H2), (H4) of Theorem \ref{description}, and also
\begin{enumerate}
\item[(H3')] for all $k\geq 0$ and all $B\in\mathcal{S}_k$, if $\gamma\in \Gamma_{k+1}\left(B\right)$, then $L_h(\gamma,\rho)\geq 1$.
\end{enumerate}
Then the function $\rho$ also verifies the hypothesis (H3).
\end{proposition}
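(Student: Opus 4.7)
My plan is to pass from the local horizontal bound $(H3')$ to the global bound $(H3)$ by extracting from an arbitrary path $\gamma \in \Gamma_n(x,y)$ a horizontal ``cross-section'' of $\gamma$ at a level one step finer than $m = |c_\alpha(x,y)|$, applying $(H3')$ to it, and then propagating the conclusion back through the Harnack-type inequality $(H2)$ and the Ahlfors regularity estimate $(H4)$.

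\smallskip

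\textbf{Step 1 (Reference ball).} Let $C \in c_\alpha(x,y)$ at level $m$. I would first produce an integer $s_0 \ge 1$ depending only on $\alpha, a, \kappa, \lambda$, and an element $\tilde C \in \mathcal{S}_{m+s_0}$ such that $x \in \tilde C$ (via the Voronoi cell $V_{m+s_0}$), $d(y, x_{\tilde C}) > 2\kappa r_{m+s_0}$, and the ancestor $g(\tilde C)_m$ is a horizontal neighbor of $C$ in $G_m$. Existence follows from $d(x,y) \ge (\alpha-1)\kappa r_{m+1}$ together with \eqref{constantes}. Iterating $(H2)$ along the ancestor chain from $\tilde C$ to $g(\tilde C)_m$ and across $g(\tilde C)_m \sim C$ yields $\pi(\tilde C) \asymp \pi(C)$, with implicit constant depending only on $K_0, \eta_-, \eta_+$ and $s_0$.

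\smallskip

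\textbf{Step 2 (Extraction).} Set $n_0 := m + s_0 + 1$ and take $\gamma \in \Gamma_n(x,y)$ with $n \ge n_0$. Project $\gamma$ vertically to level $m+s_0+1$ by sending every vertex $B_i$ of level $|B_i| \ge m+s_0+1$ to $B_i^\ast := g(B_i)_{m+s_0+1}$. Using Lemma~\ref{propgraphe} (parts (iv), (v), (vii)) together with $\lambda \ge 32$ and \eqref{constantes}, one checks that consecutive distinct projections are horizontal neighbors at level $m+s_0+1$. After removing duplicates and truncating at the first index where the center leaves $2\tilde C$, one obtains a path $\tilde\gamma \in \Gamma_{m+s_0+1}(\tilde C)$.

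\smallskip

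\textbf{Step 3 (Applying $(H3')$).} By $(H3')$ one has $L_h(\tilde\gamma, \rho) \ge 1$; since $\rhostar \le \rho$ and $L_h$ is a sum of consecutive minima of $\rhostar$'s, this yields $\sum_j \rho(B_j^\ast) \gtrsim 1$ over the vertices of $\tilde\gamma$. Each such $B_j^\ast$ has its center in $2\tilde C$, hence $g(B_j^\ast)_m$ is a horizontal neighbor of $C$; applying $(H2)$ $s_0+1$ times along the ancestor chain of $B_j^\ast$ gives
$$\pi(B_j^\ast) \;\asymp\; \rho(B_j^\ast)\cdot \pi(g(B_j^\ast)_m) \;\asymp\; \rho(B_j^\ast)\cdot \pi(C),$$
so that $\sum_j \pi(B_j^\ast) \gtrsim \pi(C)$.

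\smallskip

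\textbf{Step 4 (Transfer to $\gamma$) and main obstacle.} For each $B_j^\ast \in \tilde\gamma$, let $S_j = \{B_i \in \gamma : g(B_i)_{m+s_0+1} = B_j^\ast\}$. One must show a cell-wise bound $\sum_{B \in S_j}\pi(B) \gtrsim \pi(B_j^\ast)$, uniformly in $n$: summed over $j$ it gives $L_\rho(\gamma) \gtrsim \sum_j \pi(B_j^\ast) \gtrsim \pi(C)$. This is the technical heart of the argument, because a single vertex $B_i \in S_j$ at level $n$ can have $\pi(B_i)$ as small as $\eta_-^{n-m-s_0-1}\pi(B_j^\ast)$, so the estimate is not pointwise. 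The resolution combines hypothesis $(H4)$, which controls the total $p$-mass $\sum_{B' \in D_l(B_j^\ast)} \pi(B')^p \asymp \pi(B_j^\ast)^p$ at every finer level $l$, with the geometric/doubling fact that $\gamma$ must traverse a portion of $D_n(B_j^\ast)$ of diameter comparable to $r_{m+s_0}$ using horizontal steps of size at most $\asymp r_n$. All four hypotheses $(H1)$--$(H4)$, Lemma~\ref{propgraphe} and \eqref{constantes} feed into this last comparison, and the final constant $K_1$ in $(H3)$ depends on $s_0, K_0, K_2, \eta_\pm, p, \alpha, \lambda, \kappa, a$.
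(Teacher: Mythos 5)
Your Step 4 contains a genuine gap that the sketch does not close. Projecting $\gamma$ once to level $m+s_0+1$ and applying (H3$'$) once is not enough: the cell-wise bound $\sum_{B\in S_j}\pi(B)\gtrsim\pi(B_j^*)$ can fail. If $\gamma$ lies at level $n\gg m+s_0+1$ and traverses $D_n(B_j^*)$, each vertex in $S_j$ contributes a weight as small as $\eta_+^{\,n-m-s_0-1}\pi(B_j^*)$, while the number of vertices in $S_j$ is at most of order $a^{\,n-m-s_0-1}$ (and can be much smaller if $\gamma$ merely grazes the cell). The product $(a\,\eta_+)^{\,n-m-s_0-1}$ need not be bounded below, since nothing forces $a\eta_+\geq 1$; worse, a path may touch a cell at only $O(1)$ vertices. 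Hypothesis (H4) controls the total $p$-mass $\sum_{D_n(B_j^*)}\pi^p$, which for $p>1$ dominates no lower bound on the linear sum over the (possibly sparse) subset actually visited, and the ``must traverse a portion of diameter $\asymp r_{m+s_0}$'' claim is false for an arbitrary edge-path. So the proposed combination of (H4) plus doubling does not rescue the estimate.

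The paper avoids this by applying (H3$'$) at \emph{every} level between $n$ and $m+1$, not once. Lemma \ref{mascorta} takes a path of level at most $k+1$ and produces a path of level at most $k$ with no larger $\hat{\ell}_1$-length, and its proof uses (H3$'$) via Lemma \ref{type1} at level $k+1$ on each sub-arc of $\gamma$ that crosses an annulus at that scale. Iterating this descent from $n$ to $m+1$ is precisely what compensates the geometric decay of $\pi$: each application of (H3$'$) recovers one factor in the telescoping chain $\pi(B_j^*)=\rho\cdots\rho\,\pi(\text{ancestor})$. Your one-shot version discards this recursion. To repair it, you would need to argue by downward induction on levels — re-deriving essentially Lemmas \ref{type1}--\ref{borneinf} — and you would also need to treat the vertices of $\gamma$ of level strictly below $m+s_0+1$ (your projection $g(B_i)_{m+s_0+1}$ is undefined there; the paper handles this in the ``second case'' of Lemma \ref{borneinf} by appending vertical geodesics and choosing $n_0$ to make their length negligible).
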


We first prove Proposition \ref{controldim}. We divide the proof into several lemmas. We start with the following remark: by Lemma \ref{longarete}, we have $\ell_1(\gamma)\asymp L_\rho(\gamma)$; recall that we denote by $\ell_1$ the length $\ell_\varepsilon$ for $\varepsilon=1$. Thus, to control the length $L_\rho(\gamma)$ of curves in $Z_d$, in order to show (H3), it is enough to work with the length function $\ell_1$. For technical reasons, we modify the length function $\ell_1$ by replacing it with another bi-Lipschitz equivalent one. For $k\geq 0$, we define $\pi^*:\mathcal{S}\to (0,+\infty)$ by setting
$$\pi^*(B)=\min\limits_{B'\sim B\in\mathcal{S}}\pi(B').$$
From (\ref{czero}), one has 
\begin{equation}\label{asterisque}
\pi(B)\geq\pi^*(B)\geq \frac{1}{K_0}\pi(B)\ \ \text{for all }B\in\mathcal{S}.
\end{equation}
This and (H1), imply that if $B\in\mathcal{S}_{k +1}$ and if $B'=g(B)_k$, then
\begin{equation}
\frac{1}{K_0}\pi^*(B)\leq \frac{1}{K_0}\pi(B)\leq \frac{1}{K_0}\pi(B')\leq \pi^*(B'),\label{asterisque1}
\end{equation}
and
\begin{equation}
\pi^*(B')\leq \pi(B')\leq\frac{1}{\eta_-}\pi(B)\leq\frac{K_0}{\eta_-}\pi^*(B).\label{asterisque2}
\end{equation}
Let $e=(B,B')$ be an edge of $Z_d$, by Lemma \ref{longarete} and the inequalities (\ref{asterisque1}) and (\ref{asterisque2}), we have:
\begin{enumerate}
\item if $e$ is horizontal with $B,B'\in\mathcal{S}_k$, then
\begin{equation}\label{longhorizontale}
\ell_1(e)\asymp \frac{\pi(B)+\pi(B')}{2}\asymp \pi^*(B)\wedge\pi^*(B').
\end{equation}
\item and if $e$ is vertical with $B'\in\mathcal{S}_{k}$ and $B=g(B')_{k-1}$, then
\begin{equation}\label{longverticale}
\ell_1(e)\asymp \pi(B')\asymp \pi^*(B').
\end{equation}
\end{enumerate}
Let $K_5= K_0/ \eta_-$. We simply change the length of an edge in $Z_d$ by setting 
\begin{displaymath}
\hat{\ell}_1(e)=\begin{cases}\pi^*(B)\wedge\pi^*(B'),&\text{ if }e=(B,B')\text{ is a horizontal edge}.\\
K_5\pi^*(B'),&\text{ if } e=(B,B')\text{ is a vertical edge}.\end{cases}
\end{displaymath}
This definition is inspired by a similar one used in \cite{KL}. From (\ref{longhorizontale}) and (\ref{longverticale}), the length functions $\ell_1$ and $\hat{\ell}_1$ are bi-Lipschitz equivalent. We note in particular that the length distance induced by $\hat{\ell}_1$ is bi-Lipschitz equivalent to $d_1$ ($d_\varepsilon$ with $\varepsilon=1$).

The first step is to estimate the length $\hat{\ell}_1(\gamma)$ of certain curves in $Z_d$. The first type of curves, discussed in the following lemma, are horizontal curves which have a large enough, relative to the scale, ``diameter'', i.e. curves which verify the statement of (H3').

\begin{lemma}\label{type1}
Let $k\geq 0$ and $B\in\mathcal{S}_k$. Consider $\gamma=\{(B_i,B_{i+1})\}_{i=1}^{N-1}$ a horizontal path of level $k+1$, such that $z_i\in 3\cdot B$ for all $i$,  $z_1\in B$ and $z_N\notin 2\cdot B$. We denote $B'\in\mathcal{S}_k$ the parent of $z_1$. Then
\begin{equation}\label{preestimation}
\hat{\ell}_1(\gamma)=\sum_{i=1}^{N-1}\pi^*(B_i)\wedge\pi^*(B_{i+1})\geq \max\left\{\pi^*(B'),\pi^*(B)\right\}.
\end{equation}
\end{lemma}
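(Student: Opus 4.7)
The plan is to apply hypothesis (H3') to an appropriate initial sub-path of $\gamma$ and then to convert the resulting $\rho^*$-length estimate into a $\pi^*$-length estimate via a uniform geometric comparison between parents of horizontal neighbors of the $B_j$'s and the balls $B$ and $B'$.

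First I would truncate: let $N'$ be minimal with $z_{N'}\notin 2\cdot B$; such an index exists because $z_1\in B\subset 2\cdot B$ while $z_N\in X\setminus 2\cdot B$. The sub-path $\gamma'=\{(B_i,B_{i+1})\}_{i=1}^{N'-1}$ lies in $\Gamma_{k+1}(B)$, so (H3') yields
$$L_h(\gamma',\rho)=\sum_{j=1}^{N'-1}\rho^*(B_j)\wedge\rho^*(B_{j+1})\geq 1.$$

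The core step is a geometric claim: for every $j\in\{1,\ldots,N'\}$ and every horizontal neighbor $C\sim B_j$ in $\mathcal{S}_{k+1}$, the parent $P=g(C)_k$ is a level-$k$ neighbor of \emph{both} $B$ and $B'$. Since $z_j\in 3\cdot B$ one has $d(z_j,x_B)\leq 3\kappa r_k$; since $z_1\in B$ and $x_{B'}$ is the closest point of $X_k$ to $z_1$ one has $d(x_{B'},x_B)\leq 2\kappa r_k$; and from $C\sim B_j$ together with the minimality of $x_P$ in $X_k$ relative to $x_C$ one obtains $d(x_P,x_B)\leq 2\,d(x_C,x_B)\leq 6\kappa r_k+4\lambda\kappa r_{k+1}$, whence $d(x_P,x_{B'})\leq d(x_P,x_B)+2\kappa r_k$. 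Using $\lambda\geq 32$ together with (\ref{constantes}), both quantities stay below $2\lambda\kappa r_k$, so $P\sim B$ and $P\sim B'$, which gives $\pi(P)\geq\max\{\pi^*(B),\pi^*(B')\}$.

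Combining the two ingredients, for any $C\sim B_j$ we have $\pi(C)=\rho(C)\pi(P)\geq\rho^*(B_j)\cdot\max\{\pi^*(B),\pi^*(B')\}$; taking the minimum over $C\sim B_j$ gives $\pi^*(B_j)\geq\rho^*(B_j)\cdot\max\{\pi^*(B),\pi^*(B')\}$. Summing the edge contributions of $\gamma'$ and invoking (H3'),
$$\hat{\ell}_1(\gamma)\geq\hat{\ell}_1(\gamma')=\sum_{j=1}^{N'-1}\pi^*(B_j)\wedge\pi^*(B_{j+1})\geq\max\{\pi^*(B),\pi^*(B')\}\cdot L_h(\gamma',\rho)\geq\max\{\pi^*(B),\pi^*(B')\}.$$

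The main obstacle is the geometric comparison: verifying uniformly in $j\leq N'$ and in $C\sim B_j$ that the parent of $C$ falls inside both $\lambda$-neighborhoods of $B$ and $B'$ at level $k$. This is a purely quantitative check relying on $\lambda\geq 32$ and $a\geq 6\kappa^2\max\{\lambda,K_P\}$, and it is precisely where the choice of constants in (\ref{constantes}) is put to work; once it is settled, the estimate follows by straightforward linearity from (H3').
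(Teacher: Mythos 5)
Your proof is correct and follows essentially the same route as the paper's: you establish the pointwise comparison $\pi^*(B_j)\geq\rho^*(B_j)\cdot\max\{\pi^*(B),\pi^*(B')\}$ by showing the level-$k$ parent of any horizontal neighbor of $B_j$ is itself a neighbor of both $B$ and $B'$, and then sum against (H3'). The one worthwhile clarification you add is the truncation to the minimal exit index $N'$: since the lemma's hypothesis only guarantees $z_i\in 3\cdot B$ (not $2\cdot B$) for intermediate $i$, the untruncated $\gamma$ need not lie in $\Gamma_{k+1}(B)$, and passing to $\gamma'$ is the clean way to invoke (H3'); your bound on $d(x_P,x_C)$ via minimality of the parent gives a looser constant than the paper's direct use of (\ref{fundaminclusions}), but it still comfortably clears $2\lambda\kappa r_k$ under (\ref{constantes}).
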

\begin{proof}
First we show that for all $j=1,\ldots, N$, we have
\begin{equation}\label{eqpreestimation}
\pi^*(B_j)\geq \max\left\{\pi^*(B'),\pi^*(B)\right\}\rho^*(B_j).
\end{equation}
Let $A\sim B_j\in \mathcal{S}_{k+1}$ be such that $\pi^*(B_j)=\pi(A)$ and let $A'=g(A)_k$. Then
\begin{align*}
d(x_B,x_{A'})&\leq d(x_B,z_j)+d(z_j,x_A)+d(x_A,x_{A'}) \leq 3\kappa r_{k}+2\lambda\kappa r_{k+1}+ \kappa r_k\\
&=\kappa\left(4+\frac{2\lambda}{a}\right)r_{k}< \lambda \kappa r_{k}.
\end{align*}
The last inequality follows from the choice made in (\ref{constantes}). Since $d(x_{B'},x_B)\leq 2\kappa r_{k}\leq \lambda\kappa r_{k}$, we also have $x_B\in \lambda\cdot B'\cap \lambda\cdot A'$. This implies $A'\sim B$ and $A'\sim B'$. Therefore, $\max\left\{\pi^*(B'),\pi^*(B)\right\}\leq \pi(A')$ and
$$\pi^*(B_j)=\pi(A)=\pi(A')\cdot \rho(A)\geq \max\left\{\pi^*(B'),\pi^*(B)\right\}\min\limits_{C\sim B_j\in\mathcal{S}_{k+1}}\rho(C).$$
This shows (\ref{eqpreestimation}). By (H3'), we know that $L_h(\gamma,\rho)\geq 1$ so
\begin{align*}
\sum_{j=1}^{N-1}\pi^*(B_j)\wedge \pi^*(B_{j+1})&\geq \sum_{j=1}^{N-1}\max\left\{\pi^*(B'),\pi^*(B)\right\}\left(\rho^*(B_j)\wedge \rho^*(B_{j+1})\right)\\
&=\max\left\{\pi^*(B'),\pi^*(B)\right\}L(\gamma,\rho)\geq \max\left\{\pi^*(B'),\pi^*(B)\right\}.
\end{align*}
This ends the proof of the lemma.
\end{proof}

The second type of curves is the set of curves which possess a vertical edge, despite a small ``diameter''. The definition of $\hat{\ell}_1(e_v)$ for $e_v$ a vertical edge, can be used to estimate their length from below. More precisely: if $e_v=(x,y)$ is a vertical edge, with $B\in\mathcal{S}_{k+1}$ and $B'=g(B)_k$, and if $e_h=(B',C)$ is a horizontal edge, then
\begin{equation}\label{type2}
\hat{\ell}_1(e_h)\leq\hat{\ell}_1(e_v).
\end{equation} 
In fact, by definition and from (\ref{asterisque2}), we have $\hat{\ell}_1(e_h)\leq\pi^*(B')\leq C_1\pi^*(B)=\hat{\ell}_1(e_v).$

Let $\gamma=\{e_i=(B_i,B_{i+1})\}_{i=1}^{N-1}$ be an edge-path in $Z_d$, we say that \emph{$\gamma$ is of level at most $k$} if $\left|B_i\right|\leq k$ for all $i$. 

\begin{lemma}\label{mascorta}
Let $A_1$ and $A_2$ be two elements of $\mathcal{S}_{k +1}$ such that $4\kappa r_k<d(y_1,y_2)$, where we write $y_i:=x_{A_i}$. Let $\gamma=\{(B_j,B_{j +1})\}_{j=1}^{N-1}$ be a path of level at most $k+1$ joining $A_1$ and $A_2$. Then there exists a path of level at most $k$, $\gamma'= \{(C_i,C_{i +1})\}_{i=1}^{N'-1}$, such that:
\begin{enumerate}
\item $C_1,C_{N'}\in\mathcal{S}_k$ are the parents of $A_1$ and $A_2$ respectively, and
\item $\hat{\ell}_1(\gamma')\leq\hat{\ell}_1(\gamma)$.
\end{enumerate}
\end{lemma}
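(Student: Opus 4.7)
The natural construction of $\gamma'$ is a \emph{parent projection}: for every vertex $B_j$ of $\gamma$ with $|B_j|=k+1$, replace it by $C_j:=g(B_j)_k\in\mathcal{S}_k$; leave every vertex already at level $\leq k$ unchanged; and finally collapse consecutive repetitions in the resulting sequence. By construction the endpoints become $P_1=g(A_1)_k$ and $P_2=g(A_2)_k$, which delivers the endpoint condition of item (1) for free.

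I first verify that the collapsed sequence really is an edge-path of $Z_d$ at level $\leq k$, by classifying the edges of $\gamma$. A horizontal edge $(B_j,B_{j+1})$ at level $k+1$ has $d(x_{B_j},x_{B_{j+1}})\leq 2\lambda\kappa r_{k+1}\leq 4\kappa r_k$ by (\ref{constantes}), so Lemma \ref{propgraphe}(iv) applies and the parents $C_j,C_{j+1}\in\mathcal{S}_k$ are either equal or joined by a horizontal edge. A vertical edge between levels $k$ and $k+1$ has its level-$k$ endpoint equal to the parent of its level-$(k+1)$ endpoint, so the corresponding consecutive pair in the projected sequence is equal and collapses. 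Finally, any edge of $\gamma$ with both endpoints at level $\leq k$ survives verbatim. Hence $\gamma'$ is a well-defined edge-path of $Z_d$ at level $\leq k$ from $P_1$ to $P_2$.

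For the length inequality $\hat{\ell}_1(\gamma')\leq\hat{\ell}_1(\gamma)$, the structural feature to exploit is that the constant $K_5=K_0/\eta_-$ appearing in the vertical-edge cost $\hat{\ell}_1(e)=K_5\pi^*(\text{child})$ is precisely the worst-case ratio $\pi^*(\text{parent})/\pi^*(\text{child})$ supplied by (\ref{asterisque2}). Consequently, projecting a horizontal edge at level $k+1$ to its parent-level counterpart inflates the $\hat{\ell}_1$-cost by at most a factor $K_5$, while each vertical edge of $\gamma$ carries a budget $K_5\pi^*(\text{child})$ that vanishes in $\gamma'$; the observation (\ref{type2}) codifies this trade-off at the level of a single edge. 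The hard part is the global bookkeeping: a naive edge-wise accounting only yields the weaker bound $\hat{\ell}_1(\gamma')\leq K_5\,\hat{\ell}_1(\gamma)$, so the argument has to be organised by \emph{excursions} --- maximal subpaths of $\gamma$ entirely at level $k+1$ --- showing excursion by excursion that the two framing vertical edges, each contributing a full $K_5\pi^*$ term to $\hat{\ell}_1(\gamma)$ but nothing to $\hat{\ell}_1(\gamma')$, are exactly what is needed to absorb the $K_5$-inflation of the enclosed level-$(k+1)$ horizontal walk after projection. The hypothesis $4\kappa r_k<d(y_1,y_2)$ is what excludes the degenerate short configurations in which $P_1=P_2$ or a single framing vertical edge is too small to discharge the inflation, and thus is precisely the condition that makes this bookkeeping close.
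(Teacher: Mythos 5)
The ``parent projection'' produces a legitimate level-$\le k$ edge-path (your use of Lemma~\ref{propgraphe}(iv) for projected horizontal edges and the collapse of vertical edges are both fine), but the length argument does not close, and the reason is structural, not cosmetic.

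Your bookkeeping claims that an excursion at level $k+1$ can ``pay'' for its projected $K_5$-inflation out of the two framing vertical edges. This cannot work: the inflation from (\ref{asterisque2}) applies \emph{per horizontal edge} of the excursion, so if the level-$(k+1)$ walk has $N$ horizontal edges the projected walk can cost up to $K_5$ times as much on each of them, giving an excess of order $(K_5-1)\sum_j\pi^*(B_j)\wedge\pi^*(B_{j+1})$, whereas the two framing vertical edges only contribute a fixed budget of order $2K_5\pi^*(\cdot)$ --- essentially a single term. For a long excursion this is hopelessly insufficient, and indeed the lemma would be \emph{false} under your hypotheses alone: nothing you invoke prevents $\rho$ from assigning a long winding level-$(k+1)$ path an arbitrarily tiny $\hat\ell_1$-length while its level-$k$ shadow stays long. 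What rules this out is the admissibility condition~(H3'), whose operational form here is Lemma~\ref{type1}: a horizontal level-$(k+1)$ path that escapes the $2$-fold inflate of a level-$k$ ball has $\hat\ell_1$-length at least $\pi^*$ of the parent. You never invoke~(H3') or Lemma~\ref{type1}, yet it is the single essential input.

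The paper's actual construction is also not the full parent projection. Within each excursion the level-$(k+1)$ walk is \emph{subsampled} at indices $j_l$ marking escapes from a $2\kappa r_k$-ball; the replacement level-$k$ path takes one edge $e_l=(C_l,C_{l+1})$ per block $\sigma_l=\{B_{j_l},\dots,B_{j_{l+1}}\}$, and the inequality $\hat\ell_1(e_l)\le\pi^*(C_l)\le\hat\ell_1(\sigma_l)$ --- the left bound by definition of $\hat\ell_1$ on horizontal edges, the right bound by Lemma~\ref{type1} --- makes the comparison close with no $K_5$-loss at all. The residual short block at the end of each excursion is handled by~(\ref{type2}) against the adjacent vertical edge, and this is where the hypothesis $4\kappa r_k<d(y_1,y_2)$ actually enters: it guarantees the excursions at the two endpoints are proper subpaths of $\gamma$, so a vertical edge is present to discharge against. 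In short, your construction needs to be replaced by the coarser subsampled one, and the proof needs Lemma~\ref{type1}~/~(H3') as its engine; without them the inequality in item~(2) is not provable.
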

\begin{proof}

\begin{figure}
\centering
\setlength{\unitlength}{1cm}
\begin{picture}(12.5,4)
\includegraphics[scale=0.8]{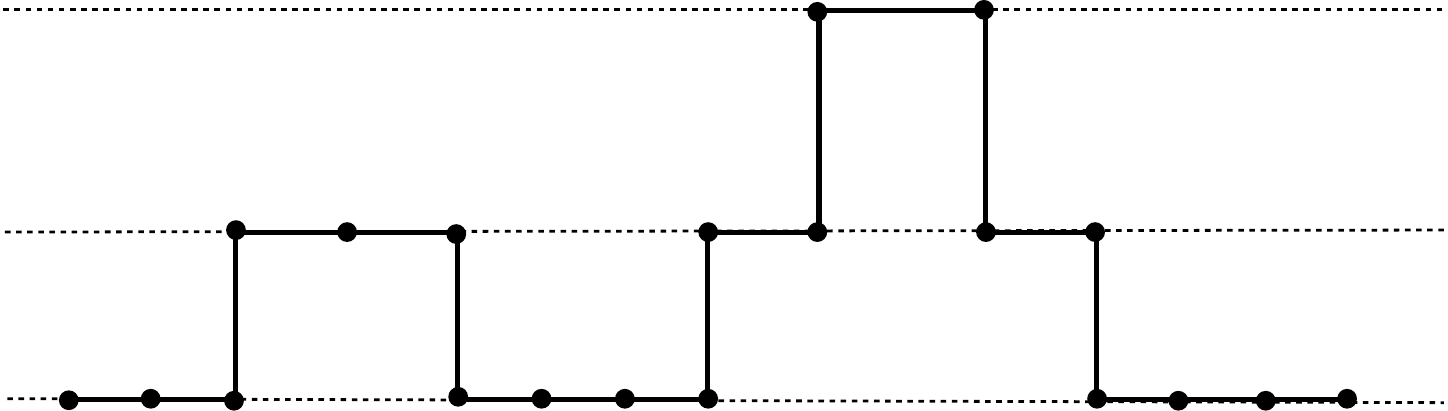}
\put(0.1,0){\footnotesize $k+1$}
\put(0.1,1.4){\footnotesize $k$}
\put(0.1,3.2){\footnotesize $k-1$}
\put(-0.8,0.3){\footnotesize $t_3$}
\put(-11.2,0.3){\footnotesize $s_1$}
\put(-9.8,1.7){\footnotesize $t_1$}
\put(-8.05,1.7){\footnotesize $s_2$}
\put(-6,1.7){\footnotesize $t_2$}
\put(-2.85,1.7){\footnotesize $s_3$}
\end{picture}
\caption{Proof of Lemma \ref{mascorta}: decomposing the path in sub-paths $\gamma_i$.
\label{fig:decomposition}}
\end{figure}

Let $\gamma$ be such a path of level at most $k+1$ with $B_1,B_N\in\mathcal{S}_{k +1}$. We denote $x_j$ the center of the ball $B_j$. We can decompose $\gamma$ in sub-paths of level at most $k$, or level equal to $k+1$. Let $s_1=1$, define inductively positive integers $s_i$ and $t_i$ as follows:
$$t_i=\min\left\{j>s_i:|B_j|\leq k\text{ or } j=N\right\},$$
$$s_{i+1}=\min\left\{j\geq t_i:|B_{j+1}|= k+1\right\}.$$
We stop when $t_i=N$ for some $i:=M$. Note that $|B_{s_1}|=|B_{t_M}|=k+1$, and for the others $|B_{s_i}|=|B_{t_i}|=k$ (see Figure \ref{fig:decomposition}). Since we are trying to bound from below the length of $\gamma$, we can assume without loss of generality that $\gamma$ is a path without self-intersections; thus $B_{s_i}\neq B_{t_i}$ for all $i$.

For each $i\in\{1,\ldots, M\}$, set $\gamma_i=\{(B_j,B_{j +1})\}_{j=s_i} ^ {t_i-1}$; we will construct $\gamma'_i$ of level at most $k$ such that $\hat{\ell}_1\left(\gamma'_i\right)\leq\hat{\ell}_1\left(\gamma_i\right)$. We let the cases $i=1$ and $i=M$ to the end.

Fix $i\in\{2,\ldots, M-1\}$ and we write $C=B_{s_i}$. We divide the construction into two cases.

\begin{flushleft}
\underline{First case:} $x_j\in B\left(x_C, 2\kappa r_k\right)$ for all $j\in\{s_i+1,\ldots,t_i-1\}$.
\end{flushleft}
In this case, $B_{s_i}$ and $B_{t_i}$ in $\mathcal{S}_k$ are the parents of $B_{s_i+1}$ and $B_{t_i-1}$ respectively. Since $d\left(x_{s_i+1},x_{t_i-1}\right)\leq 4\kappa r_{k}$, by item (iv) of Lemma \ref{propgraphe}, we know that $e =\left(B_{s_i},B_{t_i}\right)$ is an edge of $Z_d$. So we set $\gamma'_i=e$. Since $e'=\left(B_{t_i-1},B_{t_i}\right)$ is a vertical edge, from (\ref{type2}), we obtain
$$\hat{\ell}_1\left(\gamma'_i\right)=\hat{\ell}_1(e)\leq \hat{\ell}_1(e')\leq \hat{\ell}_1\left(\gamma_i\right).$$

\begin{flushleft}
\underline{Second case:} There exists $j_1\in\{s_i+1,\ldots,t_i-1\}$ such that $x_{j_1}\notin B\left(x_C,2\kappa r_{k}\right)$.
\end{flushleft}
We can assume that $j_1$ is the first index with this property. The path $\{(B_j, B_{j +1})\}_{j=s_i+1}^{t_i-2}$ is of level equal to $k+1$. We decompose this path again to use the estimate (\ref{preestimation}). We denote $j_0=s_i+1$ and $C_0=C$. Suppose $j_l$ and $C_l$ defined, we denote $z_l$ the center of $C_l$, and if $j_l<t_i-1$, we define
$$j_{l+1}=\min\left\{j_l< j\leq t_i-1:x_{j}\notin B\left(z_l,2\kappa r_{k}\right)\text{ or } j=t_i-1\right\},$$ 
and let $C_{l+1}\in\mathcal{S}_k$ be the parent of $B_{j_{l+1}}$; we also denote $z_{l+1}$ the center of $C_{l+1}$. In particular, we have $x_{j_{l+1}}\in B\left(z_{l+1},\kappa r_{k}\right)$. Thus, we obtain a sequence $\{j_0,\ldots,j_{L_i}\}\subset \{s_i+1,\ldots,t_i-1\}$ with $j_0=s_i+1$ and $j_{L_i}=t_i-1$. Write $\sigma_l:=\left\{(B_j,B_{j+1})\right\}_{j=j_l}^{j_{l+1}-1}$. 

Let us show that $\sigma_l$ and $z_l$ satisfy the hypotheses of Lemma \ref{type1} for each $l\in\{0,\ldots,L_i-2\}$. We know by construction that $x_{j_{l+1}}\notin B\left(z_l, 2\kappa r_{k}\right)$ and that $x_j\in B\left(z_l, 2\kappa r_{k }\right)$ for all $j_l\leq j<j_{l+1}$. Moreover, since
\begin{multline}
d\left(z_l,x_{j_{l+1}}\right)\leq d\left(z_l,x_{j_{l+1}-1}\right)+d\left(x_{j_{l+1}-1},x_{j_{l+1}}\right)\\ \leq 2\kappa a^{-k}+2\lambda \kappa a^{-(k+1)}=\kappa \left(2+\frac{2\lambda}{a}\right)a^{-k}\leq 3\kappa r_{k},
\end{multline}
---the last inequality follows from the choice made in (\ref{constantes})--- we have
$$\left\{x_j\right\}_{j=j_l}^{j_{l+1}}\subset B\left(z_l,3\kappa r_{k}\right).$$
So, from (\ref{preestimation}), we know that
\begin{equation}\label{jlestimation}
\pi^*(C_l)\leq\hat{\ell}_1\left(\sigma_l\right)\ \ \text{for $l\in\{0,\ldots,L_i-2\}$}.
\end{equation}
By item (v) of Lemma \ref{propgraphe}, $e_l= \left(C_l,C_{l+1}\right)$ is an edge of $Z_d$. In fact, the edge $\left(C_l,B_{j_ {l+1}}\right)\in G_d$, and $C_{l+1}$ is the parent of $B_{j_{l+1}}$. Since for $l=L_i-1$, we have $x_{t_i-1}=x_{j_{L_i}}\in B\left(z_{L_i-1}, 2\kappa r_{k}\right)$, similarly the existence of the edge $e_{t_i}=\left(C_{L_i-1},B_{t_i}\right)$ holds. Moreover, since $\left(B_{t_i-1},B_{t_i}\right)$ is a vertical edge, from (\ref{type2}), we have
\begin{equation}\label{tiestimation}
\hat{\ell}_1(e_{t_i})\leq \hat{\ell}_1\left((B_{t_i-1},B_{t_i})\right).
\end{equation}
Let $\gamma'_i=e_0\ast\cdots\ast e_{L_i-2}\ast e_{t_i}$, then $\gamma'_i$ joins $B_{s_i}$ and $B_{t_i}$. Moreover, from (\ref{jlestimation}) and (\ref{tiestimation}), we have
\begin{align*}
\hat{\ell}_1(\gamma'_i)&=\sum_{l=0}^{L_i-2}\hat{\ell}_1(e_l)+\hat{\ell}_1(e_{t_i})\leq \sum_{l=0}^{L_i-2}\pi^*(C_l)+ \hat{\ell}_1\left((B_{t_i-1},B_{t_i})\right)\\
& \leq \sum_{l=0}^{L_i-2}\hat{\ell}_1(\sigma_l)+ \hat{\ell}_1\left((B_{t_i-1},B_{t_i})\right)
\leq \hat{\ell}_1(\gamma_i).
\end{align*}

Consider the case $i=1$, we do a similar construction to the one above. Let $z$ be the center of $D_1\in\mathcal{S}_k$, the parent of $B_1$, so in particular $x_{1}\in B\left (z, \kappa r_k\right)$. Similarly, divide the construction into two cases. Assume first that $x_j\in B\left(z, 2\kappa r_{k}\right)$ for all $j\in\{1,\ldots,t_1-1\}$. Since $d(x_1, x_N)> 4\kappa r_{k}$, we know that $\gamma_1$ is a proper sub-path of $\gamma$. Thus $\left(B_ {t_1-1},B_{t_1}\right)$ is a vertical edge. An argument similar to that given above shows that $e=(D_1,B_{t_1})$ is an edge of $Z_d$, and that if we set $\gamma'_1= e$, we obtain $\hat{\ell}_1(\gamma'_1)\leq\hat{\ell}_1(\gamma_1)$. If instead, there exists $j$ such that $x_j\notin B\left(z, 2\kappa r_{k}\right)$, as in the second case above let $\gamma'_1=e_0\ast\cdots\ast e_{L_1-2}\ast e_{t_1}$.

Also for $i=M$ we do the same construction. Set $z=x_{s_M}$, so in particular $x_ {s_M+1}\in B\left(z, \kappa r_k\right)$. If $x_j \in B\left(z, 2\kappa r_{k}\right)$ for all $j=s_M+1,\ldots, N$, using the fact that $\left(B_{s_M},B_{s_M+1}\right)$ is a vertical edge, we see as above that it suffices to take $\gamma'_M=\left(B_{s_M}, D_2\right)$, where $D_2\in\mathcal{S}_k$ is the parent of $B_N$. Otherwise, with $j_0=s_M+1$ and $C_0=B_{s_M}$, we do as in the second case above and we obtain $\gamma'_M=e_0\ast\cdots\ast e_{L_M-3}\ast\left(C_{L_M-2},D_2\right)$. If $L_M\leq 2$ we take $(C_0,D_2)$. We must show the existence of the edge $\left(C_{L_M-2},D_2\right)$, which is done similarly as in the other cases.

Finally, we note that if $M=1$, i.e. $\gamma=\gamma_1$ is a path of level $k+1$, we have $s_1=1$ and $t_1=M$. We define similarly $j_0=1$, and $z_0$ the center of $C_0$ the parent parent of $B_1$. We also define by induction
$$j_{l+1}:=\min\left\{j_l<j\leq N:x_j\notin B\left(z_l, 2\kappa r_{k}\right)\text{ or }j=N\right\},$$
and $z_{l+1}$ the center of $C_{l+1}$ the parent of $B_{j_{l +1}}$. We obtain a sequence $\{j_0,\ldots,j_{L}\}\subset\{1,\ldots,N\}$ with $j_0=1$ and $j_{L}= N$. Write $\sigma_l=\left\{(B_j,B_{j+1})\right\}_{j=j_l}^{j_{l+1}-1}$. We show in the same way that both edges $e_l=\left(C_l,C_{l+1}\right)$ and $e= \left(C_{L-2},C_L\right)$ are in $Z_d$. The same arguments as above show that if $\gamma':=e_0\ast\cdots\ast e_{L-3}\ast e$, then $\hat{\ell}_1\left(\gamma'\right)\leq\hat{\ell_1}(\gamma)$.

In conclusion, in both cases, for $i\neq1,M$, we obtain a path $\gamma'_i$ of level at most $k$ joining $B_{s_i}$ and $B_{t_i}$, and length less than or equal to $\hat{\ell}_1(\gamma_i)$. For $i=1$, we obtain such a path joining $D_1\in\mathcal{S}_k$, the parent of $B_1$, to $B_{t_1}$. And for $i=M$, we obtain such a path joining $B_{s_M}$ to $D_2\in\mathcal{S}_k$, the parent of $B_N$. Finally, if we denote $\zeta_i=\left\{(B_j,B_{j+1})\right\}_{j=t_i}^{s_{i+1}-1}$ for $i=1,\ldots,M-1$, it suffices to take
$$\gamma'=\gamma'_1\ast\zeta_1\ast\gamma'_2\cdots\ast\zeta_{M-1}\ast\gamma'_M.$$
This completes the proof of the lemma.
\end{proof}

We take $\alpha=8$ in the statement of (H3), and to simplify the notation, we write $c(x,y)$ instead of $c_\alpha(x,y)$.
\begin{lemma}\label{borneinf}
There exists a uniform constant $K_6\geq 1$ with the following property: for all $x,y\in X$, there exists $k_0$ depending on $x$ and $y$, such that for $k\geq k_0$, if $B,B'\in\mathcal{S}_k$ are such that $x\in B$ and $y\in B'$, then any edge-path $\gamma$ joining $B$ and $B'$ verifies
\begin{equation}\label{eqborneinf}
\hat{\ell}_1(\gamma)\geq \frac{1}{K_6}\cdot \pi\left(c(x,y)\right).
\end{equation}
\end{lemma}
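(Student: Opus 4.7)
The strategy is to iteratively apply the level-reduction Lemma~\ref{mascorta} to $\gamma$ until its endpoints sit at a generation comparable to $m:=|c(x,y)|$, and then invoke the horizontal lower bound of Lemma~\ref{type1} near a ball whose $\pi^*$ is comparable to $\pi(c(x,y))$. Let $C^*\in c(x,y)$ realize the maximum, so $\pi(c(x,y))=\pi(C^*)$; since $\alpha=8$, the defining inequalities for the center give $7\kappa r_{m+1}\leq d(x,y)\leq 16\kappa r_m$. I pick $k_0\geq m+2$ large enough that for every $k\geq k_0$ and every $B,B'\in\mathcal{S}_k$ with $x\in B$, $y\in B'$, the centers satisfy $d(x_B,x_{B'})\geq d(x,y)-2\kappa r_k\geq 6\kappa r_{m+1}$.

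Next I apply Lemma~\ref{mascorta} iteratively. At each step, starting with a path at level at most $\ell+1$ with endpoints $A_1,A_2\in\mathcal{S}_{\ell+1}$, the condition $4\kappa r_\ell<d(x_{A_1},x_{A_2})$ must be checked. Using Lemma~\ref{propgraphe}(i), the current endpoints' centers drift by at most $\kappa\epsilon_{\ell+1}$ from those of $B$ and $B'$, so
\[
d(x_{A_1},x_{A_2})\geq d(x,y)-2\kappa r_k-2\kappa\epsilon_{\ell+1}\geq 7\kappa r_{m+1}-4\kappa\epsilon_{\ell+1},
\]
which stays comfortably above $4\kappa r_\ell$ as long as $\ell\geq m+1$, thanks to the choice of constants in (\ref{constantes}). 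After $k-(m+2)$ iterations I obtain a path $\tilde\gamma$ of level at most $m+2$ joining the level-$(m+2)$ parents $\hat B,\hat B'$ of $B,B'$, with $\hat\ell_1(\tilde\gamma)\leq\hat\ell_1(\gamma)$.

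Now I identify the ball to which Lemma~\ref{type1} will be applied. Pick $A^*\in\mathcal{S}_{m+1}$ with $x\in A^*$; then $d(y,x_{A^*})\geq d(x,y)-\kappa r_{m+1}\geq 6\kappa r_{m+1}>2\kappa r_{m+1}$, so $y\notin 2\cdot A^*$. Let $D^*\in\mathcal{S}_m$ be the parent of $A^*$. The triangle inequality combined with $x\in 8\cdot C^*$, $d(x,x_{D^*})\leq\kappa r_m$ and $\lambda\geq 32$ shows $d(x_{D^*},x_{C^*})\leq 9\kappa r_m\leq\lambda\kappa r_m$, so $D^*\sim C^*$. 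Hypothesis (H2) then gives $\pi(D^*)\asymp\pi(C^*)$, and combining (H1) with (\ref{asterisque}) yields $\pi^*(A^*)\gtrsim\eta_-\pi(D^*)\gtrsim\pi(c(x,y))$.

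The remaining—and principal—technical difficulty is to extract from $\tilde\gamma$ a horizontal subpath $\sigma$ of level $m+2$ satisfying the hypotheses of Lemma~\ref{type1} with $B=A^*$: $z_1\in A^*$, $z_i\in 3\cdot A^*$, $z_N\notin 2\cdot A^*$. Since $\tilde\gamma$ is only known to be of level \emph{at most} $m+2$, it may contain vertical edges or dip to strictly lower generations, so it is not a priori a horizontal path of level $m+2$. I propose to handle this exactly as in the proof of Lemma~\ref{mascorta}: decompose $\tilde\gamma$ into maximal horizontal excursions at level $m+2$ separated by vertical dips, follow the path from $\hat B$ outward and record the first maximal horizontal excursion at level $m+2$ that leaves $2\cdot A^*$ (bounding its length below by Lemma~\ref{type1} applied to $A^*$), while each vertical edge at level $m+2$ that $\tilde\gamma$ uses is at least as long as any adjacent horizontal edge at level $m+2$ by (\ref{type2}). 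Aggregating these contributions produces
\[
\hat\ell_1(\gamma)\geq\hat\ell_1(\tilde\gamma)\geq c\cdot\pi^*(A^*)\geq K_6^{-1}\pi(c(x,y)),
\]
for a constant $K_6$ depending only on $a$, $\lambda$, $\kappa$, $K_0$, $\eta_-$ and $\eta_+$, as required.
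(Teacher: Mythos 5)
Your reduction plan (iterating Lemma \ref{mascorta} down to a generation comparable to $m=|c(x,y)|$) is the same first move as the paper, but the proposal has two genuine gaps and a needlessly heavy final step.

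\textbf{Gap 1: paths dipping below level $k$ are not handled.} You apply Lemma \ref{mascorta} iteratively to $\gamma$, which requires the input path to be of level at most $\ell+1$ at every stage and in particular of level at most $k$ at the start. But the original $\gamma$ is an arbitrary edge-path in $Z_d$ joining $B,B'\in\mathcal{S}_k$; nothing forbids it from descending to levels $\gg k$. The paper treats this as a separate \emph{Second case}: it appends vertical geodesics $g_x$, $g_y$ from $B,B'$ down to $\mathcal{S}_n$ (where $n$ is the deepest level visited), reduces that extended path by the First case, and then subtracts off $\hat{\ell}_1(g_x\ast e_x)+\hat{\ell}_1(e_y\ast g_y)$. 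The size of these tails is controlled by choosing $k_0$ so that $2K_5\sum_{i\geq k_0}\eta_+^i\leq \frac{1}{2}K_0^{-4}K_5^{-1}\eta_-^m$ (the paper's (\ref{kzero})). Your $k_0\geq m+2$ is far too weak: it only guarantees the level-reduction hypothesis, not the smallness of the vertical tails, so the Second case collapses with your choice.

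\textbf{Gap 2: the final lower bound via Lemma \ref{type1} is not closed, and is more delicate than needed.} After reducing to $\tilde\gamma$ of level at most $m+2$, you acknowledge that extracting a horizontal subpath at level $m+2$ satisfying Lemma \ref{type1}'s hypotheses for $A^*$ is ``the principal technical difficulty,'' and only sketch it. The sketch does not handle the case where $\tilde\gamma$ immediately leaves level $m+2$ via a vertical edge and never comes back; in that case there simply is no level-$(m+2)$ horizontal excursion to feed into Lemma \ref{type1}. The paper's argument sidesteps this entirely: once you have $\gamma_{m+1}$ at level at most $m+1$ joining two distinct elements $B_{m+1},B'_{m+1}\in\mathcal{S}_{m+1}$, you look at its \emph{first edge} $e$ out of $B_{m+1}$. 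If $e$ is vertical, $\hat{\ell}_1(e)=K_5\pi^*(B_{m+1})$; if $e$ is horizontal to a neighbor $B''$, then (\ref{czero}) and (\ref{asterisque}) give $\pi^*(B'')\geq K_0^{-2}\pi^*(B_{m+1})$, so $\hat{\ell}_1(e)=\pi^*(B_{m+1})\wedge\pi^*(B'')\geq K_0^{-2}\pi^*(B_{m+1})$. Either way $\hat{\ell}_1(\gamma_{m+1})\geq K_0^{-2}\pi^*(B_{m+1})$, and then (\ref{asterisque2}) plus the adjacency $B_m\sim A$ for $A\in c(x,y)$ give the bound by $\pi(c(x,y))$. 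This is a one-edge argument that requires no subpath extraction and no appeal to Lemma \ref{type1}, which is already baked into the level-reduction Lemma \ref{mascorta}.

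Your identification $\pi^*(A^*)\gtrsim\pi(c(x,y))$ is correct and parallels what the paper does with $B_m$, so that part is fine. But as written the proof neither covers all admissible $\gamma$ nor carries out its own final step.
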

\begin{proof}
Let $m=|c(x,y)|$ be the level of the center of $x$ and $y$. We suppose $k\geq m+1$. By definition of $m$, we know that $d(x,y)\geq 7\kappa r_{m+1}$ and
\begin{equation}\label{distxy}
d(x_B,x_{B'})\geq d(x,y)-2\kappa r_k\geq 5\kappa r_{m+1}.
\end{equation}
Let $\gamma$ be an edge-path joining $B$ and $B'$. The idea is to inductively use Lemma \ref{mascorta} to find a path of level at most $m+1$, and of length smaller than or equal to that of $\gamma$. We divide the proof into two cases.

\begin{flushleft}
\underline{First case:} the path $\gamma$ is of level at most $k$.
\end{flushleft}

From (\ref{distxy}), we can apply Lemma \ref{mascorta} at least once. Set $\gamma_k=\gamma$, and suppose constructed the paths $\gamma_i$ for $i\in \{l,l+1,\ldots,k\}$, which verify the following properties:
\begin{itemize}
\item $\gamma_i$ is of level at most $i$ and joins the elements $B_i,B'_i\in\mathcal{S}_i$,
\item $B_i,B'_i$ are the parents of $B_{i+1},B'_{i+1}$ respectively, and
\item $\hat{\ell}_1(\gamma_i)\leq \hat{\ell}_1(\gamma_{i+1})$.
\end{itemize}
We denote $x_i$ and $y_i$ the centers of the elements $B_i$ and $B_i'$ respectively. Then, we recall that $\tau=\frac{a}{a-1}$,
\begin{equation*}
d(x_l,y_l)\geq d(x_k,y_k)-2\sum_{i=l}^{k-1}\kappa a^{-i}\geq 5\kappa a^{-(m+1)}-2\tau\kappa a^{-l}.
\end{equation*}
Using (\ref{constantes}), we have $d(x_l,y_l)\geq 4\kappa a^{-(l-1)}$ if $l\geq m+2$. But this allows us to apply, provided that $l\geq m+2$, at least one more time Lemma \ref{mascorta} to obtain a path $\gamma_{l-1}$. In conclusion, we know that there exists a path $\gamma_{m+1}$ of level at most $m+1$ joining $B_{m+1}$ and $B'_{m+1}$, the parents in $\mathcal{S}_{m+1}$ of $B$ and $B'$ respectively, with the property that $\hat{\ell}_1(\gamma_{m+1})\leq\hat{\ell}_1(\gamma)$. Furthermore, if $B_m\in\mathcal{S}_m$ is the parent of $B_{m+1}$, then
\begin{equation}\label{cercadelcentro}
d(x_m,x)\leq \sum_{i=m}^{k-1}\kappa a^{-i}+\kappa a^{-k}\leq \tau \kappa a^{-m},
\end{equation}
where we write $x_m$ for the center of $B_m$. Thus, if $A\in c(x,y)$, the fact that $d(x_A,x)\leq 8\kappa a^{-m}$ and (\ref{cercadelcentro}) gives $d(x_A,x_m)\leq \lambda\kappa a^{-m}$. That is, $e=(A,B_m)$ is an edge of $Z_d$ and therefore $\pi(B_m)\geq K_0^{-1}\pi(A)$. Finally, since $\hat{\ell}_1(\gamma_{m+1})\geq K_0^{-2}\pi^*(B_{m+1})\geq K_0^{-3 }K_5^{-1}\pi(B_m)$, we obtain
\begin{equation}\label{longgamma}
\hat{\ell}_1(\gamma)\geq \hat{\ell}_1(\gamma_m)\geq \frac{1}{K_0^{4}K_5}\pi(c(x,y)).
\end{equation}

\begin{flushleft}
\underline{Second case:} $\gamma$ is a path of level at least $k+1$.
\end{flushleft}
\begin{figure}
\centering
\setlength{\unitlength}{1cm}
\begin{picture}(9,6.5)
\includegraphics[scale=0.8]{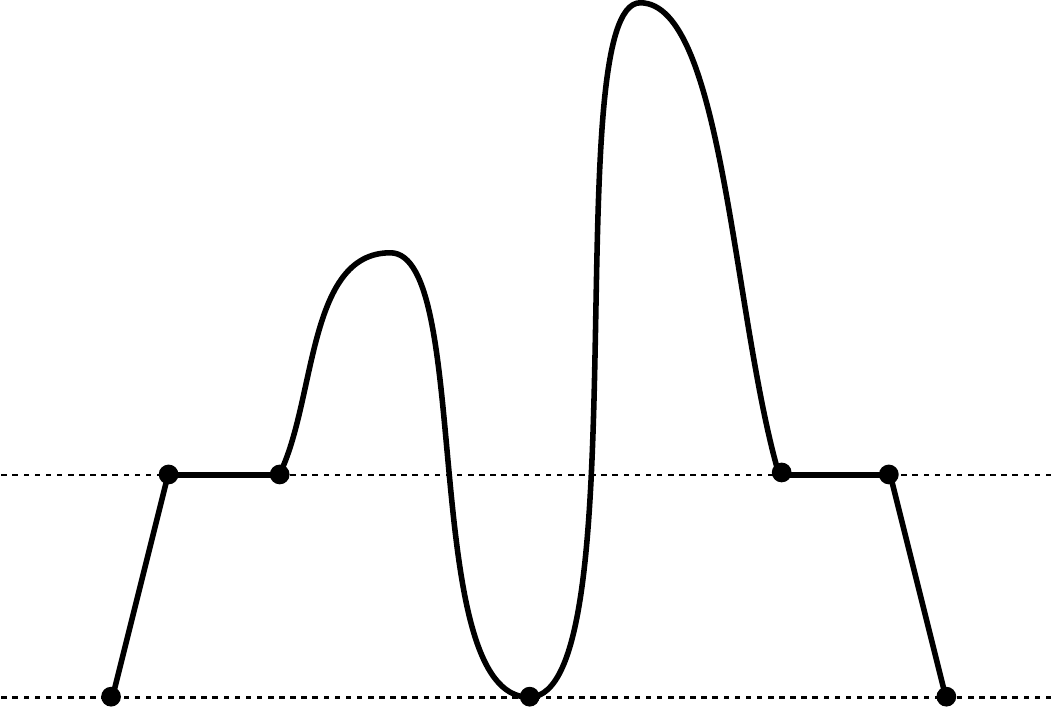}
\put(0.1,0){\footnotesize $n$}
\put(0.1,1.8){\footnotesize $k$}
\put(-1.5,0.3){\footnotesize $B'_n$}
\put(-7.5,0.3){\footnotesize $B_n$}
\put(-7.5,2.2){\footnotesize $A_k$}
\put(-1.4,2.2){\footnotesize $A'_k$}
\put(-2.7,1.6){\footnotesize $B'$}
\put(-6.2,1.6){\footnotesize $B$}
\put(-4.5,2.5){\footnotesize $\gamma$}
\end{picture}
\caption{\label{fig:figurecurvanivel}
Proof of Lemma \ref{borneinf}, second case.}
\end{figure}
Let $k_0$ be large enough such that 
\begin{equation}\label{kzero}
2K_5\sum_{i=k_0}^{\infty}\left(\eta_+\right)^i\leq \frac{1}{2K_0^{4}K_5}\left(\eta_-\right)^m,
\end{equation}
and suppose $k\geq k_0$. Let $n>k$ be the maximal level of a vertex of $\gamma$, and let $B_n,B'_n\in\mathcal{S}_n$ be such that $x\in B_n$ and $y\in B_n'$. Set $A_k=g(B_n)_k$ and $A'_k=g(B'_n)_k$. We write $g_x=[A_k,B_n]$ and $g_y=[A'_k,B'_n]$ for the corresponding geodesic segments. As usual, $x_k$, $x_n$ and $x'_k$ denote the centers of $B$, $B_n$ and $A_k$ respectively. By the triangle inequality and from (\ref{constantes}), we have
$$d(x_k',x_k)\leq d(x'_k,x_n)+d(x_n,x_k)\leq (\tau +2)\kappa a^{-k}\leq \lambda \kappa a^{-k}.$$
So $e_x=(A_k,B)$ is an edge of $Z_d$. Analogously, we see that $e_y=(B',A'_k)$ is an edge of $Z_d$. Then $\gamma_n=g_x\ast e_x\ast\gamma\ast e_y\ast g_y$ is a path of level at most $n$ joining $B_n$ and $B'_n$. From (\ref{longgamma}), we know that 
\begin{equation}\label{longgamman}
\hat{\ell}_1(\gamma_n)\geq \frac{1}{K_0^{4}K_5}\pi(c(x,y))\geq \frac{1}{K_0^{4}K_5}\left(\eta_-\right)^m.
\end{equation}
On the other hand, we have 
$$\hat{\ell}_1(g_x\ast e_x)\leq \left(\eta_+\right)^k+K_5\sum_{i=k+1}^{n}\left(\eta_+\right)^i\leq K_5\sum_{i=k}^{\infty}\left(\eta_+\right)^i.$$
The same computation holds for $e_y\ast g_y$; therefore, we obtain
\begin{align*}
\hat{\ell}_1(\gamma)&=\hat{\ell}_1(\gamma_n)-\hat{\ell}_1(g_x\ast e_x)-\hat{\ell}_1(e_y\ast g_y)\\
&\geq \hat{\ell}_1(\gamma_n)-2K_5\sum_{i=k}^{\infty}\left(\eta_+\right)^i\geq \frac{1}{2K_0^{4}K_5}\pi(c(x,y)).
\end{align*}
The last inequality holds by definition of $k_0$ and (\ref{longgamman}). This completes the proof of the lemma.
\end{proof}

We now give the proof of Theorem \ref{simplification}. We first show two lemmas, the first proof is inspired by the construction of doubling measures of Vol'berg and Koniagyn \cite{VK} (see also \cite{W} and \cite{He}).

\begin{lemma}\label{modificasion}
Suppose we have a function $\pi_0:\mathcal{S}_k\to (0,+\infty)$ which verifies 
\begin{equation}\label{hpi0}
\forall B\sim B'\in\mathcal{S}_k,\ \ \frac{1}{K}\leq \frac{\pi_0(B)}{\pi_0(B')}\leq K,
\end{equation}
where $K\geq 1$ is a constant. Suppose also that we have a function $\pi_1:\mathcal{S}_{k+1}\to (0,+\infty)$ which verifies the following property:
\begin{equation}\label{hpi1}
\forall B\in\mathcal{S}_{k+1},\exists A\in\mathcal{S}_{k}\text{ with }d(x_B,x_A)\leq 2\kappa r_k\text{ and } 1\leq \frac{\pi_0(A)}{\pi_1(B)}\leq K.
\end{equation}
Then there exists a function $\hat{\pi}_1:\mathcal{S}_{k+1}\to \R_+$ such that 
\begin{enumerate}
\item for all $B\sim B'\in\mathcal{S}_{k+1}$, $$\frac{1}{K}\leq \frac{\hat{\pi}_1(B)}{\hat{\pi}_1(B')}\leq K.$$
\item for all $B'\in\mathcal{S}_{k+1}$, we have $\hat{\pi}_1(B')=\pi_1(B')$ or there exists $B\sim B'\in\mathcal{S}_{k+1}$ such that $\hat{\pi}_1(B')=\frac{\pi_1(B)}{K}$. More precisely, in the second case, we have
$$\hat{\pi}_1\left(B'\right)=\frac{1}{K}\max\left\{\pi_1(B):B\sim B'\right\}.$$
\end{enumerate}
\end{lemma}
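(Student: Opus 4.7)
The plan is to define $\hat\pi_1$ explicitly by the single formula
$$\hat\pi_1(B'):=\max\left\{\pi_1(B'),\ \frac{1}{K}\max_{B\sim B'}\pi_1(B)\right\},$$
for each $B'\in\mathcal{S}_{k+1}$, and to check that the two conclusions of the lemma hold for this choice. Property (2) is then immediate: if the outer maximum is realized by $\pi_1(B')$ we are in the first alternative, otherwise $\hat\pi_1(B')=K^{-1}\pi_1(B)$ for some $B\sim B'$ with $\pi_1(B)>K\pi_1(B')$ (in particular $B\neq B'$), and then $\hat\pi_1(B')=K^{-1}\max\{\pi_1(B):B\sim B'\}$ as claimed. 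In contrast with the Vol'berg--Konyagin procedure, no iteration is needed; all the substance is in proving the Harnack estimate (1) in a single pass.

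The heart of the argument is the following a priori two-step Harnack inequality at the scale $k+1$: for every $B',B''\in\mathcal{S}_{k+1}$ with $B'\sim B''$ and every $C\sim B''$, one has $\pi_1(C)\le K^2\pi_1(B')$. To prove it I apply hypothesis (\ref{hpi1}) to pick $A_C,A_{B'}\in\mathcal{S}_k$ with $d(x_C,x_{A_C})\le 2\kappa r_k$, $d(x_{B'},x_{A_{B'}})\le 2\kappa r_k$, $\pi_1(C)\le \pi_0(A_C)\le K\pi_1(C)$ and $\pi_1(B')\le \pi_0(A_{B'})\le K\pi_1(B')$. The triangle inequality together with $C\sim B''\sim B'$ yields
$$d(x_{A_C},x_{A_{B'}})\le 4\kappa r_k+4\lambda\kappa r_{k+1}=\left(4+\frac{4\lambda}{a}\right)\kappa r_k,$$
and the standing assumptions $\lambda\ge 32$, $a\ge 6\kappa^2\lambda$ imply $4+4\lambda/a\le\lambda$, so $A_C\sim A_{B'}$ in $\mathcal{S}_k$. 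Hypothesis (\ref{hpi0}) now gives $\pi_0(A_C)\le K\pi_0(A_{B'})$, and chaining the three estimates produces $\pi_1(C)\le K^2\pi_1(B')$.

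With this in hand, the verification of (1) reduces to a short case analysis. By symmetry it suffices to show $\hat\pi_1(B'')\le K\hat\pi_1(B')$ for every pair $B'\sim B''$ in $\mathcal{S}_{k+1}$. If $\hat\pi_1(B'')=\pi_1(B'')$, then since $B''\sim B'$ the definition of $\hat\pi_1(B')$ gives $\hat\pi_1(B')\ge K^{-1}\pi_1(B'')$, whence $\hat\pi_1(B'')\le K\hat\pi_1(B')$. Otherwise $\hat\pi_1(B'')=K^{-1}\pi_1(C)$ for some $C\sim B''$ with $C\ne B''$, and the a priori estimate from the previous paragraph yields
$$\hat\pi_1(B'')=\frac{\pi_1(C)}{K}\le K\pi_1(B')\le K\hat\pi_1(B'),$$
completing the proof.

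I expect the principal obstacle to be the two-step Harnack estimate $\pi_1(C)\le K^2\pi_1(B')$: one must move from the level $k+1$ to the parent layer $\mathcal{S}_k$, invoke the Harnack property there, and pull back, while simultaneously controlling the metric displacement so that the two chosen parents really are neighbors in $G_d$. This is where the geometric hypothesis $\lambda\ge 32$ together with (\ref{constantes}) enters in an essential way; once this bound is established, the rest is bookkeeping.
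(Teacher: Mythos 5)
Your proof is correct and produces exactly the same modified function $\hat{\pi}_1$ as the paper's; the central observation --- that $C$ and $B'$ at graph distance two in $\mathcal{S}_{k+1}$ have neighbouring representatives $A_C\sim A_{B'}$ in $\mathcal{S}_k$ via hypothesis (\ref{hpi1}) and the size of $a$ and $\lambda$, hence $\pi_1(C)\leq K^2\pi_1(B')$ --- is precisely the one the paper uses to forbid directed $2$-paths. The only difference is presentational: the paper reaches the same formula through the Vol'berg--Konyagin directed-edge bookkeeping (orient an edge where Harnack fails, show there is no directed path of length two, deduce each vertex is a pure source or sink, and modify only the sinks), whereas you posit the closed formula $\hat{\pi}_1(B')=\max\bigl\{\pi_1(B'),\,K^{-1}\max_{B\sim B'}\pi_1(B)\bigr\}$ directly and verify (1) from the two-step Harnack bound, a modest but clean simplification of the same argument.
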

\begin{proof}
For each pair of neighbors $B,B'\in\mathcal{S}_{k+1}$, we check if the inequalities 
$$\frac{1}{K}\leq\frac{\pi_1(B)}{\pi_1(B')}\leq K,$$ hold or not. Only one of these two inequalities can be false; therefore, we put an oriented edge going from $B$ to $B'$ if $\pi_1(B)>K\pi_1(B')$. The fundamental property is the following: there is no oriented path of edges of length at least two. In fact, suppose that $B\sim B'$ and $B'\sim B''$ are such that $\pi_1(B)>K\pi_1(B')$ and $\pi_1(B')>K\pi_1(B'')$. Then we obtain $\pi_1(B)>K^2\pi_1(B'')$. Since $d\left(x_B,x_{B''}\right)\leq 4\lambda\kappa a^{-(k+1)}\leq 4\kappa a^{-k}$, if we write $A,A''\in\mathcal{S}_k$ such that 
$$d(x_B,x_A)\leq 2\kappa a^{-k}\text{ and }d(x_{B''},x_{A''})\leq 2\kappa a^{-k},$$ and also such that
$$1\leq \frac{\pi_0(A)}{\pi_1(B)}\leq K\text{ and } 1\leq \frac{\pi_0(A'')}{\pi_1(B'')}\leq K,$$
we obtain, from item (iv) of Lemma \ref{propgraphe}, that $A\sim A''$. But since 
$$\pi_0(A)\geq\pi_1(B)>K^2\pi_1(B'')\geq K\pi_0(A''),$$ we get $\pi_0(A)>K\pi_0(A'')$, which is impossible. As a consequence, for all $B\in\mathcal{S}_{k+1}$, the directed edges which have $B$ as an extremity, or all enter or all leave the vertex $B$. We modify $\pi_1$ only in that subset of vertices $B'$ for which all directed edges enter. See Figure \ref{fig:directed}.

\begin{figure}
\centering
\setlength{\unitlength}{1cm}
\begin{picture}(5,5)
\includegraphics{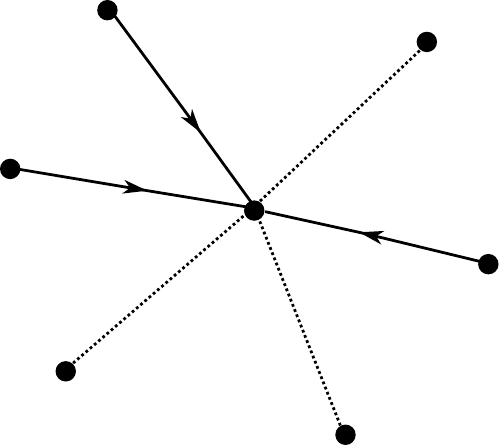}
\put(-0.5,4){\footnotesize $B''$}
\put(-2.5,2.7){\footnotesize $B'$}
\put(-5,2.4){\footnotesize $B$}
\end{picture}
\caption{Typical situation for a vertex $B'\in\mathcal{S}_{k+1}$ with entering edges.
\label{fig:directed}}
\end{figure}

Let $B'\in\mathcal{S}_{k+1}$ be such that there exists at least one entering directed edge. To define $\hat{\pi}_1(B')$ we proceed in the following way. Let $B_i\sim B',i=1,\ldots,l$, be all the neighbors of $B'$, and let $B\in\{B_1,\ldots,B_l\}$ be such that
$$\pi_1(B)\geq \pi_1(B_i),\ \ i=1,\ldots, l.$$
We replace $\pi_1(B')$ by $\frac{\pi_1(B)}{K}$. In other words, we replace it by $\hat{\pi}_1(B')=\alpha\pi_1(B')$, where 
$$\alpha=\frac{\pi_1(B)}{K\pi_1(B')}>1.$$
Thus, for all $i\in\{1,\ldots,l\}$, we have
$$\frac{\pi_1(B_i)}{\hat{\pi}_1(B')}=K\cdot\frac{\pi_1(B_i)}{\pi_1(B)}\leq K,$$
by definition of $B$.
To see the other inequality, let $A_i\in\mathcal{S}_k$ be such that $d(x_{A_i},x_{B_i})\leq 2\kappa a^{-k}$, and such that 
$$1\leq \frac{\pi_0(A_i)}{\pi_1(B_i)}\leq K.$$
We denote $A$ the element corresponding to $B$. Then
$$\frac{\pi_1(B_i)}{\hat{\pi}_1(B')}=K\cdot\frac{\pi_1(B_i)}{\pi_1(B)}\geq\frac{\pi_0(A_i)}{\pi_0(A)}\geq\frac{1}{K}.$$
Finally, we obtain $\hat{\pi}_1(B')$ which verifies
$$\frac{1}{K}\leq\frac{\pi_1(B)}{\hat{\pi}_1(B')}\leq K,$$ for all $B\sim B'$, and such that there exists $B\sim B'$ with $\hat{\pi}_1(B')=\frac{\pi_1(B)}{K}$. That completes the proof of the lemma.
\end{proof}

\begin{lemma}\label{pasageconv}
Let $G=\left(V,E\right)$ be a graph with valence bounded by a constant $K$ and let $p>0$. Let $\Gamma$ be a family of edge-paths of $G$ and let $p>0$. Suppose that $\tau:V\to\R_+$ is a function verifying
$$\sum_{i=1}^{N-1}\tau\left(z_i\right)\geq 1,\ \text{for all path $\gamma=\left\{\left(z_i,z_{i+1}\right)\right\}_{i=1}^{N-1}\in \Gamma$}.$$ 
Then there exists $\tilde{\tau}:V\to\R_+$ such that
\begin{equation}\label{conv}
\sum\limits_{i=1}^{N-1}\tilde{\tau}^*\left(z_i\right)\wedge \tilde{\tau}^*\left(z_{i+1}\right)\geq 1\ \text{for all path $\gamma=\left\{\left(z_i,z_{i+1}\right)\right\}_{i=1}^{N-1}\in\Gamma$},
\end{equation}
where $\tilde{\tau}^*(x)=\min\left\{\tilde{\tau}(y):y\sim x\right\}$, and such that
\begin{equation}
\sum\limits_{z\in V}\tilde{\tau}\left(z\right)^p\leq 2^pK^2\cdot \sum\limits_{z\in V}\tau(z)^p.
\end{equation}
\end{lemma}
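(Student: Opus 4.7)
My plan is to define $\tilde{\tau}$ as a thickening of $\tau$ over the combinatorial $2$-ball around each vertex, with a factor of $2$ to match the claimed constant. Concretely, I would set
$$\tilde{\tau}(y):=2\max\bigl\{\tau(w):w\in N_2(y)\bigr\},$$
where $N_2(y):=\{w\in V:\exists\,v\in V\text{ with }v\sim y\text{ and }w\sim v\}$ is the closed combinatorial $2$-neighborhood of $y$. Thickening only over the $1$-neighborhood would be insufficient: an off-path neighbor $v$ of some $z_i$ with $\tau(z_i)=0$ can drive $\tilde{\tau}(v)$, and hence $\tilde{\tau}^*(z_i)$, to zero even when $\tau(z_{i\pm 1})$ is large; the $2$-ball forces every $v\sim z_i$ to see the values $\tau(z_{i\pm 1})$ as well.

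For the length condition (\ref{conv}), I would fix an edge $(z_i,z_{i+1})$ of a path $\gamma\in\Gamma$ and an arbitrary $v\sim z_i$, then observe that $z_i,z_{i+1}\in N_2(v)$: the choice $u=v$ with $w=z_i$ witnesses $z_i\in N_2(v)$, while $u=z_i$ with $w=z_{i+1}$ witnesses $z_{i+1}\in N_2(v)$. Hence $\tilde{\tau}(v)\geq 2\max(\tau(z_i),\tau(z_{i+1}))$ for every $v\sim z_i$, so that $\tilde{\tau}^*(z_i)\geq 2\max(\tau(z_i),\tau(z_{i+1}))$; the symmetric argument yields the same bound for $\tilde{\tau}^*(z_{i+1})$. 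Therefore $\tilde{\tau}^*(z_i)\wedge\tilde{\tau}^*(z_{i+1})\geq 2\tau(z_i)$, and summing over $i$ gives
$$\sum_{i=1}^{N-1}\tilde{\tau}^*(z_i)\wedge\tilde{\tau}^*(z_{i+1})\geq 2\sum_{i=1}^{N-1}\tau(z_i)\geq 2\geq 1,$$
using the hypothesis on $\tau$.

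For the volume bound, I would expand the maximum as a sum and swap the order of summation:
$$\sum_{y\in V}\tilde{\tau}(y)^p\leq 2^p\sum_{y\in V}\sum_{w\in N_2(y)}\tau(w)^p=2^p\sum_{w\in V}\tau(w)^p\cdot\#\{y:w\in N_2(y)\}.$$
Symmetry of $\sim$ gives $\{y:w\in N_2(y)\}=N_2(w)$, and the valence bound yields $\#N_2(w)\leq K^2$ (at most $K$ neighbors $v$ of $w$, each contributing at most $K$ further neighbors). This produces the desired estimate $\sum\tilde{\tau}^p\leq 2^pK^2\sum\tau^p$. The proof is otherwise elementary counting; the only real conceptual obstacle is the choice of $N_2$ over $N_1$, isolated in the first paragraph.
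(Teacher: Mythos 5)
Your proof is correct and follows essentially the same route as the paper: thicken $\tau$ over the combinatorial $2$-ball (the paper's $V_2(x)$ is your $N_2(y)$), multiply by $2$, check admissibility via the observation that both endpoints of an edge lie in the $2$-ball of any vertex adjacent to either endpoint, and bound the $p$-volume by counting multiplicities $\#N_2(w)\leq K^2$. The only cosmetic difference is that the paper bounds the sum by $\sum\frac{\tau(z_i)+\tau(z_{i+1})}{2}\geq\frac12$ while you bound directly by $\sum\tau(z_i)\geq 1$; both suffice, and your opening paragraph explaining why $N_1$ would not work is a helpful addition not present in the paper's proof.
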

\begin{proof}
For $x\in V$, let $V_2(x)=\left\{y\in V:\exists\ z\in V\text{ s.t. }y\sim z\sim x\right\}$ be the ``combinatorial'' ball of radius $2$ in the graph $G$. We define $\hat{\tau}:V\to\R_+$ by setting $$\hat{\tau}(x)=\max\left\{\tau(y):y\in V_2(x)\right\}.$$ If $\gamma=\left\{\left(z_i,z_{i+1}\right)\right\}_{i=1}^{N-1}$ is a path of  $\Gamma$,  for $i\in\left\{1,\ldots,N-1\right\}$, we write $A_i$ the vertices of $G$ which are neighbors of $z_i$ or neighbors of $z_{i+1}$. Then
\begin{align*}
\sum\limits_{i=1}^{N-1}\hat{\tau}^*\left(z_i\right)\wedge \hat{\tau}^*\left(z_{i+1}\right)=\sum\limits_{i=1}^{N-1}\min\left\{\hat{\tau}(z):z\in A_i\right\}.
\end{align*}
If $y\in V_2(x)$, then $\hat{\tau}(y)\geq \tau(x)$. But this implies that 
\begin{equation}
\min\left\{\hat{\tau}(z):z\in A_i\right\}\geq\min\left\{\hat{\tau}(z):z\in V_2\left(z_i\right)\right\}\geq\max\left\{\tau\left(z_i\right),\tau\left(z_{i+1}\right)\right\},
\end{equation}
since $A_i$ is contained in $V_2\left(z_{i}\right)$ and in $V_2\left(z_{i+1}\right)$. Therefore,
\begin{equation}
\sum\limits_{i=1}^{N-1}\hat{\tau}^*\left(z_i\right)\wedge \hat{\tau}^*\left(z_{i+1}\right)\geq \sum\limits_{i=1}^{N-1}\frac{\tau\left(z_i\right)+\tau\left(z_{i+1}\right)}{2}\geq \frac{1}{2}.
\end{equation}
On the other hand, the cardinal number of $V_2(x)$ is bounded from above by $K^2$ for all $x\in V$. So
\begin{align*}
\sum\limits_{x\in V}\hat{\tau}(x)^p\leq\sum\limits_{x\in S}\sum\limits_{z\in V_2(x)}\tau(z)^p\leq K^2\sum\limits_{x\in V}\tau(x)^p.
\end{align*}
It is enough to take $\tilde{\tau}=2\cdot\hat{\tau}$. This finish the proof of the lemma.
\end{proof}

\begin{proof}[Proof of Theorem \ref{simplification}]
Take $\eta_0\in(0,1)$ which will be fixed later, and define
$$\eta_-=\left(\eta_0\cdot M_1^{-1}\right)^{1/p}\in (0,1),$$ 
where $M_1$ is a constant, depending only on $a$, $\lambda$, $\kappa$ and the constant $K_D$, which bounds from above the cardinal number of $T_k(B)$ for all $k\geq 0$ and $B\in\mathcal{S}_k$. We define the function $\tau=\left(\sigma^p+\eta_-^p\right)^{1/p}\geq\eta_-$, which also verifies item (S1). From inequality (\ref{sumachica}), for all $B\in\mathcal{S}_k$ and $k\geq 0$, we have
\begin{equation}
\sum_{B'\in T_k(B)}\tau(B')^p\leq\sum_{B'\in T_k(B)}\sigma(B')^p+\eta_-^pM_1\leq 2\cdot\eta_0.
\end{equation}
For $B\in\mathcal{S}_k$, let $V_{2,k}(B)=\left\{B'\in\mathcal{S}_k:\exists\ B''\in\mathcal{S}_k\text{ s.t. }B\sim B''\sim B'\right\}$ be the ``combinatorial'' ball of radius $2$ in the graph $\mathcal{S}_k$. For $k\geq 0$ and $B\in\mathcal{S}_k$, we define 
$$\tilde{\tau}(B)=2\cdot \max\left\{\tau(B'):B'\in V_{2,k}(B)\right\}.$$
By Lemma \ref{pasageconv}, we obtain a function $\tilde{\tau}$ satisfying condition (H3'), bounded from below by $\eta_-$ and such that
\begin{equation}
\sum_{B'\in T_k(B)}\tilde{\tau}(B')^p\leq 2^{p+1}\cdot M_2^2\cdot \eta_0,
\end{equation}
for all $B\in\mathcal{S}_k$ and $k\geq 0$. Here, the constant $M_2$, that only depends on $\lambda$, $\kappa$ and the doubling constant $K_D$, bounds from above the cardinal number of horizontal $2$-neighbors of any vertex $B\in\mathcal{S}_k$; i.e. elements in $\mathcal{S}_k$ and at combinatorial distance at most $2$ from $B$. 
\newline
Let $K=\eta_-^{-1}$, we construct a function $\hat{\rho}:\mathcal{S}\to\R_+$ verifying 
\begin{enumerate}
\item $\hat{\rho}\geq \tilde{\tau}$, and
\item (H2) with the constant $K$.
\end{enumerate}
Moreover, we will see that by construction, $\hat{\rho}$ also verifies
\begin{enumerate}
\item[(3)] $\hat{\rho}(B)\leq \max\left\{\tilde{\tau}(B'):B'\sim B\right\}$.
\end{enumerate}
We will construct $\hat{\rho}$ by defining it inductively on each $\mathcal{S}_k$. We set $\hat{\rho}(w)=1$, and since $\eta_-\leq\tilde{\tau}\leq1$ we can set $\hat{\rho}_1=\tilde{\tau}|_{\mathcal{S}_1}$.  Suppose constructed $\hat{\rho}_i:\mathcal{S}_i\to\R_+$ verifying items 1 and 2 for $i=1,2,\ldots,j$, let's construct $\hat{\rho}_{j+1}:\mathcal{S}_{j+1}\to\R_+$ using Lemma \ref{modificasion}. With the same notation as in the lemma, we denote for $A\in\mathcal{S}_j$,
$$\pi_0(A)=\prod_{i=1}^{j}\hat{\rho}_i\left(g(A)_i\right),$$
and for $B\in\mathcal{S}_{j+1}$,
$$\pi_1(B)=\tilde{\tau}(B)\pi_0\left(g(B)_j\right).$$
Since $K=\eta_-^{-1}$, and for all $B\in\mathcal{S}_{j+1}$, we have $d\left(x_B,x_{g(B)_j}\right)\leq \kappa a^{-j}$, we see that the hypothesis of Lemma \ref{modificasion} are verified. Let $\hat{\pi}_1:\mathcal{S}_{j+1}\to\R_+$ be the application given by the lemma. 

Let $B\in\mathcal{S}_{j+1}$, from the item (2) of Lemma \ref{modificasion}, we have two possibilities for $\hat{\pi}_1(B)$: it is equal to $\pi_1(B)$, or there exists some $B'\in\mathcal{S}_{j+1}$ such that $B'\sim B$ and $\hat{\pi}_1(B)=\frac{\pi_1(B')}{K}$. Equivalently, we can write $\hat{\pi}_1(B)=\hat{\rho}_{j+1}(B)\pi_0(g(B)_j)$, where $\hat{\rho}_{j+1}(B)$ is equal to $\tilde{\tau}(B)$, or it is equal to $\alpha\tilde{\tau}(B)$, with
$$\alpha=\frac{1}{K}\frac{\tilde{\tau}(B')\pi_0(g(B')_j)}{\tilde{\tau}(B)\pi_0(g(B)_j)}>1.$$ 
We remark that 
$$\alpha=\frac{\tilde{\tau}(B')\pi_0(g(B')_j)}{K\tilde{\tau}(B)\pi_0(g(B)_j)}\leq\frac{\tilde{\tau}(B')}{\tilde{\tau}(B)}.$$
Thus, we obtain
$$\hat{\rho}_{j+1}(B)=\alpha\tilde{\tau}(B)\leq \tilde{\tau}(B').$$
In any case, the function $\hat{\rho}_{j+1}$ verifies
$$\tilde{\tau}(B)\leq \hat{\rho}_{j+1}(B)\leq \max\limits_{B'\sim B}\tilde{\tau}(B'),$$
for all $B\in\mathcal{S}_{j+1}$. This shows the existence of the function $\hat{\rho}_{j+1}:\mathcal{S}\to\R_+$ which verifies items (1), (2) and (3) above. Finally, define $\hat{\rho}:\mathcal{S}\to(0,+\infty)$ by setting $\hat{\rho}|_{\mathcal{S}_k}:=\hat{\rho}_k$.

We now estimate, using item (3) above, the sum of $\hat{\rho}^p$ over $T_k(B)$. Since for all $k\geq1$ and all $B\in\mathcal{S}_k$, the cardinal number of the set $\{C:C\sim B\}$ is bounded from above by the constant $M_2$, we obtain
\begin{equation}\label{sumarhohat}
\sum\limits_{B'\in T_k(B)}\hat{\rho}(B')^p\leq\sum\limits_{B'\in T_k(B)}\sum\limits_{B''\sim B'}\tilde{\tau}(B'')^p\leq M_2\sum\limits_{C\sim B}\sum\limits_{B'\in T_k(C)}\tilde{\tau}(B')^p\leq 2^{p+1}M_2^4\cdot\eta_0=M_3\eta_0.
\end{equation}

We fix $\eta_0=\left(2M_3\right)^{-1}$, which only depends on $\lambda$, $\kappa$ and the doubling constant $K_D$. Therefore, the sum (\ref{sumarhohat}) is smaller than $1/2$.

We still have to modify $\hat{\rho}$ taking into account (H4). Note that for each level $k\geq 1$, it makes sense to ask about conditions (H1), (H2), and (H3'), since they are concerned with properties of the function $\rho$ up to this level. To start, we can simply normalize $\hat{\rho}_1$ so that the sum is equal to $1$. Since we divide by a quantity smaller than $1$, and the same for all $B\in\mathcal{S}_1$, we preserve also the conditions (H1), (H2) and (H3').

Let now $k>1$. We should remark that if $B\in\mathcal{S}_{k-1}$, then by item (vii) of Lemma \ref{propgraphe}, we know that all neighbors of an element $B'$ in $\mathcal{S}_{k}$ are descendants of $B$ if $x_B$ belongs to the ball $B\left(x_{B'},\kappa r_{k}\right)$. For each $B\in \mathcal{S}_{k-1}$, we chose one descendant $C_B\in \mathcal{S}_k$ with the above property. We denote $T_{k-1}^*(B)=T_{k-1}(B)\backslash \{C_B\}$, and we call $C_B$ the center of $T_{k-1}(B)$. For $B\in \mathcal{S}_{k-1}$, let $\omega_B\in[1,+\infty)$ be such that
$$\left(\omega_B\hat{\rho}_{k}(C_B)\right)^p+\sum\limits_{B'\in T_{k-1}^*(B)}\hat{\rho}_{k}(B')^p=1.$$ 
The fact that the sum (\ref{sumarhohat}) is strictly smaller than $1$ justifies the existence of the number $\omega(B)$. We define $\rho_{k}:\mathcal{S}_{k}\to \R_+$ by setting
$$\rho_{k}(B')=\begin{cases}\omega_{B}\hat{\rho}_{k}(C_B) & \text{ if } B'=C_B\text{ for some }B\in\mathcal{S}_{k-1}.\\
\hat{\rho}_{k}(B') & \text{ otherwise.}\end{cases}$$
Since $\omega_{B}\geq 1$, conditions (H1) and (H3') are verified. For (H1), it's enough to take $\eta_+=1-\eta_-$, because $\#T_k(B)\geq 2$. By the choice of $\omega_{B}$, we also have condition (H4) with constant $K_2=1$. 

Let us show that the condition (H2) is also verified. Recall that for $B\in \mathcal{S}_{k-1}$, all neighbors of $C_B$ in $\mathcal{S}_{k}$ belong to $T_{k-1}(B)$. Let $A,A'\in\mathcal{S}_k$ be such that $A\sim A'$, and let $0\leq n\leq k-1$ be the biggest positive integer such that $g(A)_n=g(A')_n$. Since $A\sim A'$, we have $g(A)_i\sim g(A')_i$ for all $i\in\{n+1,\ldots,k\}$. Therefore, for all $i\in\{n+2,\ldots,k\}$, neither $g(A)_i$ nor $g(A')_i$ can be centers. Otherwise, since they are neighbors, 
they would have the same parent, which is in contradiction with the definition of $n$. This implies that $\rho_i\left(g(A)_i\right)=\hat{\rho}_i\left(g(A)_i\right)$ and $\rho_i\left(g(A')_i\right)=\hat{\rho}_i\left(g(A')_i\right)$ for all $i\in\{n+2,\ldots,k\}$. For $i=n+1$, only one of them can be a center. If neither is a center we have
\begin{equation}\label{rapport}
\frac{\prod\limits_{i=n+1}^k\rho_i\left(g(A)_i\right)}{\prod\limits_{i=n+1}^k\hat{\rho}_i\left(g(A)_i\right)}=\frac{\prod\limits_{i=n+1}^k\rho_i\left(g(A')_i\right)}{\prod\limits_{i=n+1}^k\hat{\rho}_i\left(g(A')_i\right)}=1.
\end{equation}
If for example $g(A)_{n+1}$ is a center, the quotient (\ref{rapport}) is equal to $\omega_{g(A)_{n+1}}$. 
Since for all center $C_B$, we have $\eta_-\leq\omega_{B}\rho_{i}(C_B)\leq 1$, we see that in any case the quotient (\ref{rapport}) is between $1$ and $K^2$. Therefore we obtain (H2) with $K_0=K^2$. This completes the proof of the proposition.
\end{proof}

\section{Ahlfors regular conformal dimension and combinatorial modulus}\label{dimqas}
\subsection{The critical exponent\label{secexpcrit}}

Let $G=\left(V,E\right)$ be a graph and let $\Gamma$ be a family of subsets of $V$. Consider a function $\rho:V\to \R_+$ and for $\gamma\in\Gamma$, define its $\rho$-length as
\begin{equation}\label{longcomb}
\ell_\rho\left(\gamma\right)=\sum\limits_{v\in\gamma}\rho\left(v\right).
\end{equation}
For $p> 0$, we denote the $p$-volume of $\rho$ by 
\begin{equation}\label{pmasse}
\mathrm{Vol}_p(\rho)=\sum\limits_{v\in V}\rho\left(v\right)^p.
\end{equation}
Thus the $p$-combinatorial modulus of $\Gamma$ is by definition
\begin{equation}\label{eq:modcomb}
\Mod_p\left(\Gamma,G\right)=\inf\limits_{\rho}\mathrm{Vol}_p(\rho),
\end{equation}
where the infimum is taken over all functions $\rho:V\to\R_+$ which are $\Gamma$-admissible, i.e. $\ell_\rho\left(\gamma\right)\geq 1$ for all $\gamma\in\Gamma$. We remark that if $p\in (0,1)$, then $\Mod_p\left(\Gamma,G\right)\geq 1$ unless $\Gamma$ is empty.

\begin{remark}
This definition is a discretization of the classical notion of \emph{conformal modulus} from complex analysis, see \cite{Ah}. See also \cite{H1} for a detailed exposition on the combinatorial modulus.
\end{remark}

We recall that we suppose $X$ doubling of constant $K_D\geq 1$, and uniformly perfect of constant $K_P\geq 1$. In particular, the conformal gauge $\mathcal{J}_{AR}(X,d)\neq\emptyset$. We fix $\kappa>1$ and $b>1$. For each $k\geq1$, let $\mathcal{U}_k$ be a finite covering of $X$ satisfying eq.\,(\ref{almostball1}) and (\ref{almostball2}) with $b$ in the place of $a$. We write $\mathcal{U}:=\bigcup_k\mathcal{U}_k$. For each $k\geq 1$, we define the graph $G_k$ as the nerve of $\mathcal{U}_k$, i.e. the vertices of $G_k$ are the elements of $\mathcal{U}_k$ and we put an edge between $B$ and $B'$ if $\lambda \cdot B\cap \lambda\cdot B'\neq\emptyset$, where $\lambda$ is a constant (recall (\ref{constantes})). We use the same notation as in the previous sections. 

\begin{definition}[Combinatorial modulus]\label{modcomb}
Let $p>0$ and $L>1$, we define
\begin{equation}\label{module}
M_{p,k}\left(L\right):=\sup\limits_{B\in\mathcal{U}}\ \Mod_p\left(\Gamma_{k,L}(B),G_{|B|+k}\right),
\end{equation}
where, for $k\geq 1$ and $B\in\mathcal{U}$, we denote by $\Gamma_{k,L}(B)$ the family of paths $\gamma=\left\{B_i\right\}_{i=1}^N$ of $G_{|B|+k}$ such that $z_1$, the center of $B_1$, belongs to $B$ and $z_N$, the center of $B_N$, belongs to $X\setminus L\cdot B$. See Figure \ref{figuremodcomb}.
\end{definition}
\begin{figure}
\centering
\setlength{\unitlength}{1cm}
\begin{picture}(6,6)
\includegraphics{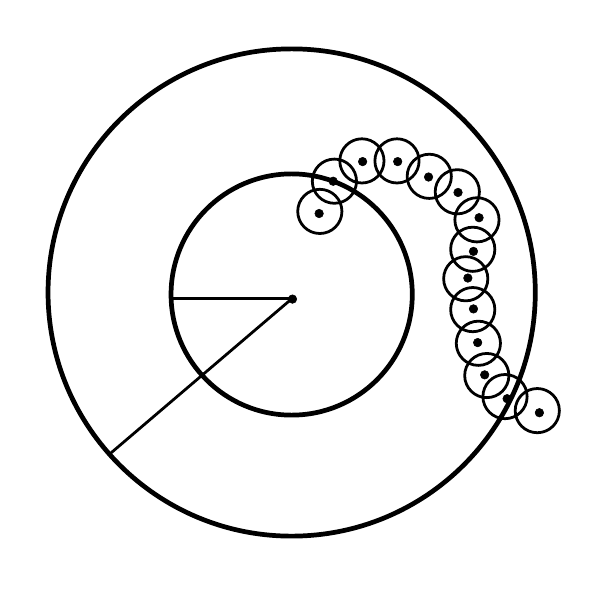}
\put(-0.5,1.3){\footnotesize $z_N$}
\put(-3,3.4){\footnotesize $z_1$}
\put(-2.9,2.7){\footnotesize $B\in \mathcal{U}_i$}
\put(-4.5,1.3){\footnotesize $L\cdot \kappa b^{-i}$}
\put(-5.1,2.8){\footnotesize $\asymp b^{-i}$}
\end{picture}
\caption{\label{figuremodcomb} Definition of the combinatorial modulus $M_{p,k}(L)$ (Definition \ref{module}). In the figure, $B$ is an element of $\mathcal{U}$ and $z_1,z_N$ are the centers of the extremities of a path $\gamma$ in $\Gamma_{k,L}(B)$. The scale of the covering is $|B|+k$ so $k$ represents the scale relative to that of $B$. The number $L$ represents the relative diameter of the paths $\gamma$. The modulus $M_{p,k}(L)$ takes into account all moduli of the ``annuli'' associated to the elements of $\mathcal{U}$.}
\end{figure}

In this first part, $L$ is considered as a fixed parameter. We remark that $M_{p,k}\left(L\right)<+\infty$ for all $k\geq 1$, since the number of elements in $\mathcal{U}_{|B|+k}$ that intersect $L\cdot B$ is bounded above by some constant, which only depends on $k$, and therefore not on $B\in \mathcal{U}$. 

We study the asymptotic behavior of the sequence $\left\{M_{p,k}\left(L\right)\right\}_k$ when $k$ tends to infinity, and its dependence on $p$. We define 
\begin{equation}\label{modulelimite}
M_p\left(L\right)=\liminf\limits_{k\to+\infty}M_{p,k}\left(L\right).
\end{equation}
For fixed $k$, the function $p\mapsto M_{p,k}\left(L\right)$ is non-increasing, since an optimal function for the combinatorial modulus, which always exists, is less than or equal to $1$. This important fact implies that the set of $p\in(0,\infty)$ such that $M_p\left(L\right)=0$ is an interval.

\begin{definition}[The critical exponent]\label{defexpcrit}
We define the \emph{critical exponent} of the combinatorial modulus by setting
\begin{equation}\label{expcritcomb}
Q_N\left(L\right)=\inf\left\{p\in(0,+\infty):M_p\left(L\right)=0\right\}.
\end{equation}
\end{definition}
\begin{remark}[Remark 1.]
Later we will consider another critical exponent closely related to the topology of $X$, so it is important to note that $Q_N$ is defined in purely combinatorial terms, i.e. we only use the combinatorial modulus on the nerves $G_k$ of the sequence of coverings $\mathcal{U}_k$.
\end{remark}
\begin{remark}[Remark 2.]
If $p\in (0,1)$, then $M_{p,k}(L)\geq 1$ unless $\Gamma_{k,L}(B)$ is empty for all $B\in\mathcal{U}$. Conversely, if the curve families $\Gamma_{k,L}(B)$ are empty, for all $k$ sufficiently large we also have $M_p(L)=0$ for all $p>0$. Therefore, $Q_N(L)\notin (0,1)$, and $Q_N(L)=0$ if and only if $X$ is uniformly disconnected (for a definition, see Chapter 15 of  \cite{DS}).
\end{remark}

\subsection{Proof of Theorem \ref{maintheo}}\label{proofmain}

We can prove now the first inequality between $Q_N\left(L\right)$ and the Ahlfors regular conformal dimension of $X$ (compare with \cite{HP2} Corollaire 3.3). We will prove that if $p>\dim_{AR}X$, then $M_p\left(L\right)=0$. Therefore, $Q_N\left(L\right)\leq\dim_{AR}X$. In particular, since $X$ is doubling and uniformly perfect, we have $Q_N\left(L\right)<+\infty$.
\begin{proof}[Proof of $Q_N\left(L\right)\leq\dim_{AR}X$]\label{expcritdimconfar}
Suppose that $\dim_{AR}X<q<p$, and let $\theta $ be an Ahlfors $q$-regular distance in the gauge of $X$. We denote $\mu$ the $q$-dimensional Hausdorff measure of $\left(X,\theta \right)$ and $\eta:\R_+\to\R_+$ the distortion function of $id:\left(X,d\right)\to\left(X,\theta \right)$. Fix some element $B\in\mathcal{U}$ and let $k\geq 1$, we set $i=|B|$. Define $\rho:\mathcal{U}_{|B|+k}\to\R_+$ by setting
\begin{equation}\label{defderho}
\rho\left(B'\right)=\begin{cases}\left(\frac{\mu\left(B'\right)}{\mu\left(\left(L+1\right)\cdot B\right)}\right)^{1/q}&\text{if }B'\cap \overline{L\cdot B}\neq\emptyset.\\
0&\text{otherwise.}
\end{cases}
\end{equation}
Then
\begin{align*}
\sum\limits_{B'\in\mathcal{U}_{|B|+k}}\rho\left(B'\right)^p\leq\max\limits_{B'\cap \overline{L\cdot B}\neq\emptyset}\rho\left(B'\right)^{p-q}\cdot\sum\limits_{B'\cap \overline{L\cdot B}\neq\emptyset}\rho\left(B'\right)^{q}.
\end{align*}
We write $\diam_\theta$ for the diameter, and $B_\theta(s)$ for a ball of radius $s$, both in the distance $\theta$. Since $X$ is uniformly perfect,
$$\diam\left(\left(L+1\right)\cdot B\right) \geq \left(L+1\right)K_P^{-1}\kappa b^{-i}.$$ From the diameter distortion formula for quasisymmetric maps (see eq.\,(\ref{distordiam})), for all element $B'$ in $\mathcal{U}_{i+k}$ such that $B'\cap \overline{L\cdot B}\neq\emptyset$, we have
\begin{align*}
\rho\left(B'\right)=&\left(\frac{\mu\left(B'\right)}{\mu\left(\left(L+1\right)\cdot B\right)}\right)^{1/q}\asymp\frac{\diam_\theta B'}{\diam_\theta \left(L+1\right)\cdot B}\\
\lesssim&\ \eta\left(2\cdot\frac{\diam B'}{\diam \left(L+1\right)\cdot B}\right)\leq\eta\left(\frac{4K_P}{\left(L+1\right)\cdot b^{k}}\right):=\eta_k.
\end{align*}
There exists a constant $K\geq 1$, which depends only on $\eta$ and $\kappa$, such that for any $B'\in \mathcal{U}_{i+k}$, there is a ball $B_\theta(s)$ for the distance $\theta$ such that 
$$B_\theta(s)\subset B\left(x_{B'},\frac{1}{\kappa} b^{-(i+k)}\right)\subset B'\subset B_\theta(Ks).$$ 
Since the balls $\left\{B\left(x_{B'},\kappa^{-1} b^{-(i+k)}\right):B'\in\mathcal{U}_{i+k}\right\}$ are pairwise disjoint, 
the same holds for the balls $B_\theta(s)$. Also, since the union of the elements $B'$ such that $B'\cap \overline{L\cdot B}\neq\emptyset$, is contained in $\left(L+1\right)\cdot B$, we obtain
\begin{align*}
\sum\limits_{B'\cap \overline{L\cdot B}\neq\emptyset}\rho\left(B'\right)^{q}=&\frac{1}{\mu\left(\left(L+1\right)\cdot B\right)}\cdot\sum\limits_{B'\cap \overline{L\cdot B}\neq\emptyset}\mu\left(B'\right)\leq\frac{1}{\mu\left(\left(L+1\right)\cdot B\right)}\cdot\sum\limits_{B'\cap \overline{L\cdot B}\neq\emptyset}\mu\left(B_\theta(Ks)\right)\\
\lesssim&\frac{1}{\mu\left(\left(L+1\right)\cdot B\right)}\cdot\sum\limits_{B'\cap \overline{L\cdot B}\neq\emptyset}\mu\left(B_\theta(s)\right)\leq 1.
\end{align*}
We now look at the admissibility condition. Let $\gamma=\left\{B_j\right\}_{j=1}^N\in\Gamma_{k,L}(B)$, we can suppose that $B_j\cap \overline{L\cdot B}\neq\emptyset$ for all $j$. We denote the center of $B_j$ by $z_j$. Since for each $j\in \left\{1,\ldots,N-1\right\}$ we have $\theta \left(z_j,z_{j+1}\right)\leq \diam_\theta \left(\lambda\cdot B_j\right)+\diam_\theta \left(\lambda\cdot B_{j+1}\right)$, we obtain
\begin{align*}
\sum\limits_{j=1}^N\rho\left(B_j\right)&=\sum\limits_{j=1}^N\left(\frac{\mu\left(B_j\right)}{\mu\left(\left(L+1\right)\cdot B\right)}\right)^{1/q}\\
&\asymp\sum\limits_{j=1}^N\frac{\diam_\theta  B_j}{\diam_\theta  \left(L+1\right)\cdot B}\gtrsim\frac{\theta \left(z_1,z_N\right)}{2\cdot\diam_\theta  \left(L+1\right)\cdot B}\geq c,
\end{align*}
where $c>0$ is a constant that depends only on $\eta$, $\lambda$, $\kappa$, $K_P$ and $L$.
Therefore, we finally obtain
$$M_{p,k}\lesssim \eta_k^{\ p-q},$$
which tends to zero when $k$ tends to infinity. This completes the proof of the inequality.
\end{proof}

If $L'\geq L\geq 1$, then $M_{p,k}\left(L'\right)\leq M_{p,k}\left(L\right)$ for all $k\geq 1$; therefore, $Q_N\left(L'\right)\leq Q_N\left(L\right)$. We start by showing in the following lemma that, in fact, $Q_N(L)$ does not depend on $L>1$.

\begin{lemma}[Independence on $L$]\label{indepdel}
Let $1<L\leq L'$ and $p>0$. There exists an integer $l\geq 0$ and a constant $K_7\geq 1$, which depend only on $L$, $L'$ and $\kappa$, such that for all $k\geq 1$, we have
$$M_{p,l+k}\left(L\right)\leq K_7\cdot M_{p,k}\left(L'\right).$$
In particular, $Q_N(L)=Q_N(L')$ for all $L$ and $L'$. 
\end{lemma}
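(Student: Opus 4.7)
The inequality $M_{p,k}(L')\leq M_{p,k}(L)$ already gives $Q_N(L')\leq Q_N(L)$, so the real content is the reverse: making the outer radius smaller from $L'$ down to $L$ can be compensated by descending $l$ levels in the multi-scale covering. The strategy is, for each $B\in\mathcal{U}_i$, to cover $B$ at level $i+l$ by a controlled number of sub-elements $B'$, to show that every path in $\Gamma_{l+k,L}(B)$ already belongs to $\Gamma_{k,L'}(B')$ for one such $B'$, and then to glue the optimal admissible weight functions associated to each $B'$ to produce an admissible weight function for $\Gamma_{l+k,L}(B)$.

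First I would choose $l$. Fix $B\in\mathcal{U}_i$ and consider the subfamily $\mathcal{B}'(B)\subset\mathcal{U}_{i+l}$ of elements $B'$ with $B'\cap B\neq\emptyset$. Since $X$ is doubling and the centers of $\mathcal{U}_{i+l}$ are $\kappa^{-1}b^{-(i+l)}$-separated, the cardinality $N=\#\mathcal{B}'(B)$ is bounded by a constant depending only on $\kappa$, $K_D$ and $l$. For $B'\in\mathcal{B}'(B)$ a point $y\in L'\cdot B'$ satisfies
\[
d(y,x_B)\leq L'\kappa b^{-(i+l)}+\kappa b^{-i}(1+b^{-l})=\kappa b^{-i}\bigl((L'+1)b^{-l}+1\bigr),
\]
so if we pick the smallest integer $l$ with $(L'+1)b^{-l}+1\leq L$, then $L'\cdot B'\subset L\cdot B$ for every $B'\in\mathcal{B}'(B)$. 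This is the only place the values of $L$ and $L'$ enter; $l$ depends on $L,L',\kappa,b$ only.

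Next I would prove the key combinatorial inclusion: any $\gamma=\{B_j\}_{j=1}^N\in\Gamma_{l+k,L}(B)$ lies in $\Gamma_{k,L'}(B')$ for some $B'\in\mathcal{B}'(B)$. Indeed, $\gamma$ is a path in $G_{i+l+k}$ with starting center $z_1\in B$; since $\mathcal{U}_{i+l}$ covers $X$, there exists $B'\in\mathcal{U}_{i+l}$ with $z_1\in B'$, and this forces $B'\in\mathcal{B}'(B)$. The scale matches because $|B'|+k=i+l+k$. Finally, $z_N\notin L\cdot B$ together with $L'\cdot B'\subset L\cdot B$ gives $z_N\notin L'\cdot B'$, so $\gamma\in\Gamma_{k,L'}(B')$.

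To conclude, for each $B'\in\mathcal{B}'(B)$ pick an optimal admissible function $\rho_{B'}\colon \mathcal{U}_{i+l+k}\to\R_+$ for $\Gamma_{k,L'}(B')$ and set $\rho:=\max_{B'\in\mathcal{B}'(B)}\rho_{B'}$. By the previous paragraph $\rho$ is admissible for $\Gamma_{l+k,L}(B)$, while
\[
\mathrm{Vol}_p(\rho)\leq\sum_{B'\in\mathcal{B}'(B)}\mathrm{Vol}_p(\rho_{B'})\leq N\cdot M_{p,k}(L').
\]
Taking the supremum over $B\in\mathcal{U}$ gives $M_{p,l+k}(L)\leq K_7\cdot M_{p,k}(L')$ with $K_7=N$. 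The equality $Q_N(L)=Q_N(L')$ follows immediately: if $p>Q_N(L')$ then $\liminf_k M_{p,k}(L')=0$, hence $\liminf_k M_{p,l+k}(L)=0$, so $M_p(L)=0$ and $p\geq Q_N(L)$. The main (and only) technical point is choosing $l$ correctly so that the geometric inclusion $L'\cdot B'\subset L\cdot B$ holds uniformly for all $B'\in\mathcal{B}'(B)$; everything else is bookkeeping.
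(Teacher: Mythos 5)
Your proof is correct and follows essentially the same strategy as the paper: choose $l$ depending only on $L,L',\kappa,b$ so that the outer annulus condition at level $i+l$ is implied by the one at level $i$, cover the inner ball $B$ by the boundedly many $B'\in\mathcal{U}_{i+l}$ meeting it, take the maximum of the optimal admissible functions for the families $\Gamma_{k,L'}(B')$, and bound the $p$-volume by the sum. The only cosmetic difference is that you verify the stronger geometric inclusion $L'\cdot B'\subset L\cdot B$ (yielding the condition $(L'+1)b^{-l}\leq L-1$), while the paper argues directly that $z_N\notin L'\cdot A$ from the lower bound $d(z_1,z_N)\geq (L-1)\kappa b^{-i}$ under the essentially equivalent condition $b^{-l}<\frac{L-1}{2L'}$.
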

\begin{proof}
Let $1<L\leq L'$ and $B\in \mathcal{U}_i$ for some $i\geq 1$. We take $l\geq 0$ the smallest integer such that $b^{-l}<\frac{L-1}{2L'}$, and let 
$$\mathcal{U}_{i+l}(B):=\left\{A\in \mathcal{U}_{i+l}: A\cap B\neq \emptyset\right\}.$$ 
Let $\gamma=\left\{B_j\right\}_{j=1}^N$ be a path of $\Gamma_{l+k,L}\left(B\right)$, and denote $z_j$ the center of $B_j$. If $A$ is an element of $\mathcal{U}_{i+l}(B)$ such that $z_1$ belongs to $A$, then $\gamma$ is a path in $\Gamma_{l+k,L'}(A)$. In fact, by the choice of $l$ the point $z_N$ does not belongs to $L'\cdot A$, since $d\left(z_1,z_N\right)\geq (L-1)\kappa b^{-i}$.

For each $A\in \mathcal{U}_{i+l}(B)$, let $\rho_A:\mathcal{U}_{i+l+k}\to\R_+$ be an optimal function for $\Gamma_{l+k,L'}(A)$. We define $\rho:\mathcal{U}_{i+l+k}\to \R_+$ by setting
$$\rho(B')=\max\left\{\rho_A(B'):A\in\mathcal{U}_{i+l}(B)\right\}.$$
Therefore, $\rho$ is $\Gamma_{l+k,L}(B)$-admissible. Remark that there exists a constant $K_7$, which depends only on $l$, $\kappa$ and the doubling constant of $X$, that bounds from above the number of elements in $\mathcal{U}_{i+l}(B)$. So we obtain 
$$\mathrm{Vol}_p\left(\rho\right)\leq \sum\limits_{A\in\mathcal{U}_{i+l}(B)}\Mod_p\left(\Gamma_{l+k,L'}(A),G_{i+l+k}\right)\leq K_7\cdot M_{p,k}\left(L'\right).$$
Therefore, $M_{p,l+k}\left(L\right)\leq K_7\cdot M_{p,k}\left(L'\right)$.
\end{proof}
We fix $L=2$, and we consider $M_{p,k}:=M_{p,k}\left(2\right)$, $M_p:=M_p\left(2\right)$ and $Q_N:=Q_N(2)$. We can prove now the main result of this Section.

\begin{proof}[Proof of the inequality $\dim_{AR}X\leq Q_N$]
Let $p>0$ such that $M_p=0$, applying Theorem \ref{controldim} we will show that $\dim_{AR}X\leq p$. Let $n_0\geq 1$ be large enough so that $a:=b^{n_0}$ verifies (\ref{constantes}), and that $M_{p,n_0}\leq \eta$, where $\eta\in (0,1)$ is a number that will be fixed later.

We take $\mathcal{S}_k=\mathcal{U}_{k\cdot n_0}$. For simplicity, we write $G_k$ in the place of $G_{k\cdot n_0}$, and $\Gamma(B)$ for the family of paths in $G_{k+1}$ which ``join'' $B$ and $X\setminus 2\cdot B$. Therefore, we have $\Mod_p\left(\Gamma(B),G_{k+1}\right)\leq \eta$ for all $B\in\mathcal{S}$. We fix the genealogy $\mathcal{V}$ as that of eq.\,(\ref{gencanonique}), i.e. we set 
$$V_k(B)=\left\{y\in X: d(y,x_B)=\mathrm{dist}(y,X_k)\right\}.$$
Using the fact that the combinatorial modulus is small, we construct a function $\rho:\mathcal{S}\to (0,1)$ which verifies the hypotheses (S1) and (S2) of Proposition \ref{simplification}. In fact, for all $B\in\mathcal{S}$, there exists $\sigma_B:\mathcal{S}_{k+1}\to\R_+$ such that: 
\begin{enumerate}
\item if we denote by $V_B=\left\{B'\in\mathcal{S}_{k+1}:B'\cap 3\cdot B\neq\emptyset\right\}$, then $\sigma_B(B')=0$ if $B'\notin V_B$.
\item for any path $\gamma=\left\{B_i\right\}_{i=1}^N$ of level $k+1$ such that $z_1\in B$ and $z_N\in X\setminus 2\cdot B$ ---we write as usual $z_i$ the center of $B_i$--- we have
$$\sum\limits_{i=1}^N\sigma_B\left(B_i\right)\geq 1,$$
\item and $\sum\limits_{B'\in\mathcal{S}_{k+1}}\sigma_B(B')^p\leq \eta$.
\end{enumerate}
To define $\rho$, we start by setting $\sigma_{k+1}:\mathcal{S}_{k+1}\to \R_+$ to be
$$\sigma_{k+1}(B')=\max\left\{\sigma_A(B'):A\in\mathcal{S}_k\right\}.$$
Since $\sigma_{k+1}\geq \sigma_B$, item 2 above still holds if we replace $\sigma_B$ by $\sigma_{k+1}$. Using the item (1) and the fact that $T_k(B)\subset V_B$ for all $B\in\mathcal{S}$, we obtain
\begin{align*}
\sum\limits_{B'\in T_k(B)}\sigma_{k+1}(B')^p=&\sum\limits_{B'\in T_k(B)}\max\left\{\sigma_{A}\left(B'\right)^p:A\in\mathcal{S}_k\right\}\leq 
\sum\limits_{B'\in T_k(B)}\sum\limits_{A:B'\in V_A}\sigma_{A}\left(B'\right)^p\\
\leq&\sum\limits_{A:V_B\cap V_A\neq\emptyset}\sum\limits_{B'\in V_A}\sigma_{A}\left(B'\right)^p\leq K_8\cdot\eta,
\end{align*}
where $K_8$ is a constant, which depends only on $\kappa$ and the doubling constant of $X$, such that $\left|\left\{A\in\mathcal{S}_k:V_A\cap V_B\neq \emptyset\right\}\right|\leq K_8$ for all $k\geq 1$ and all $B\in\mathcal{S}_k$.
Therefore, to apply Proposition \ref{simplification} it is enough to choose $\eta\leq K_8^{-1}\eta_0$. This ends the proof of Theorem \ref{maintheo}.
\end{proof}
\begin{remark}[Remark 1.]
One consequence of the proof of Theorem \ref{maintheo} is the following: if $p>Q_N$, i.e. $M_p=\liminf_k M_{p,k}=0$, we have shown that $\dim_{AR}X\leq p$. Therefore, from the proof of the first inequality, we have $\lim_k M_{q,k}=0$ for all $q>p$. In other words, we can replace the lower limit by the limit in the definition of $Q_N$.
\end{remark}
\begin{remark}[Remark 2.]
From the remark that follows Definition \ref{defexpcrit}, we have that $\dim_{AR}X\notin (0,1)$, and is equal to zero if and only if $X$ is uniformly disconnected. In that case, the AR conformal dimension is not attained. Compare with \cite{Ko}.
\end{remark}

\subsection{Comparison with the moduli on the tangent spaces\label{sectangents}}

The purpose of this section is to show that when $X$ is $p$-regular, $M_p$ is bounded from above by the $p$-analytic modulus of curve families in the weak tangent spaces of $X$.

We start by recalling some definitions, for a detailed exposition we refer to \cite{MT}. A sequence of nonempty closed subsets $\{F_n\}_n$ of a metric space $(Z,d)$ converges in the sense of Hausdorff to a closed set $F\subset Z$ if
\begin{equation}\lim\limits_{n\to\infty}\sup\limits_{z\in F_n\cap B(x,R)} \mathrm{dist}(z,F) = 0 \text{ and }
\lim\limits_{n\to\infty}\sup\limits_{z\in F\cap B(x,R)}\mathrm{dist}(z,F_n)=0,
\end{equation}
for all $x\in Z$ and $R>0$. 
\begin{definition}[Convergence of metric measure spaces]
A pointed sequence of complete metric measure spaces $\{(Z_n, d_n, \mu_n, p_n)\}$ converges to a pointed complete metric measure space $(Z,d,\mu,p)$, if there exists a pointed metric space $(\mathcal{Z},D,q)$, and isometric embeddings $\iota_n:Z_n\to\mathcal{Z}$ and $\iota:Z\to\mathcal{Z}$, with $\iota_n(p_n)=\iota(p)=q$ for all $n\geq 0$, such that $\{\iota_n(Z_n)\}$ converges in the sense of Hausdorff to $\iota(Z)$, and the sequence of measures $\{(\iota_n)_*\mu_n\}$ weakly converges to $\iota_*\mu$. If we ignore measures, we obtain the Gromov-Hausdorff convergence of metric spaces.
\end{definition}

If $X$ is a doubling space, for any sequence $\{r_n\}$ of scales and any sequence of points $\{x_n\}$ of $X$, the family $\left\{\left(X,x_n,r_n^{-1}d\right)\right\}$ is relatively compact in the Gromov-Hausdorff topology. The limit points are called weak tangent spaces of $X$, and tangent spaces when $\left\{x_n\right\}$ is constant and $\left\{r_n\right\}$ tends to zero. 

If $\left(X,d,\mu\right)$ is Ahlfors regular of dimension $p>0$ and $\left(X_\infty,d_\infty, x_\infty\right)$ is a weak tangent space of  $X$, with sequence of scales $\left\{r_n\right\}$, then $X_\infty$ is also regular of dimension $p$, where the $p$-dimensional Hausdorff measure is comparable to a weak limit of $\{r_n^{-p}\mu\}$, which we denote $\mu_\infty$. We remark that if $p\in (0,1)$, then $X$ is uniformly disconnected (see also Theorem 5.1.9 of \cite{MT}). We also recall that if $\Gamma$ is a curve family of $X$, then the $p$-analytic modulus of $\Gamma$ is by definition 
$$\Mod_p\left(\Gamma\right)= \inf\limits_{\rho}\int_X\rho^{p}\ d\mu,$$
where the infimum is taken over all Borel measurable functions $\rho:X\to\R_+\cup\{+\infty\}$ which are $\Gamma$-admissible (see \cite{He}). The analytic moduli in the weak tangent spaces of $X$ are always defined using this measure $\mu_\infty$.

From now on we suppose that $X$ is $p$-regular. Let $(X_\infty,d_\infty,x_\infty)$ be a weak tangent space of $X$. We consider the family $\Gamma(x_\infty)$ of curves which join $B(x_\infty,1)$ and $X\setminus B(x_\infty,2)$. 
\begin{definition}[Moduli on the tangent spaces]
We define 
$$M_p^T:=\sup\left\{\Mod_p\left(\Gamma(x_\infty)\right):\left(X_\infty,d_\infty,x_\infty\right)\text{ is a weak tangent space of }X\right\}.$$
\end{definition}

The following proposition shows that the combinatorial modulus is dominated by the analytical moduli on the weak tangent spaces of $X$.
\begin{proposition}\label{semicont}
There exists a constant $K_9$, which depends only on $\kappa$ and the doubling constant of $X$, such that $M_p\leq K_9\cdot M_p^T$. 
\end{proposition}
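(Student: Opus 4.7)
The strategy is to realise $M_p$ along a sequence of rescalings, pass to a weak tangent space, and transfer almost-optimal combinatorial admissible functions to an analytic admissible density. We may assume $M_p>0$, otherwise the statement is trivial. Fix $\epsilon>0$. By definition of $M_p=\liminf_k M_{p,k}$, choose a subsequence $k_j\to+\infty$ and balls $B_j\in\mathcal{U}_{i_j}$ with
$$\Mod_p\bigl(\Gamma_{k_j,2}(B_j),G_{i_j+k_j}\bigr)\ \geq\ M_p-\epsilon.$$
Set $r_j=b^{-i_j}$, the characteristic radius of $B_j$. Because $X$ is doubling, the pointed metric measure spaces $(X,x_{B_j},r_j^{-1}d,r_j^{-p}\mu)$ admit a subsequence converging in the pointed Gromov--Hausdorff sense to a weak tangent space $(X_\infty,x_\infty,d_\infty,\mu_\infty)$; all spaces sit isometrically in a common pointed space $\mathcal{Z}$. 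In the rescaled metric, $B_j\subset B(x_{B_j},\kappa)$ and $2\cdot B_j\subset B(x_{B_j},2\kappa)$, while the covering $\mathcal{U}_{i_j+k_j}$ has mesh $\asymp b^{-k_j}\to 0$.

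The next step is to convert the combinatorial data into analytic densities. For each $j$, pick an almost-optimal admissible weight $\rho_j:\mathcal{U}_{i_j+k_j}\to\R_+$ with $\mathrm{Vol}_p(\rho_j)\leq M_p+1$, and define
$$\tilde{\rho}_j\ :=\ \frac{1}{c_0\,\kappa\,b^{-(i_j+k_j)}}\sum_{B'\in\mathcal{U}_{i_j+k_j}}\rho_j(B')\,\chi_{V_{i_j+k_j}(B')},$$
where the $V_{i_j+k_j}(B')$ are the Voronoi-type partition cells of (\ref{gencanonique}) and $c_0$ is a uniform constant, depending only on $\kappa$ and $K_D$, chosen so that any rectifiable curve entering and leaving $V_{i_j+k_j}(B')$ contributes at least $\rho_j(B')$ to $\int\tilde{\rho}_j\,ds$. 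Using the inclusions (\ref{fundaminclusions}) and the $p$-regularity of $\mu$, one has
$$\int_X \tilde{\rho}_j^{\,p}\,d\mu\ \asymp\ \sum_{B'\in\mathcal{U}_{i_j+k_j}}\rho_j(B')^p\ \leq\ M_p+1,$$
with implicit constants depending only on $\kappa$, $K_D$ and the regularity constant of $\mu$. After rescaling, the densities $r_j\tilde{\rho}_j$ on $(X,r_j^{-1}d,r_j^{-p}\mu)$ have uniformly bounded $L^p$ norms, and by Mazur's lemma one may extract a subsequence for which suitable convex combinations converge strongly in $L^p$ to a limit $\tilde{\rho}_\infty\in L^p(X_\infty,\mu_\infty)$, with $\int_{X_\infty}\tilde{\rho}_\infty^{\,p}\,d\mu_\infty\lesssim M_p+1$.

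The core step is to show that, up to multiplication by a uniform constant, $\tilde{\rho}_\infty$ is admissible for $\Gamma(x_\infty)$. Given a rectifiable curve $\gamma_\infty\in\Gamma(x_\infty)$, one approximates it in $\mathcal{Z}$ by polygonal curves $\gamma_j$ in the rescaled $X$ whose vertices are centres of consecutive neighbouring elements of $\mathcal{U}_{i_j+k_j}$, and whose endpoints lie in $B_j$ respectively outside $2\cdot B_j$. Each such vertex sequence is, by construction, a path in $\Gamma_{k_j,2}(B_j)$, so combinatorial admissibility of $\rho_j$ gives $\sum_i\rho_j(B_{j,i})\geq 1$; the calibration of $c_0$ promotes this to $\int_{\gamma_j}\tilde{\rho}_j\,ds\geq c_1$ for a uniform $c_1>0$. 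A Fuglede--type lemma, applied to the strongly convergent rescaled densities, then upgrades this to $\int_{\gamma_\infty}\tilde{\rho}_\infty\,ds\geq c_1$ for $p$-a.e.~$\gamma_\infty\in\Gamma(x_\infty)$, so that $c_1^{-1}\tilde{\rho}_\infty$ is admissible outside a curve family of vanishing $p$-modulus. Therefore $\Mod_p(\Gamma(x_\infty))\lesssim M_p+1$, and letting $\epsilon\to 0$ along the extraction yields $M_p\leq K_9\cdot M_p^T$ with $K_9$ depending only on $\kappa$ and $K_D$.

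The main obstacle is precisely this admissibility transfer. Two points require care: first, one must verify that every rectifiable curve joining $B(x_\infty,1)$ to $X_\infty\setminus B(x_\infty,2)$ is indeed approximated, in the Hausdorff sense inside $\mathcal{Z}$, by vertex-polygonal curves extracted from $\mathcal{U}_{i_j+k_j}$, which is where the doubling and uniform perfectness of $X$ (inherited by $X_\infty$) intervene through the shape control (\ref{almostball1})--(\ref{almostball2}); second, the Fuglede-style passage from weak $L^p$ convergence of $r_j\tilde{\rho}_j$ to integrability of $\tilde{\rho}_\infty$ along a.e.~curve requires choosing the convex-combination sequence carefully so that the lower bound $\int_{\gamma_j}\tilde{\rho}_j\,ds\geq c_1$ survives in the limit. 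Once these two points are settled, all remaining constants absorb into a single $K_9=K_9(\kappa,K_D)$, as claimed.
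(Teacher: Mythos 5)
Your argument runs in the wrong direction and so the final step is a non sequitur. You take near-optimal \emph{combinatorial} admissible weights $\rho_j$, build from them a piecewise-constant analytic density, pass to an $L^p$ limit $\tilde{\rho}_\infty$ on $X_\infty$, and show that (a multiple of) $\tilde{\rho}_\infty$ is $\Gamma(x_\infty)$-admissible with $\int \tilde{\rho}_\infty^{\,p}\,d\mu_\infty\lesssim M_p+1$. All this yields is the \emph{upper} bound $\Mod_p\left(\Gamma(x_\infty)\right)\lesssim M_p+1$ for one tangent space. To conclude $M_p\leq K_9\cdot M_p^T$ you would need a \emph{lower} bound on some $\Mod_p\left(\Gamma(x_\infty)\right)$ in terms of $M_p$, and an upper bound on a single analytic modulus gives no information of that kind about $M_p^T=\sup_\infty\Mod_p\left(\Gamma(x_\infty)\right)$. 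Exhibiting one particular admissible density only bounds an infimum from above; it cannot bound it from below.

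The paper's proof discretises in the opposite direction. Assume $M_p> K\cdot M_p^T$; pick $i_k$ and $B_k\in\mathcal{U}_{i_k}$ with $\Mod_p\left(\Gamma_k(B_k),G_{i_k+k}\right)\geq K\cdot M_p^T+\delta$ and extract a weak tangent space $X_\infty$ along the scales $r_k=b^{-i_k}$. Take a continuous $\Gamma(x_\infty)$-admissible density $\rho$ with $\mathrm{Vol}_p(\rho)\leq M_p^T+\epsilon$ and discretise it by
$$\rho_k(A)=\frac{3}{2}\inf\{\rho(y):y\in\lambda\cdot A\}\cdot \diam_Z(\lambda\cdot A),\qquad A\in\mathcal{U}_{i_k+k}.$$
The crucial input, borrowed from Proposition B.2 of [H1] and Proposition 3.2.4 of [KL], is that for $k$ large this $\rho_k$ is $\Gamma_k(B_k)$-admissible, while the disjointness of the balls $B(x_A,\kappa^{-1}b^{-(i_k+k)})$ and the $p$-regularity give $\mathrm{Vol}_p(\rho_k)\leq M\cdot\mathrm{Vol}_p(\rho)\leq M(M_p^T+\epsilon)$ with $M$ depending only on $\kappa$ and $K_D$. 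Since $\Mod_p\left(\Gamma_k(B_k)\right)\leq\mathrm{Vol}_p(\rho_k)$, this contradicts $\Mod_p\left(\Gamma_k(B_k)\right)\geq K M_p^T+\delta$ once $K>M$. In short: to dominate the combinatorial modulus by an analytic one you must feed an analytic admissible density into the combinatorial problem, not the other way around. The case $p\in(0,1)$ is dispatched separately in the paper ($X$ is then uniformly disconnected and both sides vanish).

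Even setting aside the direction issue, the calibration of $c_0$ in your construction is fragile: a curve may enter and leave a Voronoi cell $V_{i_j+k_j}(B')$ near its boundary, contributing arbitrarily little length, so the claimed pointwise lower bound $\int_{\gamma_j}\tilde{\rho}_j\,ds\gtrsim\sum_i\rho_j(B_{j,i})$ does not hold in general without further argument.
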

\begin{proof}
If $p\in (0,1)$, the inequality trivially holds, because $X$ is uniformly disconnected, and therefore, both $M_p$ and $M_p^T$ are null. So we can suppose that $p\geq 1$. We does not provide full details of the proof, because it consists of small modifications of arguments that appear in Section 3 of \cite{KL} and Appendix B of \cite{H1}.

Let $C\geq 1$, $\epsilon >0$ and suppose that $M_p> K\cdot M_p^T$. This means that there exists $k_0\geq 1$ such that for all $k\geq k_0$, there exists $i=i_k\geq 1 $ and $B=B_k\in\mathcal{U}_{i}$ such that 
$$\Mod_p\left(\Gamma_{k}\left(B\right),G_{i+k}\right)\geq K\cdot M_p^T+\delta, \text{ where }\delta>0.$$
Let $r_k=b^{-i_k}$, and consider $\left(X_\infty,d_\infty,x_\infty\right)$ a limit point of the sequence $\left\{\left(X,r_k^{-1}d,x_k\right)\right\}_k$. We fix a compact metric space $Z$, and isometric embeddings $\iota: \overline{B}\left(x_\infty,3\right)\to Z$ and $\iota_k:\left(\overline{B}\left(x_k,3\right),r_k^{-1}d\right)\to Z$ for each $k$, where we denote $x_k$ the center of $B$. We identify the curve family $\Gamma\left(x_\infty\right)$ and the family of paths $\Gamma_{k}(B)$ with its images by the embedding $\iota$ and $\iota_k$. Analogously, we identify the measure $\mu_\infty$ with its image by $\iota$.

Consider $\rho:Z\to \R_+$ a continuous $\Gamma\left(x_\infty\right)$-admissible function such that $\mathrm{Vol}_p(\rho)\leq \Mod_p\left(\Gamma\left(x_\infty\right)\right)+\epsilon\leq M_p^T+\epsilon$. We can suppose that $\rho\geq m>0$. Define $\rho_k:\mathcal{U}_{i+k}\to\R_+$ by ---we also identify $\mathcal{U}_{i+k}$ with its image by $\iota_k$---
\begin{equation}
\rho_k(A)=\frac{3}{2}\inf\left\{\rho(y):y\in \lambda\cdot A\right\}\diam_Z\left(\lambda\cdot A\right).
\end{equation}
For big enough $k$, the function $\rho_k$ is $\Gamma_{k}(B)$-admissible (see \cite{H1} Proposition B.2 and \cite{KL} Proposition 3.2.4). Since the balls $\left\{B\left(x_{A},\kappa^{-1}b^{-(i+k}\right):A\in\mathcal{U}_{i+k}\right\}$ are pairwise disjoint, there exists a constant $M$, which depends only on $\kappa$ and the Ahlfors regularity constant of $Z$, such that $\mathrm{Vol}_p\left(\rho_k\right)\leq M\cdot\mathrm{Vol}_p(\rho)\leq M\cdot \left(M_p^T+\epsilon\right)$. Therefore, for all $\epsilon>0$, we have
$$K\cdot M_p^T+\delta\leq M\cdot \left(M_p^T+\epsilon\right),$$
which is impossible if $K>M$. This finish the proof.
\end{proof}

\subsection{Positiveness of moduli at the critical exponent}
In this section, we show that the sequence $\left\{M_{p,k}\right\}_k$ admits a strictly positive lower bound when $p=Q_N$. Indeed, this is a consequence of the fact that the sequence $\left\{M_{p,k}\right\}_k$ satisfies a weak sub-multiplicative inequality on $k$. The proof is an adaptation of arguments from \cite{BK} Section 3, Proposition 3.12. The difference here is that we don't suppose $X$ to be approximately self-similar.

We fixe $L\geq 2$, and we also denote $M_{p,k}=M_{p,k}\left(L\right)$. For $i,k\geq 1$ and $B\in\mathcal{U}_i$, we denote by $\Gamma_{k}'(B)$ the family of paths in $G_{i+k}$ which join $L_1\cdot B$ and $L_2\cdot B$, where 
$$L_1=1+\frac{1}{b} \text{ and } L_2=L-\frac{1}{b}.$$
Define $M'_{p,k}$ in the same way as $M_{p,k}$, replacing $\Gamma_{k}(B)$ by $\Gamma_{k}'(B)$ in the definition \ref{module}. We have the following lemma:
\begin{lemma}\label{lme:smultiplicative}
There exists a constant $K_{10}\geq 1$, which depends only on $p$, $L$, $\kappa$ and the doubling constant $K_D$, such that
\begin{equation}\label{smultiplicative}
M_{p,k+l}\leq K_{10}\cdot M'_{p,k}\cdot M_{p,l}.
\end{equation}
for all $k$ and $l$.
\end{lemma}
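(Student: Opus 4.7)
The plan is to prove the sub-multiplicative inequality by a standard two-scale decomposition: I will patch together an optimal weight at the coarse scale $i+k$ with optimal weights at the fine scale $l$ living above each intermediate ball. Fix $i\geq 1$ and $B_0\in\mathcal{U}_i$; the goal is to bound $\Mod_p(\Gamma_{k+l}(B_0),G_{i+k+l})$. Choose $\rho^*:\mathcal{U}_{i+k}\to\R_+$ optimal for $\Gamma'_k(B_0)$, and for each $B'\in\mathcal{U}_{i+k}$ meeting $\overline{L\cdot B_0}$ choose $\rho_{B'}:\mathcal{U}_{|B'|+l}\to\R_+$ optimal for $\Gamma_l(B')$, supported (after truncation) on balls $A$ with $x_A\in L\cdot B'$. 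Define
$$\rho(A):=\sum_{B'\in\mathcal{U}_{i+k}}\rho^*(B')\,\rho_{B'}(A),\qquad A\in\mathcal{U}_{i+k+l}.$$

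Next I verify admissibility of $\rho$ for $\Gamma_{k+l}(B_0)$ via a breakpoint construction. Given $\gamma=\{A_j\}_{j=1}^N\in\Gamma_{k+l}(B_0)$, set $j_0=1$ and pick $B'_0\in\mathcal{U}_{i+k}$ with $d(x_{A_1},x_{B'_0})\leq\kappa b^{-(i+k)}$. Inductively, let $j_{s+1}$ be the smallest $j>j_s$ with $x_{A_j}\notin L\cdot B'_s$, and pick $B'_{s+1}$ with center within $\kappa b^{-(i+k)}$ of $x_{A_{j_{s+1}}}$, stopping at the first $S$ with $x_{B'_S}\notin L_2\cdot B_0$. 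Three geometric checks are needed: (i) each sub-arc $\gamma_s=(A_{j_s},\ldots,A_{j_{s+1}})$ lies in $\Gamma_l(B'_s)$, since its first center is in $B'_s$ and its last is outside $L\cdot B'_s$; (ii) consecutive $B'_s,B'_{s+1}$ are neighbors in $G_{i+k}$, because
$$d(x_{B'_s},x_{B'_{s+1}})\leq(L+1+\lambda b^{-l})\kappa b^{-(i+k)}\leq\lambda\kappa b^{-(i+k)}$$
for $\lambda\geq 32$; (iii) $x_{B'_0}\in L_1\cdot B_0$ since $\kappa b^{-(i+k)}\leq\kappa b^{-(i+1)}$ makes $L_1=1+1/b$ the right shrinkage, while $x_{B'_S}\notin L_2\cdot B_0$ by construction, so that the breakpoint sequence lies in $\Gamma'_k(B_0)$. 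Admissibility then telescopes:
$$\sum_{j=1}^N\rho(A_j)\geq\sum_{s=0}^{S-1}\rho^*(B'_s)\sum_{A_j\in\gamma_s}\rho_{B'_s}(A_j)\geq\sum_{s=0}^{S-1}\rho^*(B'_s)\geq 1,$$
using $\Gamma_l(B'_s)$-admissibility of $\rho_{B'_s}$ and then $\Gamma'_k(B_0)$-admissibility of $\rho^*$.

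For the volume bound, I observe that by the doubling property the number of $B'\in\mathcal{U}_{i+k}$ with $\rho_{B'}(A)\neq 0$ (i.e.\ with $x_A\in L\cdot B'$) is bounded by a constant $C=C(\kappa,K_D,L)$. The power-sum inequality then gives $\rho(A)^p\leq C^{p-1}\sum_{B'}\rho^*(B')^p\rho_{B'}(A)^p$, so by Fubini,
$$\mathrm{Vol}_p(\rho)\leq C^{p-1}\sum_{B'}\rho^*(B')^p\,\mathrm{Vol}_p(\rho_{B'})\leq C^{p-1}M_{p,l}\cdot\mathrm{Vol}_p(\rho^*)\leq C^{p-1}M'_{p,k}M_{p,l},$$
which yields the claimed inequality with $K_{10}=C^{p-1}$.

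The main obstacle is Step 2: arranging the scale parameters so that the breakpoint construction simultaneously produces bona fide sub-paths in $\Gamma_l(B'_s)$, genuine edges in $G_{i+k}$ between consecutive hosts, and a host-sequence that truly lives in the shrunken family $\Gamma'_k(B_0)$. This is precisely why the proposition is stated with the buffered radii $L_1=1+1/b$ and $L_2=L-1/b$: they leave exactly one coarse-scale ball of slack on either end, absorbing the discrepancy between $x_{A_j}$ and its nearby host-center $x_{B'_s}$. Once these parameters are pinned down, admissibility and the volume bound follow cleanly.
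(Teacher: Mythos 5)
Your construction follows the same two-scale patching scheme as the paper (compose a coarse weight for $\Gamma'_k(B_0)$ with fine weights for $\Gamma_l(B')$), but the admissibility verification has a genuine hole. In the telescoping bound
\[
\sum_{j=1}^N\rho(A_j)\;\geq\;\sum_{s=0}^{S-1}\rho^*(B'_s)\sum_{A_j\in\gamma_s}\rho_{B'_s}(A_j)\;\geq\;\sum_{s=0}^{S-1}\rho^*(B'_s)\;\geq\;1,
\]
the last step requires the truncated host-sequence $\{B'_0,\ldots,B'_{S-1}\}$ to be admissible for $\rho^*$, i.e.\ to contain a path in $\Gamma'_k(B_0)$. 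But by your own stopping rule \emph{every} $B'_s$ with $s<S$ has center inside $L_2\cdot B_0$; only $B'_S$ escapes. The chain you actually produce in $\Gamma'_k(B_0)$ is $\{B'_0,\ldots,B'_S\}$, and the term $\rho^*(B'_S)$ never enters your telescoping sum because there is no sub-arc $\gamma_S$ to pair it with. One cannot discard it: optimality only gives $\rho^*\leq1$, so $\sum_{s<S}\rho^*(B'_s)\geq1-\rho^*(B'_S)$ is vacuous. A secondary issue is that the greedy breakpoint recursion may stall: if no $j>j_s$ has $x_{A_j}\notin L\cdot B'_s$, the process halts at some $B'_s$ whose center may still be inside $L_2\cdot B_0$; ruling this out requires roughly $b^{k-1}>L$, so the argument as written does not cover small $k$, while the lemma is stated for all $k,l$.

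The paper avoids both issues by not selecting hosts greedily. It sets $\rho(C)=\max_A\sigma^B_k(A)\,\rho^A_l(C)\,\chi^B_{k+l}(C)$ and, for admissibility, lower-bounds $\sigma^B_k(A)$ by a sub-sum of $\rho(C_j)$ for \emph{every} $A\in\mathcal{U}_{i+k}$ meeting $\gamma$, not just for a chosen chain. Since $\mathcal{U}_{i+k}(\gamma)$ always contains a path of $\Gamma'_k(B_0)$ (the coarse covering of the connected trace of $\gamma$ chains up automatically, and the endpoints land in $L_1\cdot B_0$ and outside $L_2\cdot B_0$ whenever $k\geq1$), summing over all such $A$ gives $1\leq\sum_A\sigma^B_k(A)\leq K\sum_j\rho(C_j)$ with $K$ a bounded overcounting constant. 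Your volume estimate, via the power-mean inequality and the observation that only boundedly many coarse balls contribute to each $\rho(A)$, is a legitimate alternative to the paper's max-versus-sum step and can be kept; the admissibility argument should be replaced by the non-greedy, full-covering version.
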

\begin{proof}
For each $i,k\geq 1$ and $B\in\mathcal{U}_{i}$, we denote $\rho^B_{k}:\mathcal{U}_{i+k}\to \R_+$ an optimal function, i.e. which verifies: $\rho^B_{k}$ is $\Gamma_{k}(B)$-admissible and
\begin{equation*}
\sum_{B'\in \mathcal{U}_{i+k}}\rho^B_{k}\left(B'\right)^p=\Mod_p\left(\Gamma_{k}(B),G_{i+k}\right).
\end{equation*}
Analogously, define $\sigma_{k}^B:\mathcal{U}_{i+k}\to\R_+$ an optimal function for $M'_{p,k}$. Optimality implies that $\rho^B_{k}\left(A\right)=\sigma^B_{k}\left(A\right)=0$ for any element $A$ of $\mathcal{U}_{i+k}$ which does not intersects $\overline{L\cdot B}$. For $B\in\mathcal{U}_i$, we set $\mathcal{U}_{i+k}\left(B\right)$ the elements $A\in\mathcal{U}_{i+k}$ such that $A\cap \overline{L\cdot B}\neq\emptyset$, and $\chi_{k}^B:\mathcal{U}_{i+k}\to\{0,1\}$ the characteristic function of $\mathcal{U}_{i+k}\left(B\right)$.

We fix now $i\geq 1$ and $B\in\mathcal{U}_i$. We must bound from above the $p$-combinatorial modulus of the path family $\Gamma_{k+l}(B)$ of $G_{i+k+l}$.
We define $\rho:\mathcal{U}_{i+k+l}\to\R_+$ by
\begin{equation}\label{rhofinal}
\rho\left(C\right)=\max\left\{\sigma^B_{k}\left(A\right)\cdot\rho^A_{l}\left(C\right):A\in\mathcal{U}_{i+k}\right\}\cdot\chi_{k+l}^B\left(C\right).
\end{equation}
Therefore, the $p$-volume is bounded from above by: 
\begin{align*}
\sum\limits_{C\in\mathcal{U}_{i+k+l}}\rho\left(C\right)^p=& \sum\limits_{C\in\mathcal{U}_{i+k+l}}\max\left\{\sigma^B_{k}\left(A\right)^p\cdot\rho^A_{l}\left(C\right)^p:A\in\mathcal{U}_{i+k}\right\}\chi^B_{k+l}\left(C\right)\\
\leq& \sum\limits_{C\in\mathcal{U}_{i+k+l}}\sum\limits_{A\in \mathcal{U}_{i+k}}\sigma^B_{k}\left(A\right)^p\cdot\rho^A_{l}\left(C\right)^p\chi_{k+l}^B\left(C\right)\chi_{l}^A\left(C\right)\\
=&\sum\limits_{A\in \mathcal{U}_{i+k}}\sigma^B_{k}\left(A\right)^p\cdot\left(\sum\limits_{C\in\mathcal{U}_{i+k+l}}\rho^A_{l}\left(C\right)^p\chi_{k+l}^B\left(C\right)\chi_{l}^A\left(C\right)\right)\\
 \leq& \sum\limits_{A\in \mathcal{U}_{i+k}}\sigma^B_{k}\left(A\right)^p\cdot\Mod_p\left(\Gamma_{l}(A),G_{(i+k)+l}\right)\\
\leq&\ \Mod_p\left(\Gamma'_{k}(B),G_{i+k}\right)\cdot\max\limits_{A\in \mathcal{U}_{i+k}}\Mod_p\left(\Gamma_{l}(A),G_{(i+k)+l}\right)\\
\leq&\ M'_{p,k}\cdot M_{p,l}.
\end{align*}
We look now for the admissibility condition. Let $\gamma=\left\{C_j\right\}_{j=1}^N\in\Gamma_{k+l}(B)$, we write $w_j$ for the center of $B_j$. For $A\in\mathcal{U}_{i+k}$ such that $A\cap \gamma\neq\emptyset$, the path $\gamma$ also belongs to $\Gamma_{l}(A)$, because $\diam\gamma\geq (L-1)b^{-i}$ and $\diam\left(L\cdot B\right)\leq 2L\cdot b^{-(i+k)}$. We fix $A\in\mathcal{U}_{i+k}$ such that $A\cap\gamma\neq\emptyset$, and let $j_1<j_2\in\left\{1,\ldots,N\right\}$ be such that $w_{j_1}\in A$ and $w_{j_2}\in X\setminus L\cdot A$. Using the admissibility of $\rho^A_{l}$, we obtain
\begin{equation}\label{admiss1}
\sum\limits_{j=j_1}^{j_2}\rho\left(C_{j}\right)\geq\sum\limits_{j=j_1}^{j_2}\sigma^B_{k}\left(A\right)\rho^A_{l}\left(C_{j}\right)\geq \sigma^B_{k}\left(A\right).
\end{equation}
Let $\mathcal{U}_{i+k}\left(\gamma\right)$ be the set of $A\in\mathcal{U}_{i+k}$ such that $A\cap\gamma\neq\emptyset$. Then there exists a path $\gamma'\in \Gamma'_{k}(B)$ which is contained in $\mathcal{U}_{i+k}\left(\gamma\right)$. This implies 
\begin{equation}\label{admiss2}
1\leq\sum\limits_{A\cap\gamma\neq\emptyset}\sigma^{B}_{k}\left(A\right)\leq\sum\limits_{A\cap\gamma\neq\emptyset}\sum\limits_{j_1}^{j_2}\rho\left(C_j\right)\leq K\cdot\sum\limits_{1}^N\rho\left(C_j\right),
\end{equation}
where $K$ is a constant that bounds from above the cardinal number of $\mathcal{U}_{i+k}\left(B\right)$, and which only depends on $\kappa$, $L$ and the doubling constant $K_D$. Therefore, if we multiply $\rho$ by $K$, we obtain a $\Gamma_{k+l}(B)$-admissible function with $p$-volume bounded from above by $K^p\cdot M'_{p,k}\cdot M_{p,l}$. This completes the proof of the proposition.
\end{proof}

An important consequence of this sub-multiplicative inequality is the following: 
\begin{lemma}
Let $\epsilon=\left(3K_{10}\right)^{-1}$, where $K_{10}$ is the constant of Lemma \ref{lme:smultiplicative}. Then $M'_{Q_N,k}\geq\epsilon$ for all  $k\geq 1$.
\end{lemma}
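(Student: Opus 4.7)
The plan is to argue by contradiction: assume $M'_{Q_N,k_0} < \epsilon = 1/(3K_{10})$ for some $k_0 \geq 1$, and exhibit $p < Q_N$ with $M_p = 0$, contradicting $Q_N = \inf\{p : M_p = 0\}$. Applying the sub-multiplicative inequality of Lemma \ref{lme:smultiplicative} with $k = k_0$ and $l = jk_0$ gives
\[
M_{Q_N,(j+1)k_0} \leq K_{10} \cdot M'_{Q_N,k_0} \cdot M_{Q_N,jk_0} < \tfrac{1}{3}\, M_{Q_N,jk_0},
\]
so by induction $M_{Q_N,nk_0} \leq (1/3)^{n-1} M_{Q_N,k_0}$. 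Thus $M_{Q_N,k}$ decays geometrically along the subsequence $k = nk_0$.

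The bridge from $p = Q_N$ to $p < Q_N$ is a H\"older interpolation combined with the observation that only polynomially many balls of the fine covering can meet a fixed ball. Fix $B \in \U_i$ and let $\rho$ be an optimal admissible function realising $\Mod_{Q_N}(\Gamma_k(B),G_{i+k})$; one may assume $\rho$ is supported on $\U_{i+k}(B) = \{B' \in \U_{i+k} : B' \cap \overline{L\cdot B} \neq \emptyset\}$. The doubling hypothesis on $X$ yields an estimate $\#\U_{i+k}(B) \leq C\, b^{kQ}$, where $Q$ is a doubling exponent and $C$ depends only on $L$, $\kappa$ and $K_D$. For any $p \in (0,Q_N)$, H\"older's inequality with conjugate exponents $Q_N/p$ and $Q_N/(Q_N-p)$ gives
\[
\sum_{B'} \rho(B')^p \leq \Bigl( \sum_{B'} \rho(B')^{Q_N} \Bigr)^{p/Q_N} \cdot (C\, b^{kQ})^{(Q_N-p)/Q_N}.
\]
Taking the supremum over $B \in \U$ then yields
\[
M_{p,k} \leq C^{(Q_N-p)/Q_N} \cdot M_{Q_N,k}^{p/Q_N} \cdot b^{kQ(Q_N-p)/Q_N}.
\]

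Specialising to $k = nk_0$ and inserting the geometric decay of $M_{Q_N,nk_0}$ produces a bound of the form $M_{p,nk_0} \leq C_1 \bigl[(1/3)^{p/Q_N} \cdot b^{k_0 Q(Q_N-p)/Q_N}\bigr]^n$. The bracketed quantity is $< 1$ precisely when $p \log 3 > k_0 Q(Q_N - p)\log b$, which defines a nonempty open interval of the form $(p_*, Q_N)$ with $p_* < Q_N$. For any $p$ in this interval, $M_p = \liminf_k M_{p,k} = 0$, contradicting $p < Q_N$.

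The main delicate point is the H\"older step: it relies crucially on the polynomial bound on the support of optimal weight functions supplied by the doubling property. Given that bound, the arithmetic of exponents automatically furnishes a range of $p < Q_N$ on which the combinatorial modulus vanishes, because the geometric decay from $K_{10} \cdot (3K_{10})^{-1} = 1/3 < 1$ beats the polynomial growth of the support cardinality for $p$ close enough to $Q_N$.
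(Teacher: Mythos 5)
Your proof is correct but takes a genuinely different route. The paper also argues by contradiction, but instead of H\"older interpolation it uses a truncation trick: starting from an optimal $\rho$ for $\Gamma'_k(B)$ at $p=Q_N$, it replaces $\rho$ by $\sigma(A)=\max\{\rho(A),(\epsilon K^{-1})^{1/p}\}$ (where $K$ is the doubling bound on $\#\{A\in\mathcal{U}_{i+k}:A\cap L\cdot B\neq\emptyset\}$). This $\sigma$ is still admissible, has $p$-volume $\leq 2\epsilon$, and is pointwise $\geq(\epsilon K^{-1})^{1/p}$, so writing $\sigma^q=\sigma^{q-p}\sigma^p$ gives $M'_{q,k}\leq(K/\epsilon)^{(p-q)/p}\cdot 2\epsilon$, which is $<3\epsilon=K_{10}^{-1}$ for $q<Q_N$ close enough. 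Combined with the interval inclusions established from the sub-multiplicative inequality ($M'_{q,k}<K_{10}^{-1}$ for some $k$ implies $M_q=0$), this yields the contradiction at a \emph{single fixed scale} $k$. Your version instead applies H\"older to $M_{p,k}$ directly, and must first extract the geometric decay $M_{Q_N,nk_0}\lesssim(1/3)^n$ to beat the $b^{kQ(Q_N-p)/Q_N}$ growth of the cardinality factor along the subsequence $k=nk_0$. Both approaches are interpolation arguments resting on the same doubling cardinality bound; the paper's truncation avoids the geometric-versus-polynomial comparison and works at a fixed scale, while yours makes the quantitative dependence $p_*<Q_N$ on $k_0$, the doubling exponent, and $\log b$ explicit. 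Both, as you give it, silently assume $Q_N>0$ (needed to form the conjugate exponents $Q_N/p$, $Q_N/(Q_N-p)$), but so does the paper's argument, and the case $Q_N=0$ corresponds to $X$ uniformly disconnected where the curve families are eventually empty.
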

\begin{proof}
Let $1\leq k\leq n$ be any integers. Then, from Lemma \ref{lme:smultiplicative}, we have 
$$M_{p,n}\leq \left(K_{10}\cdot M'_{p,k}\right)^{m}\cdot\max\limits_{0<r<k}M'_{p,r},$$
where $m=\left[n/k\right]$. In particular, for fixed $k$, the limit of $M_{p,n}$ when $n$ tends to infinity is bounded from above by the limit of $c_k\theta_k^{[n/k]}$, where $\theta_k=K_{10}\cdot M'_{p,k}$ and $c_k$ is the maximum of $M'_{p,r}$ with $0<r<k$. Therefore, $M_p=0$ if there exists $k\geq 1$ such that $M'_{p,k}< K_{10}^{-1}$. That is, we have the following interval inclusions:
$$I':=\left\{p:M'_p=0\right\}\subset J:=\left\{p:\exists\ k,\ M'_{p,k}<K_{10}^{-1}\right\}\subset I:=\left\{p:M_p=0\right\}.$$
Suppose there exists $k\geq 1$ such that $M'_{p,k}<\epsilon$. Let $K$ be a constant which bounds from above the number of elements $A\in \mathcal{U}_{i+k}$ such that $A\cap L\cdot B\neq\emptyset$ for all $i\geq 1$ and $B\in\mathcal{U}_i$. Since $M_{p,k}'<\epsilon$, we have $\Mod_p\left(\Gamma_{k}'(B),G_{i+k}\right)<\epsilon$ for all $i\geq 1$ and $B\in\mathcal{U}_i$. 

Let $i\geq 1$ and $B\in\mathcal{U}_i$, we consider $\rho:\mathcal{U}_{i+k}\to\R_+$ an optimal function for $\Gamma_{k}'(B)$. By optimality $\rho(A)=0$ for any $A$ in $\mathcal{U}_{i+k}$ which does not intersects $L\cdot B$. We define $\sigma:\mathcal{U}_{i+k}\to\R_+$ by setting
$$\sigma(A):=\max\left\{\rho(A),\left(\epsilon K^{-1}\right)^{1/p}\right\}.$$
Then $\sigma$ is a $\Gamma_{k}'(B)$-admissible function, bounded from below by $\left(\epsilon K^{-1}\right)^{1/p}$ and with $p$-volume bounded from above by 
$$\sum\limits_{A\in\in\mathcal{U}_{i+k}}\sigma(A)^p\leq \sum\limits_{A\cap L\cdot B\neq\emptyset}\rho(A)^p+\left(\epsilon K^{-1}\right)\cdot \#\left\{A\in \mathcal{U}_{i+k}:A\cap L\cdot B\neq\emptyset\right\}\leq 2\epsilon.$$
This implies that for $q\leq p$, we have
\begin{align*}
\Mod_q\left(\Gamma_{k}'(B),G_{i+k}\right)\leq &\sum\limits_{A\in\mathcal{U}_{i+k}}\sigma(A)^q 
\leq \max\left\{\sigma(A)^{q-p}\right\}\cdot \sum\limits_{A\in\mathcal{U}_{i+k}}\sigma(A)^p\\
\leq &\left(\frac{K}{\epsilon}\right)^{(p-q)/p}\cdot 2\epsilon=\left(\frac{K}{\epsilon}\right)^{1-\frac{q}{p}}\cdot 2\epsilon.
\end{align*}
In particular, $M_{q,k}'<3\epsilon=K_{10}^{-1}$ if $q<p$ is close enough to $p$. Suppose now by contradiction that $M'_{Q_N,k}<\epsilon$. Then there exists $q<Q_N$, close enough to $Q_N$, such that $M'_{q,k}<K_{10}^{-1}$. Therefore, $M_q=0$ which is a contradiction. This finish the proof. 
\end{proof}
A slight modification of the proof of Lemma \ref{indepdel}, shows that there exists an integer $l\geq 1$ and a constant $K_{11}$ such that $M'_{Q_N,k+l}\leq K_{11}\cdot M_{Q_N,k}$. This allows us to prove the following corollary.
\begin{corollary}[Positiveness of the modulus at the critical exponent]\label{modpos}
The sequence of moduli $\left\{M_{Q_N,k}(L)\right\}_{k}$ admits a positive lower bound, which depends only on $L$ and the doubling constant of $X$.
\end{corollary}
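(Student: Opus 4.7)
The plan is to combine two ingredients, both essentially given: the uniform lower bound $M'_{Q_N, k} \geq \epsilon := (3K_{10})^{-1}$ proved in the preceding lemma, and the scale-shift inequality $M'_{Q_N, k+l} \leq K_{11} \cdot M_{Q_N, k}$ asserted just before the corollary. Chaining them,
$$\epsilon \;\leq\; M'_{Q_N, k+l} \;\leq\; K_{11}\cdot M_{Q_N, k},$$
so $M_{Q_N, k} \geq \epsilon / K_{11}$ for every $k \geq 1$, which is exactly the claimed uniform positive lower bound.

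First I would make precise the scale-shift $M'_{Q_N, k+l} \leq K_{11} \cdot M_{Q_N, k}$ by adapting the proof of Lemma \ref{indepdel}. Fix $B \in \mathcal{U}_i$, and pick $l \geq 1$ large enough, depending only on $L$ and $b$, so that $(1+L)\, b^{-l} \leq L_2 - L_1$. This condition guarantees that for every element of the finite family
$$\mathcal{U}_{i+l}(B) \;:=\; \{A \in \mathcal{U}_{i+l} : A \cap L_1 \cdot B \neq \emptyset\},$$
the enlargement $L \cdot A$ is contained in $L_2 \cdot B$. Now any path $\gamma \in \Gamma'_{k+l}(B)$ starts in $L_1 \cdot B$ and exits $L_2 \cdot B$; picking $A \in \mathcal{U}_{i+l}(B)$ containing the starting endpoint, a sub-path of $\gamma$ belongs to $\Gamma_{k, L}(A)$. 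Taking the pointwise maximum over $A \in \mathcal{U}_{i+l}(B)$ of optimal $\Gamma_{k,L}(A)$-admissible weight functions produces a $\Gamma'_{k+l}(B)$-admissible function whose $p$-volume is at most $K_{11} \cdot M_{Q_N, k}$, where $K_{11}$ bounds the cardinality of $\mathcal{U}_{i+l}(B)$ uniformly in $i$, and hence depends only on $L$, $\kappa$, and the doubling constant $K_D$.

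The obstacle, such as it is, lies in the geometric verification that $L \cdot A \subset L_2 \cdot B$ for $A \in \mathcal{U}_{i+l}(B)$ once $l$ is taken large enough, which converts the outer annular condition defining $\Gamma'_{k+l}(B)$ into the one defining $\Gamma_{k,L}(A)$. With this in hand, the rest is a direct chaining of inequalities, and the resulting constant $\epsilon / K_{11}$ depends only on $L$ and the doubling constant of $X$, as required.
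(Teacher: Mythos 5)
Your proposal is correct and follows exactly the route the paper indicates (but does not spell out): combine the uniform lower bound $M'_{Q_N,k}\geq\epsilon=(3K_{10})^{-1}$ from the preceding lemma with the scale-shift $M'_{Q_N,k+l}\leq K_{11}\cdot M_{Q_N,k}$ obtained by modifying Lemma~\ref{indepdel}, and chain the two. Your geometric verification is the right one: choosing $l$ so that $(1+L)b^{-l}\leq L_2-L_1$ guarantees $L\cdot A\subset L_2\cdot B$ for every $A\in\mathcal{U}_{|B|+l}$ meeting $L_1\cdot B$, which makes any $\gamma\in\Gamma'_{k+l}(B)$ a member of $\Gamma_{k,L}(A)$ once $A$ is chosen to contain the center of the first vertex of $\gamma$; taking the pointwise maximum of optimal weight functions over the boundedly many such $A$ then gives an admissible function of controlled $p$-volume, exactly as in Lemma~\ref{indepdel}.
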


This lower bound ---therefore, the sub-multiplicative inequality--- with the following facts: (a) combinational modulus is bounded by the analytical moduli on tangent spaces of $X$, and (b) the critical exponent is equal the AR conformal dimension, give a more conceptual proof of the following theorem of Keith and Laakso (see \cite{KL}):

\begin{corollary}[Keith-Laakso]\label{kl}
Let $X$ be compact and $Q$-regular, $Q>1$, such that $\dim_{AR}X=Q$. Then there exists a weak tangent space $X_\infty$ of $X$, which admits a curve family $\Gamma\subset X_\infty$ of definite diameter and of positive $Q$-analytical modulus.
\end{corollary}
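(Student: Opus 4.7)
\emph{Proof proposal.} The plan is to chain together the three main tools developed earlier in the paper: the identification of the Ahlfors regular conformal dimension with the critical exponent (Theorem \ref{maintheo}), the positivity of the combinatorial modulus at the critical exponent (Corollary \ref{modpos}), and the semicontinuity-type bound of the combinatorial modulus by the analytic moduli on weak tangent spaces (Proposition \ref{semicont}). Each ingredient contributes one line of the argument, and the corollary then follows by a short chain of inequalities.

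First, since $X$ is $Q$-regular and $\dim_{AR}X = Q$ by hypothesis, Theorem \ref{maintheo} gives $Q_N = Q$. Second, applying Corollary \ref{modpos} with, say, $L = 2$, we obtain a constant $c > 0$ such that $M_{Q,k}(2) \geq c$ for every $k \geq 1$; taking the liminf, $M_Q \geq c > 0$. Third, Proposition \ref{semicont} applies because $X$ is $Q$-regular and $Q > 1$, and yields $M_Q \leq K_9 \cdot M_Q^T$. Combining the last two inequalities, we conclude
\[
M_Q^T \;\geq\; \frac{c}{K_9} \;>\; 0.
\]

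By the definition of $M_Q^T$ as a supremum over weak tangent spaces of $\Mod_Q(\Gamma(x_\infty))$, this strict positivity forces the existence of a weak tangent space $(X_\infty, d_\infty, x_\infty)$ of $X$ for which the curve family $\Gamma(x_\infty)$, consisting of all curves joining $B(x_\infty, 1)$ to $X_\infty \setminus B(x_\infty, 2)$, satisfies $\Mod_Q(\Gamma(x_\infty)) > 0$. Every such curve has diameter at least $1$, so $\Gamma(x_\infty)$ is a curve family of definite diameter with positive $Q$-analytic modulus, as required.

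The main obstacle is conceptual rather than computational: all the heavy lifting has already been done in the construction of the hyperbolic graph $Z_d$ and the proof that $Q_N=\dim_{AR}X$, together with the sub-multiplicative inequality of Lemma \ref{lme:smultiplicative} that underlies Corollary \ref{modpos}. A subtle point to verify is that the supremum defining $M_Q^T$ being positive really does produce an actual tangent space with positive modulus (which it does, since otherwise the supremum would be zero); if one prefers an explicit construction, one may instead pick balls $B_k \in \mathcal{U}_{i_k}$ almost realizing $M_{Q,k}(2)$ and extract a Gromov--Hausdorff limit of $(X, r_{i_k}^{-1}d, x_{B_k})$, and then re-run the semicontinuity argument from the proof of Proposition \ref{semicont} on this particular subsequence. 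Either way, the role of the corollary is to emphasize that the equality $Q_N=\dim_{AR}X$ makes the Keith--Laakso conclusion an immediate formal consequence of combinatorial modulus estimates, bypassing the weak-tangent differentiation arguments of the original proof.
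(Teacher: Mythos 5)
Your proof is correct and follows exactly the same route as the paper's own: combine $Q_N=\dim_{AR}X=Q$ (Theorem \ref{maintheo}), the positivity $M_{Q_N}>0$ from Corollary \ref{modpos}, and the bound $M_{Q_N}\leq K_9\,M_{Q_N}^T$ from Proposition \ref{semicont} to deduce $M_Q^T>0$, hence the existence of a weak tangent space with a positive-modulus family of curves of definite diameter. The paper's proof is more terse, but the logical content and the three ingredients are identical; your added observation that a positive supremum over tangent spaces yields an actual tangent space with positive modulus is a reasonable clarification rather than a genuine gap.
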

\begin{proof}
Indeed, from Proposition \ref{semicont}, we know that $M_{Q_N}\leq K_9\cdot M_{Q_N}^T$. Since from Corollary \ref{modpos}, we have that $M_{Q_N}>0$, we conclude that there exists a weak tangent space $(X_\infty,x_\infty)$ of $X$ such that the family of curves joining $B(x_\infty,1)$ and $X_\infty\setminus B(x_\infty,2)$ is of positive $Q_N$-modulus.
\end{proof}

\subsection{Some variants\label{secdifermods}}
When the tangent spaces of $X$ are not locally homeomorphic to $X$, the nerves $G_k$, associated to the coverings of $X$, differ from $X$ by approaching its tangent spaces when $k$ becomes large, i.e. small scales. For example, it is usually possible to find curves in $G_k$ that do not exist in $X$.

In this Section, we introduce a second combinatorial modulus $M_{p,k}^X$ defined using curves of $X$. We give topological and metric conditions on $X$ so these two moduli, $M_{p,k}^X$ and $M_{p,k}$, have the same asymptotic behavior when $k$ tends to infinity. This new modulus will allow us to compute the AR conformal dimension of $X$ using ``genuine'' curves of $X$.

Let $\Gamma$ be a curve family in $X$, and let $\mathcal{U}$ be a covering of $X$. For $\gamma\in \Gamma$, we denote by $\mathcal{U}\left(\gamma\right)=\left\{B\in \mathcal{U}:B\cap\gamma\neq\emptyset\right\}$. For each $B\in\mathcal{U}$, we denote $\Gamma(B)$ the family of curves in $X$ which intersect both $\overline{\ell\cdot B}$ and $X\setminus L\cdot B$, where $L\geq 2$ and $\ell:=1+b^{-1}$. Therefore, for all $k\geq 1$ and $B\in\mathcal{U}$, we define the following family of subsets of $G_{|B|+k}$: 
$$\Delta_{k}(B)=\left\{\mathcal{U}_{|B|+k}\left(\gamma\right):\gamma\in \Gamma(B)\right\}.$$
Finally, we set $\Mod_p\left(\Gamma(B),\mathcal{U}_{|B|+k}\right):=\Mod_p\left(\Delta_{k}(B),G_{|B|+k}\right)$. 
\begin{definition}[Combinatorial modulus of curves in the space]\label{modcombx}
We define 
\begin{equation}
M^X_{p,k}\left(L\right)=\sup\limits_{B\in\mathcal{U}}\Mod_p\left(\Gamma(B),\mathcal{U}_{|B|+k}\right).
\end{equation}
The symbol $X$, in the notation, indicates that the modulus is computed using curves of $X$.
\end{definition}

To simplify the notation, we write $M^X_{p,k}$ instead of $M^X_{p,k}\left(L\right)$. The sequence $\{M^X_{p,k}\}$ have the same properties as $\{M_{p,k}\}$: for fixed $k$, the function $p\mapsto M^X_{p,k}$ is non-increasing, and the set of $p\in(0,\infty)$ such that $M^X_p:=\liminf_k M^X_{p,k}=0$ is an interval. We define in the same way the critical exponent $Q_X$.

The first remark, is that the sequence $\{M^X_{p,k}\}_k$ verifies a ``stronger'' sub-multiplicative inequality on $k$: there exists a constant $K\geq 1$, which depends only on $p$, $\kappa$, $L$ and the doubling constant $K_D$, such that
\begin{equation}\label{vsmultiplicative}
M^X_{p,k+l}\leq K\cdot M^X_{p,k}\cdot M^X_{p,l}.
\end{equation}
for all $k$ and $l$. The proof is analogous to that of (\ref{smultiplicative}), noting that here the family of curves $\Gamma(B)$ does not change when scale does; therefore, it is not necessary to consider the modulus $M'_{p,k}$ like before. If we set $\epsilon_L=K^{-1}$, we have (compare with \cite{BK} Section 3)
\begin{equation}\label{modptit}
\lim\limits_{k\to+\infty}M^X_{p,k}=0\Leftrightarrow  \exists\ k\geq 1\text{ tel que }M^X_{p,k}<\epsilon_L,
\end{equation}
and therefore, $M^X_{p,k}\geq \epsilon_L$ for all $k\geq 1$ if $p\in\left(0,Q_X\right]$. We remark that $Q_X\leq Q_N$ always holds, because for any curve $\gamma\in \Gamma(B)$, the subset $\mathcal{U}_{|B|+k}\left(\gamma\right)$ contains a path which belongs to $\Gamma'_{k}(B)$; and therefore, $M^X_{p,k}\leq M'_{p,k}$. In general, it is a strict inequality (see the remark following Theorem \ref{egalite}). 

Recall the following definition. Suppose $X$ is connected, the for $x,y\in X$, define
\begin{equation*}
\delta(x,y):=\inf\{\diam\ J: J\textrm{ is connected with }x,y\in J\}.
\end{equation*}
For $r>0$ let
\begin{equation*}
h(r):=\sup\{\delta(x,y): d(x,y)\leq r\}.
\end{equation*}
We say that $X$ is \emph{locally connected} if $h(r)\to 0$ when $r\to 0$. The function $h$ is called the \emph{modulus of local connectivity}. We say that $X$ is \emph{linearly connected} ---LC for short--- if there exists a constant $K_\ell\geq 1$ such that $h(r)\leq K_\ell r$ for all $0<r\leq \diam X$. Up to changing the constant $K_\ell$, this is equivalent to the following: for any $x,y\in X$, there exists a curve $\gamma$ in $X$ joining them with $\diam\gamma\leq K_\ell d(x,y)$. We can also give the following interpretation: the distance $\delta$ is bi-Lipschitz equivalent to $d$: $d\leq \delta\leq K_\ell d$. For the distance $\delta$, every ball is path-connected.

We also recall that $V_r(A)$ denotes the $r$-neighborhood of $A$. The goal of this section is to prove the following result:

\begin{theorem}\label{egalite}
Let $X$ be a doubling, uniformly perfect, compact metric space. Suppose that $X$ also verifies the following two hypotheses: 
\begin{enumerate}
\item (Uniform linear connectivity of components) There exists a constant $K_\ell\geq 1$ such that any connected component of $X$ is $K_\ell$-linearly connected. 
\item (Uniform separation of components) There exists a constant $K_s\geq 1$ such that: for all $0<r\leq\diam X$, there exists a covering $\mathcal{W}_r$ of $X$, by open and closed sets, such that for all $W\in \mathcal{W}_r$, we have $\mathrm{dist}\left(W,X\setminus W\right)\geq r/K_s$ and there exists a connected component $Y$ of $X$ with $Y\subset W\subset V_r(Y)$.
\end{enumerate}
Then $Q_X=Q_N$. In particular, when $X$ linearly connected the critical exponent $Q_X$ is equal to the AR conformal dimension of $X$.
\end{theorem}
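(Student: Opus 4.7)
The plan is to prove the two inequalities $Q_X \leq Q_N$ and $Q_N \leq Q_X$ separately. The easy direction $Q_X \leq Q_N$ holds unconditionally: for any curve $\gamma \in \Gamma(B)$, the set of level-$(|B|+k)$ balls meeting $\gamma$ is connected in $G_{|B|+k}$ and contains a combinatorial sub-path in $\Gamma'_k(B')$ for a ball $B' \in \mathcal{U}$ comparable to $B$ (after a bounded scale shift, as in Lemma \ref{indepdel}). Hence any $\Gamma'_k(B')$-admissible weight, enlarged to its combinatorial neighbours by a factor independent of $k$, dominates a $\Gamma(B)$-admissible one, so $M^X_{p,k} \lesssim M'_{p,k}$ and $Q_X \leq Q_N$.

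For the reverse inequality $Q_N \leq Q_X$, I will produce, for every $p > 0$, an integer $l$ and a constant $C$, depending only on $K_s$, $K_\ell$, $\kappa$, $\lambda$ and the doubling constant, such that
\[
M_{p,\, k+l} \leq C \cdot M^X_{p,\, k} \qquad \text{for all } k \geq 1.
\]
Taking the liminf in $k$ will then give $M_p \leq C \cdot M^X_p$, forcing $Q_N \leq Q_X$. Fix $B \in \mathcal{U}_i$ and a nerve-path $\gamma = (B_1, \ldots, B_N) \in \Gamma_{k+l}(B) \subset G_{i+k+l}$. The core of the argument is to associate to $\gamma$ a genuine curve $\tilde\gamma \in \Gamma(B)$ whose level-$(i+k)$ combinatorial trace sits within a uniformly bounded combinatorial neighbourhood of the level-$(i+k)$ ancestors of the $B_j$.

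To build $\tilde\gamma$, I would choose $l$ so large that $b^l > 2\lambda\kappa K_s$, then apply hypothesis (2) at scale $r := b^{-(i+k)}$ and refine the resulting clopen covering to a partition $\mathcal{W}_r$ whose pieces are pairwise $r/K_s$-separated, each contained in the $r$-neighbourhood of a connected component of $X$. Since any two nerve-adjacent centers at level $i+k+l$ lie within $2\lambda\kappa b^{-(i+k+l)} < r/K_s$ of each other, the choice of $l$ guarantees that all centers $x_{B_j}$ lie in a single piece $W \in \mathcal{W}_r$, and hence in $V_r(Y)$ for a single component $Y$ of $X$. Hypothesis (1) then supplies sub-curves $\sigma_j \subset Y$ of diameter $\lesssim K_\ell b^{-(i+k)}$ connecting a point $y_j \in Y$ within $r$ of $x_{B_j}$ to a point $y_{j+1} \in Y$ within $r$ of $x_{B_{j+1}}$; concatenating, with short links at the endpoints to land in $\overline{\ell \cdot B}$ and in $X \setminus L \cdot B$, yields $\tilde\gamma \in \Gamma(B)$.

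The hard part will be the quantitative comparison of combinatorial traces. Using the diameter bound on each $\sigma_j$ together with doubling, I expect to find a constant $n_0 = n_0(K_\ell, K_s, \kappa, \lambda, l)$ such that every level-$(i+k)$ ball meeting $\tilde\gamma$ lies within combinatorial distance at most $n_0$ in $G_{i+k}$ of the ancestor of some $B_j$. Given an optimal $\Gamma(B)$-admissible weight $\rho: \mathcal{U}_{i+k} \to \R_+$ realising $M^X_{p,k}$, define
\[
\hat\rho(B') := \max\bigl\{\rho(A) : A \in \mathcal{U}_{i+k},\ A \text{ is within combinatorial distance } n_0 \text{ of the ancestor of } B' \text{ in } \mathcal{U}_{i+k}\bigr\},
\]
and use admissibility of $\rho$ along $\tilde\gamma$ to check that a uniformly bounded multiple of $\hat\rho$ is $\Gamma_{k+l}(B)$-admissible, with $p$-volume bounded by a constant times $\mathrm{Vol}_p(\rho)$ (via the doubling-controlled overlap of the $n_0$-neighbourhoods). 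This will yield the displayed inequality, hence $Q_N \leq Q_X$. The final sentence of the theorem is then immediate: if $X$ is connected and linearly connected, both hypotheses hold trivially (take $\mathcal{W}_r = \{X\}$), so $\dim_{AR} X = Q_N = Q_X$ by Theorem \ref{maintheo}.
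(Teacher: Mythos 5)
Your plan follows essentially the same route as the paper's proof: convert nerve paths into genuine curves by using hypothesis (2) to locate a single connected component $Y$ near all the path vertices, then hypothesis (1) to thread a curve in $Y$ through nearby points, and finally pull an admissible weight for $\Gamma(B)$ back to an admissible nerve weight with controlled $p$-volume. The only structural difference is your scale shift $l$ — you aim at $M_{p,k+l}\lesssim M^X_{p,k}$ whereas the paper works at a single level $|B|+k$ and proves $M_{p,k}\lesssim M^X_{p,k}$ for $k\geq k_0$; both give $Q_N\leq Q_X$ after taking the liminf. Two details to tighten in your version: you cannot in general "refine the clopen covering from hypothesis (2) to a pairwise $r/K_s$-separated partition" — the sets of $\mathcal{W}_r$ may overlap, as the paper itself notes in the remark after the theorem statement; instead use the $\epsilon$-equivalence classes directly, which are automatically a disjoint clopen partition and, by the equivalent formulation of (2), lie within $K_s\epsilon$ of some component. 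Also, the constant $C$ in your target inequality will in fact depend on $p$ (it appears as $M^{p+1}$ in the paper), not only on $K_s,K_\ell,\kappa,\lambda$ and doubling — this is harmless for the argument, but the statement as written is too strong.
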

We make some remarks before proving the theorem.
\begin{remark}[Remark 1.]
In general, $Q_X<Q_N$. It's not hard to construct a Cantor set $X$ in the plane $\R^2$ such that $Q_X=0$ and $Q_N=2$. See also Figure \ref{figsepcomp}.
\end{remark}
\begin{remark}[Remark 2.]
The hypothesis of the item (2) is inspired in the analogous notion of uniform disconnectness of David and Semmes \cite{DS}. By compactness, we can always suppose that the covering $\mathcal{W}_r$ is finite.

We can state this condition in the following way. Given $\epsilon>0$, we can define an equivalence relation $\sim_\epsilon$ in $X$, for which two points $x$ and $y$ of $X$ are $\epsilon$-equivalent if they can be connected by an $\epsilon$-chain, i.e. there exists a sequence $\left\{z_i\right\}_{i=1}^N\subset X$ with $z_1=x$, $z_N=y$ and $d(z_i,z_{i+1})\leq \epsilon$ for all $i=1,\ldots,N-1$. 

Each $\epsilon$-class $W$, is open and closed with $\mathrm{dist}\left(W,X\setminus W\right)> \epsilon$. Moreover, if $\epsilon_1\leq \epsilon_2$, and if we denote $W_{\epsilon_i}(x)$, $i=1,2$, the $\epsilon_i$-class which contains $x$, then $W_{\epsilon_1}(x)\subset W_{\epsilon_2}(x)$. Also $\bigcap_{\epsilon>0}W_\epsilon(x)=Y$, where $Y$ is the connected component of $X$ containing $x$. In particular, for all $0<r\leq\diam X$, there exists $\epsilon_r$ such that if $\epsilon\leq \epsilon_r$, then $W_\epsilon(x)\subset V_r(Y)$. 

Condition 2 above is equivalent to the following: for all $\epsilon\in (0,\diam X)$ and all $\epsilon$-class $W$, 
there exists a connected component $Y$ of $X$ such that $W\subset V_{K_s\epsilon}(Y)$. 

In fact, suppose that $X$ verifies condition (2), then we take $r= K_s \epsilon$. If $\mathcal{W}_r$ is a finite covering of $X$ associated to $r$, like in the statement of condition 2, then each element $W$ of $\mathcal{W}_r$ is a union of $\epsilon$-classes, and each one of these classes is in the $r$-neighborhood of the connected component $Y$ of $X$ corresponding to $W$. 

Conversely, since the $\epsilon$-classes form an open covering of $X$ and are pairwise disjoint, for each $\epsilon>0$, there are only finitely many such classes. We denote them by $W_i(\epsilon)$ for $i=1,\ldots, N_\epsilon$. If a component $Y$ of $X$ intersects an $\epsilon$-class $W_i(\epsilon)$, it must be contained in that class. Consider $Y_i$ a connected component of $X$ such that $W_i(\epsilon)$ is in the $K_s\epsilon$-neighborhood of $Y_i$. For each $Y_i$, we consider the open and closed set $U_i$ consisting of the $\epsilon$-classes contained in the $K_s\epsilon$-neighborhood of $Y_i$. Thus, we obtain a covering of $X$, by open and closed subsets $\{U_i\}$, at distance at least $\epsilon$ of their complements, and such that $Y_i\subset U_i\subset V_ {K_s \epsilon}(Y_i)$ for each $i$. We remark that the $U_i$ are not necessarily disjoint.

We end with another formulation of condition 2. For each $\epsilon>0$, each $i\in\{1,\ldots,N_\epsilon\}$ and each component $Y$ of $X$, we set
$$d_Y(\epsilon,i):=\inf\left\{r>0:W_i(\epsilon)\subset V_r(Y)\right\}.$$
For each class $W_i(\epsilon)$, denote
$$r_i(\epsilon):=\inf\left\{d_Y(\epsilon,i):Y\text{ connected component of } X\right\},$$
and finally, define $h:(0,\diam X]\to \R_+$ by setting
\begin{equation}\label{defh}h(\epsilon)=\max\left\{r_i(\epsilon):i=1,\ldots,N_\epsilon\right\}.\end{equation}
The hypothesis says that there exists a uniform constant $K_s$ such that $h(\epsilon)\leq K_s\cdot\epsilon$ for all $0<\epsilon\leq \diam X$ (see also Figure \ref{figsepcomp}). For example, the Cantor set of segments $X:=\mathcal{C}\times[0,1]$, where $\mathcal{C}$ is the standard middle Cantor set, verifies the hypothesis of Theorem \ref{egalite}.

\begin{figure}
\centering
\setlength{\unitlength}{1cm}
\begin{picture}(5,5)
\includegraphics{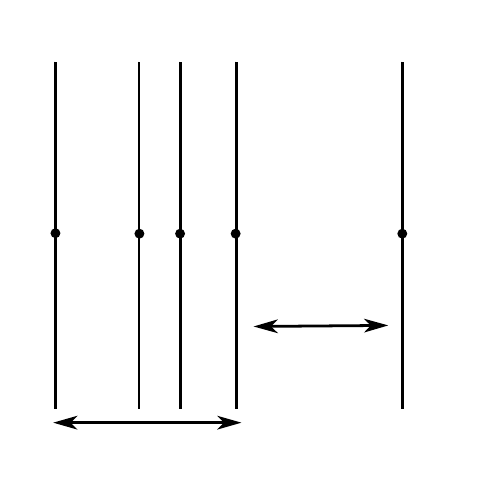}
\put(-2.5,1.3){\tiny $\epsilon=\frac{1}{(n-1)n}$}
\put(-3.8,0.3){\tiny $r=\frac{1}{n}$}
\put(-4.7,2.5){\tiny $0$}
\end{picture}
\caption{\label{figsepcomp} Let $X=\overline{\{1/n:n\geq 1\}}\times [0,1]$. The condition of uniform separation of connected components is not verified. Indeed, if $\epsilon=1/((n-1)n)$, then with $\epsilon$-chains we can connect $0$ to all the $r$-neighborhood of $\{0\}\times [0,1]$, where $r=1/n$. But $r/\epsilon\to\infty$ when $n\to\infty$. This behavior is forbidden by condition 2 of the theorem. For this simple example $Q_X<Q_N$.}
\end{figure}
\end{remark}

\begin{proof}[Proof of Theorem \ref{egalite}]
We must show the inequality $Q_N\leq Q_X$. For $p> 0$, we show that there exist constants $M$ and $k_0$, which depend only on $\lambda$ and the geometry of $X$, such that $M_{p,k}\leq M^{p+1}\cdot M^X_{p,k}$ for all $k\geq k_0$.

First, remark that condition (2) implies that paths of $G_{m}$ are at distance comparable to $b^{-m}$ of genuine curves of $X$. If $B_1\sim B_2$ are two elements of $\mathcal{U}_{m}$, then their centers, which we denote by $z$ and $w$ respectively, verify $d(z,w)< 2\lambda\kappa b^{-m}=\epsilon_m$ where $\epsilon_m:=2\lambda\kappa b^{-m}$. Therefore, if $\gamma=\left\{B_j\right\}_{j=1}^N$ is a path in $G_{m}$, the centers of $B_j$, $z_j$, $j=1,\ldots N$, belong to the same $\epsilon_m$-class $W$ of $X$. Since $h(\epsilon_m)\leq K_s\cdot \epsilon_m$, there exists a connected component $Y$ of $X$ such that $W\subset V_{K_s\epsilon_m}(Y)$. This implies that $\gamma$ is contained in $W\subset V_{\epsilon_m}(Y)$.

So there exists $y_j\in Y$ such that $d\left(y_j,z_j\right)<K_s\cdot \epsilon_m$ for all $j\in\left\{1,\ldots,N\right\}$. In particular, we have $d\left(y_j,y_{j+1}\right)<3K_s\epsilon_m$, and since $Y$ is $K_\ell$-linearly connected, there exists a curve $\gamma_j$ contained in $Y$, joining $y_j$ to $y_{j+1}$, and with diameter bounded from above by $3K_\ell K_s\epsilon_m$. Set $K:=3K_\ell K_s$. Let $\zeta_\gamma=\gamma_1* \cdots*\gamma_{N-1}$ be the concatenation of the curves $\gamma_j$. We write $\zeta_\gamma(1)=y_1$ and $\zeta_\gamma(2)=y_N$.

Let $k\geq 1$, $B\in\mathcal{U}$, and let $\hat{\rho}_B:\mathcal{U}_{|B|+k}\to \R_+$ be an $\Gamma(B)$-admissible function such that
\begin{equation}\label{sumachica2}
\sum\limits_{A\in \mathcal{U}_{|B|+k}}\hat{\rho}_B\left(A\right)^p=\Mod_p\left(\Gamma(B),\mathcal{U}_{|B|+k}\right).
\end{equation}
Take a path $\gamma=\left\{B_j\right\}_{j=1}^N$ of $G_{|B|+k}$ which verifies: $z_1$ belongs to $B$, $z_j$ belongs to $\left(L+1\right)\cdot B$ for $j=2,\dots,N-1$ and $z_N$ does not belong to $\left(L+1\right)\cdot B$. Let $\zeta_\gamma=\gamma_1* \cdots*\gamma_{N-1}$ be the curve constructed before. Write for simplicity $i=|B|$. Since $d(z_1,\zeta_\gamma(1))\leq K\cdot\epsilon_{i+k}$, we have 
$$d(\zeta_\gamma(1),x)\leq d(z_1,x)+K\cdot\epsilon_{i+k}\leq\kappa b^{-i}\left(1+\frac{2\lambda K}{b^k}\right).$$
Therefore, $\zeta_\gamma\cap\ \ell\cdot B\neq\emptyset$ if $k\geq k_0$, where $k_0$ is the smallest integer such that $k_0\geq \log_{b}(2\lambda K)+1$. Also, since 
\begin{equation*}
d\left(\zeta_\gamma(2),x\right)\geq d\left(z_N,x\right)-d\left(\zeta_\gamma(2),z_N\right)\geq \kappa \left(L+1-\frac{2\lambda K}{b^k}\right)b^{-i}> L\kappa b^{-i},
\end{equation*}
we have $\zeta_\gamma\cap X\setminus L\cdot B\neq\emptyset$, and so $\zeta_\gamma\in \Gamma(B)$. For each point $w$ of $\gamma_j$, we have 
\begin{equation*}
d\left(z_j,w\right)\leq \diam\gamma_j+K\epsilon_{i+k}\leq 2K\epsilon_{i+k}\leq\Lambda \kappa b^{-(i+k)},
\end{equation*} 
where $\Lambda\geq4\lambda K$ is a uniform constant which only depends on $\lambda$, $\kappa$, $K_s$ and $K_\ell$. We can suppose $\Lambda$ large enough so that any element $A$ of $\mathcal{U}_{i+k}$, which intersects $\gamma_j$, is contained in $\Lambda \cdot B_{j}$. The same holds for $j+1$.
Define $\rho_B:\mathcal{U}_{i+k}\to \R_+$ by
\begin{equation}
\rho_B\left(A\right)=\max\left\{\hat{\rho}_B\left(C\right):C\subset \Lambda \cdot A\right\}.
\end{equation}
Since the number of elements $C$ of $\mathcal{U}_{i+k}$ which are contained in $\Lambda\cdot A$, is bounded from above by a constant $M$, which depends only on $\Lambda$ and the doubling constant $K_D$, we have
\begin{equation}
\rho_B\left(B_{j}\right)\geq \frac{1}{M}\sum\limits_{C\cap\gamma_j\neq\emptyset}\hat{\rho}_B\left(C\right).
\end{equation}
Therefore,
\begin{equation}
\sum\limits_{j=1}^N\rho_B\left(B_{j}\right)\geq \frac{1}{M}\sum\limits_{C\cap \zeta\neq\emptyset}\hat{\rho}_B\left(C\right)\geq \frac{1}{M}.
\end{equation}
On the other hand, take $M$ large enough so that the number of elements $A$ in $\mathcal{U}_{i+k}$ such that $\Lambda\cdot  A$ contains $C$, is also bounded from above by $M$ for each $C$ in $\mathcal{U}_{i+k}$. $M$ still depends only on $\Lambda$ and the doubling constant. Then the $p$-volume is bounded by
\begin{align*}
\sum\limits_{A\in \mathcal{U}_{i+k}}\rho_B\left(A\right)^p&=\sum\limits_{A\in \mathcal{U}_{i+k}}\max\left\{\hat{\rho}_B\left(C\right)^p:C\subset\Lambda\cdot A\right\}\\
&\leq \sum\limits_{A\in \mathcal{U}_{i+k}}\sum\limits_{C\subset\Lambda\cdot A}\hat{\rho}_B\left(C\right)^p
\leq M\cdot\sum\limits_{C\in \mathcal{U}_{i+k}}\hat{\rho}_B(C)^p\\
&= M\cdot\Mod_p\left(\Gamma(B),\mathcal{U}_{|B|+k}\right).
\end{align*}
That is to say, if we multiply $\rho_B$ by $M$, we obtain a $\Gamma_{k}(B)$-admissible function which has $p$-volume bounded from above by $K\cdot \Mod_p\left(\Gamma(B),\mathcal{U}_{|B|+k}\right)$, where $K:=M^{p+1}$. Therefore, $\Mod_p\left(\Gamma_{k}(B),G_{|B|+k}\right)\leq K\cdot\Mod_p\left(\Gamma(B),\mathcal{U}_{|B|+k}\right)$. This completes the proof of the theorem.
\end{proof}

\subsection{The approximately self-similar case}
We finish by applying this result to the case when the space is approximately self-similar. In this case, we can simplify the definition of $Q_X$ using a family of curves of definite diameter. The following definition appears in \cite{BK}: we say that $X$ is approximately self-similar if there exist constants $c_0>0$ and $L_0\geq 1$ such that for any $0<r\leq\diam X$ and any $x\in X$, there exists an open set $U\subset X$, with $\diam U\geq c_0$, and a $L_0$-bi-Lipschitz map $\phi:\left(B(x,r),\frac{d}{r}\right)\to (U,d)$. This definition implies that $X$ is doubling and uniformly perfect, and that if $X$ is connected and locally connected, then $X$ is LC (see \cite{CaPi} Chapter 2).

Two important classes of approximately self-similar spaces are the boundaries of hyperbolic groups and the Julia sets of hyperbolic rational maps. Other examples indlude the  Sierpi\'nski carpet and gasket, the Menger curve and other classical fractals, which appear as attractors of some Iterated Function Systems.

The following definition appears in \cite{BK} and \cite{HP2}. From now on we suppose $X$ approximately self-similar. For $\delta>0$, denote 
$$\Gamma_\delta=\left\{\gamma\subset X:\diam\gamma\geq \delta\right\},$$
and let $N_{p,k}\left(\delta\right):=\Mod_p\left(\Gamma_\delta,\mathcal{U}_k\right)$. In \cite{BK} Section 3, several important properties of $N_{p,k}(\delta)$ for approximately self-similar sets are proved. In fact, the sequence $\left\{N_{p,k}\right\}_k$ verifies a sub-multiplicative inequality, and there exists $\epsilon_\delta>0$, which depends only on $\delta$ and the doubling constant of $X$, such that
\begin{equation}\label{modptit2}
\lim\limits_{k\to+\infty}N_{p,k}\left(\delta\right)=0\Leftrightarrow  \exists\ k\geq 1\text{ such that }N_{p,k}\left(\delta\right)<\epsilon_\delta.
\end{equation}
Therefore, we can define the \emph{large scale critical exponent} of $X$ by
\begin{equation}\label{expcritique}
Q_D\left(\delta\right)=\inf\left\{p>0:N_{p,k}\left(\delta\right)\to 0,\text{ quand }k\to+\infty\right\}.
\end{equation}
From \cite{HP2} Corollary 3.3, we have $Q_D\left(\delta\right)\leq \dim_{AR}X$ for all $\delta>0$. 
 
\begin{proposition}
Let $X$ be approximately self-similar. There exists $\delta_0>0$, which depends only on the constant $L_0$, such that if $0<\delta\leq\delta_0$, then $Q_X\leq Q_D\left(\delta\right)$.
\end{proposition}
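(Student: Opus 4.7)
The plan is to use approximate self-similarity to transport, for every $B\in\mathcal{U}$ at every scale, the annulus-crossing modulus problem defining $M^X_{p,k}(L)$ into the large-scale problem defining $N_{p,k'}(\delta)$, losing only a bounded additive shift in the scale and a bounded multiplicative factor, with constants depending only on $L_0$ and already-fixed data.

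For the geometric setup, fix $B\in\mathcal{U}_i$ and take $r:=2L\kappa b^{-i}$, so that $L\cdot B\subset B(x_B,r)$. Approximate self-similarity applied at $x_B$ and scale $r$ yields an $L_0$-bi-Lipschitz map $\phi:(B(x_B,r),d/r)\to(U,d)$ with $\diam U\geq c_0$. I would set
$$\delta_0:=\frac{L-\ell}{2L\cdot L_0},$$
which depends only on $L_0$, since $L$, $\ell=1+b^{-1}$ and $b$ are fixed parameters of the construction. Given any $\gamma\in\Gamma(B)$, extract the subcurve $\gamma'\subset L\cdot B$ from the first intersection of $\gamma$ with $\overline{\ell\cdot B}$ to the first subsequent exit from $L\cdot B$; then $\gamma'\subset B(x_B,r)$ has diameter at least $(L-\ell)\kappa b^{-i}$, and the bi-Lipschitz bound forces $\diam\phi(\gamma')\geq\delta_0$. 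Hence $\phi(\gamma')\in\Gamma_\delta$ for every $\delta\leq\delta_0$.

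The heart of the proof is the transport of admissible weights. Fix $\delta\leq\delta_0$ and $p>Q_D(\delta)$, and let $k':=k-\lfloor\log_b(L/L_0)\rfloor$ so that $\diam\phi(A)\asymp b^{-k'}$ for every $A\in\mathcal{U}_{i+k}$ meeting $B(x_B,r)$. Let $\rho:\mathcal{U}_{k'}\to\R_+$ be an optimal admissible function for $N_{p,k'}(\delta)$. I would define
$$\sigma(A):=\max\{\rho(A'):A'\in\mathcal{U}_{k'},\ A'\cap\phi(A)\neq\emptyset\}$$
for $A$ meeting $B(x_B,r)$, and $\sigma(A):=0$ otherwise. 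The key observation is that for every $A'\in\mathcal{U}_{k'}$ meeting $\phi(\gamma')$, choosing $y\in A'\cap\phi(\gamma')$ and $A(A')\in\mathcal{U}_{i+k}$ containing $\phi^{-1}(y)$ gives an element meeting $\gamma'$ with $\sigma(A(A'))\geq\rho(A')$. By the bi-Lipschitz bound on $\phi$, both the fibres of the map $A'\mapsto A(A')$ and the multiplicity $\#\{A:A'\cap\phi(A)\neq\emptyset\}$ are bounded by doubling constants $N_1,N_2$ depending only on $L_0,\kappa,\lambda$ and the doubling constant of $X$. A double-counting then yields
$$1\leq\sum_{A'\cap\phi(\gamma')\neq\emptyset}\rho(A')\leq N_1\sum_{A\cap\gamma\neq\emptyset}\sigma(A),\qquad \mathrm{Vol}_p(\sigma)\leq N_2\cdot N_{p,k'}(\delta),$$
so that $N_1\sigma$ is $\Gamma(B)$-admissible with $p$-volume bounded by $N_1^pN_2\cdot N_{p,k'}(\delta)$. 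Taking the supremum over $B\in\mathcal{U}$ gives $M^X_{p,k}(L)\lesssim N_{p,k'}(\delta)$. For $p>Q_D(\delta)$ the right-hand side tends to zero as $k\to\infty$ by \eqref{modptit2}, so $M^X_p=0$ and $p\geq Q_X$, proving $Q_X\leq Q_D(\delta)$.

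The principal technical obstacle is the calibration of $r$ and $k'$: one must arrange that the correspondence between $\mathcal{U}_{i+k}|_{B(x_B,r)}$ and $\mathcal{U}_{k'}|_U$ induced by $\phi$ has bounded multiplicity in both directions, uniformly in $B$ and $k$, since this is exactly what makes $\delta_0$, $N_1$, $N_2$ and the shift $k'-k$ depend only on $L_0$ (and the already-fixed geometric data). Once this calibration is in place, admissibility transfers through the covering property of the pushed-forward subcurve $\phi(\gamma')\in\Gamma_\delta$, and the $p$-volume bound is routine.
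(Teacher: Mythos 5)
Your proof is correct and follows essentially the same route as the paper: apply the self-similarity map $\phi$ at scale comparable to $L\cdot B$, push forward an optimal admissible weight for $N_{p,\cdot}(\delta)$ to a weight on $\mathcal{U}_{|B|+k}$, verify admissibility for $\Gamma(B)$ by transporting the curve through $\phi$, and control both multiplicity and $p$-volume by doubling with constants depending only on $L_0$, $\kappa$, $L$ and $K_D$. The only cosmetic differences are that the paper works directly with $\mathcal{U}_k$ and absorbs the rescaling into the comparison constants rather than introducing a shifted level $k'$, and the paper's $\delta_0=1/(6L_0)$ is obtained by bounding $(L-\ell)/((L+1)L_0)$ from below; your shift formula for $k'$ is not quite the one that makes $\diam\phi(A)\asymp b^{-k'}$, but since any bounded shift works equally well, this has no bearing on the validity of the argument.
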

\begin{proof}
We use in the proof various ingredients taken from \cite{BK} Section 3. Take $0<\delta\leq\frac{1}{6L_0}$, and let $p>0$ so that $N_{p,k}\left(\delta\right)\to 0$ when $k\to+\infty$. Let $k\geq 1$, and let $\rho:\mathcal{U}_k\to \R_+$ be a $\Gamma_\delta$-admissible optimal function, i.e. so that $\mathrm{Vol}_p(\rho)=\Mod_p\left(\Gamma_\delta,\mathcal{U}_k\right)=N_{p,k}\left(\delta\right)$.

Let $L\geq 2$. For $B\in\mathcal{U}$, consider $\phi:\left(L+1\right)\cdot B\to U$ the map given by the definition of self-similarity of $X$. We denote $i=|B|$ and
$$V_B=\left\{A\in \mathcal{U}_{i+k}:A\subset \left(L+1\right)\cdot B\right\}.$$
The set $A':=\phi\left(A\right)$ is defined for any $A$ in $V_B$. Since the map $\phi$ is a $L_0$-bi-Lipschitz homeomorphism, from $\left(L+1\right)\cdot B$ with the rescaled distance $(L+1)^{-1}\kappa^{-1} b^id$, into $U$, for any element $A$ of $V_B$, we have:
$$B\left(\phi(x_A),\frac{1}{\left(L+1\right)\kappa^2L_0}b^{-k}\right)\subset \phi\left(B\left(x_A,\kappa^{-1}b^{-(i+k)}\right)\right)\subset A'\subset B\left(\phi(x_A),\frac{L_0}{L+1}b^{-k}\right).$$
Set $\kappa'=\left(L+1\right)\kappa^2L_0$, since the balls $\left\{B\left(\phi(x_A),\kappa'^{-1}\right)\right\}_{A\in V_B}$ are pairwise disjoint, there exists a constant $K\geq 1$, which only depends on $\kappa'$ and the doubling constant of $X$, such that:
\begin{equation}\label{k1}
\forall C\in \mathcal{U}_k:\ \ \#\left\{A \in V_B:A'\cap C\neq\emptyset\right\}\leq K.
\end{equation}
Define $\sigma:\mathcal{U}_{i+k}\to\R_+$ by
$$\sigma\left(A\right)=\begin{cases}
\max\left\{\rho\left(C\right):C\cap A'\neq\emptyset\right\} & \text{ if } A\in V_B.\\
0 & \text{ otherwise.}
\end{cases}$$
Then
\begin{align*}
\sum\limits_{A\in \mathcal{U}_{i+k}}\sigma\left(A\right)^p=&\sum\limits_{A\in V_B}\sigma\left(A\right)^p\leq \sum\limits_{A\in V_B}\sum\limits_{C\cap A'\neq\emptyset}\rho\left(C\right)^p\\
\leq& K\sum\limits_{C\in \mathcal{U}_k}\rho\left(C\right)^p= K\cdot \Mod_p\left(\Gamma_\delta,\mathcal{U}_k\right).
\end{align*}

We recall that $\ell=1+b^{-1}$ comes from Definition \ref{modcombx}. Let $\gamma\subset X$ be a curve that $\gamma\cap \ell\cdot B\neq\emptyset$ and $\gamma\cap X\setminus L\cdot B\neq\emptyset$. We can suppose $\gamma$ to be contained in $\overline{L\cdot B}$. Since the diameter of $\gamma$ is bounded from below by $\left(L-\ell\right)\kappa b^{-i}$, the diameter of $\phi\left(\gamma\right)$ is bounded from below by $\frac{L-\ell}{\left(L+1\right)L_0}\geq\frac{1}{6L_0}\geq\delta$. Thus, $\phi\left(\gamma\right)$ is a curve in $\Gamma_\delta$ and
$$\sum\limits_{A\cap\phi\left(\gamma\right)\neq\emptyset}\rho(A)\geq 1.$$
An element $A$ belongs to $V_B$ if $A\cap \overline{L\cdot B}\neq\emptyset$. Then, for any element $C$ of $\mathcal{U}_k$ which intersects $\phi\left(\gamma\right)$, there exists an element $A$ of $V_B$ such that $C\cap A'\cap\phi\left(\gamma\right)\neq\emptyset$. Thus, for any element $C$ of $\mathcal{U}_k\left(\phi\left(\gamma\right)\right)$, there exists an element $A_C$ of $\mathcal{U}_{i+k}\left(\gamma\right)$ such that $\rho\left(C\right)\leq \sigma\left(A_C\right)$.

We can suppose $K$ big enough so that
$$\forall A\in \mathcal{U}_{i+k}\left(\gamma\right),\ \ \#\left\{C\in \mathcal{U}_k\left(\phi\left(\gamma\right)\right):A_C=A\right\}\leq K,$$
since this quantity only depends on the doubling constant of $X$.
Then 
$$1\leq \sum\limits_{C\in \mathcal{U}_k\left(\phi\left(\gamma\right)\right)}\rho\left(C\right)\leq \sum\limits_{C\in \mathcal{U}_k\left(\phi\left(\gamma\right)\right)}\sigma\left(A_C\right)\leq K\sum\limits_{A\in \mathcal{U}_{i+k}\left(\gamma\right)}\sigma\left(A\right).$$
This shows that $M^X_{p,k}\left(L\right)\leq K^{1+p}N_{p,k}\left(\delta\right)$, and therefore, $M^X_{p,k}\left(L\right)\to 0$ when $k\to \infty$. This completes the proof of proposition.
\end{proof}

So to estimate the Ahlfors regular conformal dimension of an approximately self-similar space, we just need to look at the modulus of curves of definite diameter.

\begin{corollary}\label{kk}
Let $X$ be an approximately self-similar space. If $X$ verifies items 1 and 2 of Theorem \ref{egalite}, then $\dim_{AR}X=Q_D\left(\delta\right)$ for all $0<\delta\leq \delta_0$. This is the case, in particular, when $X$ connected and locally connected.
\end{corollary}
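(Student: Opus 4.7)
The plan is to assemble the corollary from four ingredients already established in the paper, chained into a cycle of inequalities. Specifically, I would string together: (a) Theorem \ref{maintheo}, giving $\dim_{AR}X = Q_N$; (b) Theorem \ref{egalite}, which under hypotheses (1) and (2) upgrades this to $\dim_{AR}X = Q_N = Q_X$; (c) the preceding proposition, which for $0<\delta\le\delta_0$ yields $Q_X \le Q_D(\delta)$; and (d) the inequality $Q_D(\delta)\le \dim_{AR}X$ quoted from \cite{HP2} Corollary 3.3. Putting these together,
\begin{equation*}
\dim_{AR}X \;=\; Q_N \;=\; Q_X \;\le\; Q_D(\delta) \;\le\; \dim_{AR}X,
\end{equation*}
so equality holds throughout, proving $\dim_{AR}X = Q_D(\delta)$.

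For the ``in particular'' clause, I would verify that if $X$ is approximately self-similar, connected, and locally connected, then hypotheses (1) and (2) of Theorem \ref{egalite} are satisfied. Hypothesis (2) (uniform separation of components) is immediate: $X$ has a single connected component, so any covering by a single element works with any constant. Hypothesis (1) (uniform linear connectivity of components) amounts to showing that $X$ itself is $K_\ell$-linearly connected. This is precisely the statement, recalled at the beginning of this subsection and proved in \cite{CaPi} Chapter 2, that an approximately self-similar, connected, locally connected compact metric space is LC. Once this is in hand, Theorem \ref{egalite} applies and the main equality reduces to the chain above.

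The main obstacle is essentially bookkeeping rather than genuine mathematical content, since all deep inputs have been proved earlier. The only subtle point is to make sure the quantifiers on $\delta$ line up: the preceding proposition requires $0<\delta\le\delta_0$ where $\delta_0$ depends only on the self-similarity constant $L_0$, while \cite{HP2} Corollary 3.3 holds for every $\delta>0$; so the range of admissible $\delta$ in the final statement is exactly the one coming from the proposition, which matches the statement. I would state the proof as a short paragraph displaying the chain of (in)equalities and then a one-line verification of the ``in particular'' case.
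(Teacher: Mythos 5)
Your proof is correct and follows essentially the same route as the paper: both squeeze $Q_D(\delta)$ between $Q_X$ and $\dim_{AR}X$ via the inequalities $Q_X \le Q_D(\delta) \le \dim_{AR}X$ (from the preceding proposition and \cite{HP2}), and then close the loop using $Q_X = Q_N = \dim_{AR}X$ from Theorems \ref{egalite} and \ref{maintheo}. Your explicit verification of hypotheses (1) and (2) of Theorem \ref{egalite} in the connected, locally connected case (linear connectivity from the remark recalling \cite{CaPi}, and the trivial single-component covering for uniform separation) is also what the paper implicitly relies on.
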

\begin{proof}
Since $Q_X\leq Q_D\leq \dim_{AR}X$, it suffices to show that $Q_X=\dim_{AR}X$, but this is true from Theorem \ref{egalite}.
\end{proof}

We say that the diameter of the connected components of $X$ tends to zero, if for all $\delta>0$, there are only finitely many connected components of $X$ which have diameter greater than or equal to $\delta$. By convention, we define the AR conformal dimension of a point set to be zero. It's not clear in general whether the AR conformal dimension behaves well under countable unions, see for example Figure \ref{figsepcomp} (see also \cite{MT} for a discussion on this problem). The next corollary is a positive result in this direction.

\begin{corollary}\label{supcomposantesqas}
Let $X$ be approximately self-similar which verifies items 1 and 2 of Theorem \ref{egalite}. Suppose that the diameter of the connected components of $X$ tends to zero. Then
$$\dim_{AR}X=\sup\left\{\dim_{AR}Y:Y\text{ connected component of }X\right\}.$$
\end{corollary}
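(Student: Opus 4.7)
The plan is to pass to the critical-exponent reformulation provided by Theorem \ref{egalite} and then use the combinatorial modulus to decouple the computation component by component.

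Each connected component $Y$ of $X$ with $\diam Y>0$ trivially satisfies the hypotheses of Theorem \ref{egalite}: item (1) holds for $Y$ with the same constant $K_\ell$ as for $X$, and item (2) is vacuous since $Y$ is connected. Hence $\dim_{AR}Y=Q_X(Y)$, while Theorem \ref{egalite} applied to $X$ yields $\dim_{AR}X=Q_X(X)$. Singleton components contribute zero to the supremum by convention. Thus it suffices to prove $Q_X(X)=p^*:=\sup_{Y}Q_X(Y)$, where the supremum is taken over components of positive diameter.

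For the inequality $p^*\leq Q_X(X)$, I fix a component $Y$ and choose the coverings compatibly: start from a maximal $r_k$-separated subset of $Y$ and extend it to a maximal $r_k$-separated subset of $X$. Then every ball in the intrinsic covering of $Y$ is a ball of the corresponding covering of $X$, and the curve family $\Gamma_Y(B)$ is contained in $\Gamma_X(B)$. By the uniform separation condition (item 2 of Theorem \ref{egalite}), for $k$ sufficiently large no element of $\mathcal{U}_{|B|+k}$ centered outside $Y$ can meet a curve lying in $Y$. Consequently any $\Gamma_X(B)$-admissible weight on $\mathcal{U}_{|B|+k}$ restricts to a $\Gamma_Y(B)$-admissible weight on the $Y$-covering with no greater $p$-volume, giving $M_{p,k}^X(Y)\leq M_{p,k}^X(X)$ and hence $Q_X(Y)\leq Q_X(X)$.

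For the converse $Q_X(X)\leq p^*$, I use Corollary \ref{kk} in its large-scale form: $\dim_{AR}X=Q_D(\delta)$ for every $\delta\leq\delta_0$. Since the diameters of the components tend to zero, for each such $\delta$ only finitely many components $Y_1,\dots,Y_{N(\delta)}$ have diameter $\geq\delta$, and any curve of $X$ of diameter $\geq\delta$ lies in exactly one of them. Subadditivity of the combinatorial modulus therefore gives
\[N_{p,k}(\delta)\leq\sum_{i=1}^{N(\delta)}\Mod_p\bigl(\Gamma_\delta^{Y_i},\mathcal{U}_k\bigr).\]
Again using the uniform separation condition at scale $b^{-k}$, for $k$ large each summand is comparable, up to a constant depending only on $\lambda$, $\kappa$, $K_s$ and the doubling constant, to the intrinsic combinatorial modulus of curves of diameter $\geq\delta$ in $Y_i$ with respect to an intrinsic covering of $Y_i$. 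Applying Theorem \ref{maintheo} to each $Y_i$, whenever $p>\dim_{AR}Y_i$ this intrinsic modulus tends to zero with $k$. Hence if $p>p^*$, then every term vanishes in the limit (finitely many, uniformly in $k$), so $Q_D(\delta)\leq p$, which gives $\dim_{AR}X\leq p^*$ and completes the proof.

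The main obstacle is the compatible-covering bookkeeping in both directions together with the quantitative use of the uniform separation condition: one must verify, at each scale $b^{-k}$, that the elements of $\mathcal{U}_k$ meeting distinct components $Y_i$ and $Y_j$ are themselves disjoint, which requires $k$ to be large relative to the finite minimum separation among the components of diameter $\geq\delta$. This dependence is harmless since we let $k\to\infty$ after $\delta$ is fixed.
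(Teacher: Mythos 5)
The second direction of your argument ($Q_X(X)\leq p^*$) follows the same skeleton as the paper's proof (decompose $\Gamma_\delta$ over the finitely many components of diameter $\geq\delta$, then show each summand vanishes), but the first direction has a genuine gap, and the reference you invoke in the second direction is not quite the right one.

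For the inequality $\dim_{AR}Y\leq\dim_{AR}X$, your key claim is that ``for $k$ sufficiently large no element of $\mathcal{U}_{|B|+k}$ centered outside $Y$ can meet a curve lying in $Y$.'' This is false in general. The uniform separation condition at scale $r=b^{-(|B|+k)}$ produces a clopen set $W$ with $Y\subset W\subset V_r(Y)$ and $\mathrm{dist}(W,X\setminus W)\geq r/K_s$; but $W$ typically contains other connected components of $X$ lying within distance $r$ of $Y$ (for instance $X=\mathcal{C}\times[0,1]$ has infinitely many components accumulating on each component), so there are elements of $\mathcal{U}_{|B|+k}$ centered in those nearby components whose $\kappa b^{-(|B|+k)}$-balls meet a curve $\gamma\subset Y$. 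Dropping their weights from an $X$-admissible $\rho$ can therefore destroy admissibility for $\Gamma_Y(B)$, and the bound $M^X_{p,k}(Y)\leq M^X_{p,k}(X)$ does not follow by restriction. The paper avoids all this combinatorial bookkeeping: for $Y$ connected and nondegenerate one has $\dim_{AR}Y$ equal to the Assouad conformal dimension, which is visibly monotone under subsets and under restriction of quasisymmetries, giving the inequality directly.

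For the converse, your decomposition via finiteness of the components of diameter $\geq\delta$ and the subadditivity of $\Mod_p$ matches the paper's proof. However, Theorem \ref{maintheo} is not the correct tool to conclude that each summand tends to zero: it identifies $\dim_{AR}Y_i$ with $Q_N(Y_i)$, which is defined from the ball-based modulus $M_{p,k}$, not from the modulus of curves of diameter $\geq\delta$. Passing from one to the other requires $Y_i$ to be approximately self-similar (for Corollary \ref{kk}), which is not among the hypotheses. What the paper actually uses is that, since $p>\dim_{AR}Y_i$, there exists an Ahlfors regular metric of dimension $<p$ in the gauge of $Y_i$; Proposition B.2 of \cite{H1} (or the explicit construction from the proof of $Q_N(L)\leq\dim_{AR}X$ in Section \ref{proofmain}, applied to the curve family $\Gamma_\delta(Y_i)$) then shows $\Mod_p\left(\Gamma_\delta(Y_i),\mathcal{W}_k\right)\to 0$, and a routine covering comparison transfers this to $\mathcal{U}_k(Y_i)$. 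Finally, the paper reduces to $q\geq 1$ and treats $q<1$ as the degenerate uniformly disconnected case; you do not address this case, though it is minor.
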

\begin{proof}
We remark that $\dim_{AR}X\geq \dim_{AR}Y$ for any connected component $Y$ of $X$. If $Y$ is a point, the inequality trivially holds. Otherwise, $Y$ is doubling and uniformly perfect: therefore, its AR conformal dimension is equal to the Assouad conformal dimension (see \cite{MT}). For the Assouad conformal dimension the inequality is clear.

We show the other inequality. Set
$$q:=\sup\left\{\dim_{AR}Y:Y\text{ connected component of }X\right\},$$
and let $p>q$. We can suppose that $q\geq 1$, otherwise, any connected component is a singleton, and since $X$ verifies the uniform separation of components, it is uniformly disconnected. In that case $q=\dim_{AR}X=0$.

We know that there exists $\delta>0$ such that $\dim_{AR}X=Q_D(\delta)$. Consider the set $\mathcal{Y}$ of connected components of $X$ which have diameter bigger than or equal to $\delta$. By hypothesis, $\mathcal{Y}$ is finite, and we write
$$N_\delta:=\left|\mathcal{Y}\right|.$$
If $Y$ is a component of $X$, we denote by $\Gamma_\delta(Y)$ the curves of $\Gamma_\delta$ which are contained in $Y$. Therefore,
$$\Gamma_\delta=\bigcup\limits_{Y\in\mathcal{Y}}\Gamma_\delta(Y),$$
and consequently, for all $k\geq 1$, we have
$$\Mod_p\left(\Gamma_\delta,\mathcal{U}_k\right)\leq \sum\limits_{Y\in\mathcal{Y}}\Mod_p\left(\Gamma_\delta(Y),\mathcal{U}_k\right)\leq N_\delta\cdot\max\limits_{Y\in\mathcal{Y}}\left\{\Mod_p\left(\Gamma_\delta(Y),\mathcal{U}_k\right)\right\}.$$
If we denote by $\mathcal{U}_k(Y)$ the set of elements $B$ of $\mathcal{U}_k$ which intersect $Y$, we have the following equality
$$\Mod_p\left(\Gamma_\delta(Y),\mathcal{U}_k\right)=\Mod_p\left(\Gamma_{\delta}(Y),\mathcal{U}_k(Y)\right).$$
Fix now $Y\in\mathcal{Y}$. For each element $B$ of $\mathcal{U}_k(Y)$, consider a point $x'\in B\cap Y$ and let $B'=B\left(x',2\kappa b^{-k}\right)$; if the point $x_B$ already belongs to $Y$, we take $x'=x_B$.

Let $\mathcal{W}_k$ be the covering of $Y$ by these balls. From Proposition B.2 of \cite{H1}, the sequence of moduli $\Mod_p\left(\Gamma_\delta(Y),\mathcal{W}_k\right)$ tends to zero when $k$ tends to infinity. Note that
$$\Mod\left(\Gamma_\delta(Y),\mathcal{U}_k(Y)\right)\lesssim \Mod_p\left(\Gamma_\delta(Y),\mathcal{W}_k\right),$$
where the comparison constant depends only on the doubling constant of $X$. Since $\mathcal{Y}$ is finite, we obtain $\Mod_p\left(\Gamma_\delta,\mathcal{U}_k\right)\to 0$ when $k\to +\infty$. Therefore, $\dim_{AR}X\leq p$. This ends the proof of the corollary.
\end{proof}
\begin{remark}
The assumption of finiteness of connected components of definite diameter is necessary, as it's shown by the example of a Cantor set of segments $X:=\mathcal{C}\times [0,1]$, whose AR conformal dimension is equal $1+\dim_H\mathcal{C}=1+\log_3(2)> 1$, although the AR conformal dimension of the connected components of $X$ is equal to $1$, and that of $\mathcal{C}$ is equal to $0$.
\end{remark}

\end{document}